\numberwithin{equation}{section}
\date{}
\newtheorem{theorem}{Theorem}[section]
\newtheorem{problem}{Problem}[section]
\newtheorem{lemma}{Lemma}[section]
\newtheorem{corollary}{Corollary}[section]
\begin{document}
\begin{frontmatter} 
\title{Supersonic–sonic patch solution for the two-dimensional Euler equations with a van der Waals equation of state \tnoteref{mytitlenote}} 
\author[1]{Anamika Pandey}
\author[1]{T. Raja Sekhar}
\address[1]{Department of Mathematics, Indian Institute of Technology Kharagpur, Kharagpur, West Bengal, India} 
\cortext[mycorrespondingauthor]{Corresponding author} 
\ead{trajasekhar@maths.iitkgp.ac.in} 

\begin{abstract}
We investigate supersonic transonic phenomena in the two-dimensional compressible Euler equations governed by a polytropic van der Waals equation of state. In contrast to the ideal gas setting, the non-ideal pressure law introduces stronger nonlinear effects and modifies the degeneracy structure near sonic states, which significantly complicates the analytical treatment of transonic flows. Within the self-similar framework associated with the four-state Riemann problem, we construct a supersonic sonic patch solution that connects a strictly supersonic region to a sonic boundary along a pseudo streamline. The analysis is based on a characteristic decomposition combined with a partial hodograph transformation, through which the problem is reformulated as a degenerate hyperbolic system. We establish the existence of a globally defined supersonic solution and prove its uniform regularity up to the sonic curve. In addition, we investigate the regularity properties of the resulting sonic boundary. Our results extend the theory of supersonic sonic patches from polytropic gases to a realistic non-ideal gas model.
\end{abstract}

\begin{keyword}
   2-D Riemann problem; Compressible Euler equations; Van der Waals gas; Sonic curve; Supersonic-sonic patch.
  \MSC[] 35Q35; 35L45; 35L65; 35L67; 35L80
\end{keyword}
\end{frontmatter}
\section{Introduction}

The mathematical theory of compressible fluid dynamics has traditionally relied on idealized constitutive laws, most notably the polytropic gas assumption, to investigate fundamental flow phenomena. While such models have led to profound analytical insights, they fail to capture several essential physical effects present in realistic gases, including intermolecular attraction and finite molecular volume. These effects become particularly pronounced in regimes involving high compression and strong nonlinear interactions such as those occurring near sonic transitions. A more realistic description is provided by the van der Waals equation of state, which modifies the pressure-density relation and consequently alters the characteristic structure of the governing equations. From both physical and mathematical perspectives, it is therefore natural to ask whether the existing analytical theory of transonic flows can be extended beyond the ideal gas framework.

In non-ideal gas models, the presence of additional nonlinear terms in the pressure law leads to significant changes in the behavior of the sound speed and characteristic directions. Near sonic states, these modifications strengthen the degeneracy of the governing equations and complicate the analysis of regularity and stability. Many techniques that are effective for polytropic gases rely delicately on the specific algebraic structure of the pressure function and cannot be transferred directly to van der Waals gases. As a result, the investigation of transonic phenomena in non-ideal gases presents new analytical challenges that have not yet been fully explored.

Transonic flows and the associated mixed-type partial differential equations occupy a central position in the study of compressible Euler systems. A key feature of such flows is the presence of sonic curves, across which the governing equations change type from strictly hyperbolic to elliptic or degenerate hyperbolic. The loss of strict hyperbolicity on these curves prevents the direct application of standard methods for hyperbolic systems and makes the analysis of solutions near sonic boundaries particularly delicate. Understanding the behavior of flows in the neighborhood of sonic curves is not only of theoretical interest but is also crucial in practical applications such as nozzle design, high-speed aerodynamics and propulsion systems, where smooth transitions are essential to avoid shock-induced instabilities and performance degradation.

A natural and effective framework for studying transonic phenomena is provided by self-similar solutions arising from multidimensional Riemann problems. These problems describe the evolution of piecewise constant initial data and serve as fundamental building blocks for more complex flow patterns. In two space dimensions, the four-state Riemann problem for the compressible Euler equations has attracted extensive attention as a prototype for understanding nonlinear wave interactions. The seminal work of Zhang and Zheng \cite{conjecture1990} proposed several possible global configurations of solutions, which were subsequently confirmed and refined through numerical investigations \cite{pdlax, Schulz1993, Glimm2008, Alexander2017}. These studies revealed that the resulting flow patterns may involve a rich combination of shocks, rarefaction waves and contact discontinuities. Remarkably, the solution structure of the 2-D Riemann problem incorporates several fundamental flow phenomena, including shock reflections and dam-break type motions \cite{Jli2009}. Further analytical and numerical investigations have shown that, except for the vacuum case near the origin \cite{Jli2010}, the global solutions of the two-dimensional Riemann problem inevitably contain transonic features and fine-scale structures \cite{Jli1998, yuxizhang2001}, which render rigorous mathematical treatment extremely challenging. In particular, numerical simulations reported in \cite{Glimm2008} indicate that shock formation may occur in the vicinity of sonic curves even when the initial data are purely rarefactive. This observation highlights the subtle and intricate nature of supersonic flows near sonic boundaries and underscores the need for a rigorous analytical understanding of such regions.

Related transonic phenomena also arise in the study of gas expansion into a vacuum and pseudo-steady supersonic flow past solid boundaries. The expansion of a semi-infinite wedge of gas into a vacuum, commonly referred to as the dam break problem in hydraulics, has been investigated using characteristic decompositions and hodograph transformations \cite{simplewave2006, gasexpansion2001, interaction2009, chdecomposition, LaiGeng2019}. Similarly, pseudo-steady supersonic flows past sharp corners have been studied in \cite{wsheng2018, glai2021, Rahul2}. In many of these problems, however, the presence of vacuum state or geometric singularities limit the applicability of the analysis near sonic curves, leaving several fundamental questions unresolved. Beyond these examples, many related efforts have been devoted to the 2-D compressible Euler system as well as numerous other related models for a variety of initial and boundary value problems; see, for instance, \cite{Rahul3, cshen, Pandey2, Pandey1}.

To capture the local behavior of supersonic solutions near sonic boundaries, Song and Zheng \cite{songzheng2009} introduced the notion of semi-hyperbolic waves in the context of the pressure gradient system. A semi-hyperbolic patch is a local solution in which one family of characteristic curves originates from a sonic curve and terminates at either another sonic curve or a transonic shock. Such patches play the role of transition layers between strictly hyperbolic regions and degenerate sonic boundaries. Semi-hyperbolic patch solutions arise naturally in a variety of physical settings, including transonic flow past airfoils \cite{Courant1948} and Guderley-type shock reflections associated with the von Neumann triple point paradox \cite{Tesdall2006, Tesdall2008}. The existence and regularity of semi-hyperbolic patches have been established for the two-dimensional compressible Euler system and several related systems in a series of works \cite{LMingjie2011, yanbohu2014, lgeng2015, Yongqiang2023}. Uniform regularity up to the sonic boundary has also been studied in \cite{yanbohu2018, chenj2020, CJianjun2025, Rahul4}. Recently, Hu et al. \cite{hu2024supersonic} considered a formulation in which data are prescribed only along a pseudo streamline and constructed a supersonic sonic patch solution. These developments suggest that such patch structures may play an important role in the eventual construction of global solutions to mixed-type systems.

Despite this progress, most existing results on semi-hyperbolic and supersonic sonic patches are developed under the assumption of ideal or polytropic gases. In these settings, the boundary data are typically prescribed along characteristic curves, allowing the use of level curves of the pseudo Mach number $M_a$ as effective Cauchy supports up to the sonic boundary. For non-ideal gases, however, the modified pressure law leads to changes in the characteristic geometry and degeneracy structure, making it unclear whether these techniques remain applicable. In particular, the stronger nonlinear effects introduced by the van der Waals equation of state require a careful reexamination of the analytical framework.

In this article, we focus on the construction of a supersonic sonic patch solution along a pseudo streamline for the two-dimensional compressible Euler equations with a van der Waals equation of state. The patch extends from a strictly supersonic region to a sonic curve in the self-similar plane. Unlike the semi hyperbolic case, the Cauchy supports cannot be chosen freely up to the sonic boundary, which necessitates new ideas to establish existence and uniform regularity. Our results extend the existing theory for polytropic gases to a more realistic non ideal gas model.

The two-dimensional compressible Euler equations under consideration are given by \cite{Courant1948}
\begin{equation}\label{int_1}
\begin{split}
    &\frac{\partial \rho}{\partial t}+\frac{\partial(\rho u)}{\partial x}+\frac{\partial (\rho v)}{\partial y}=0,\\
    &\frac{\partial (\rho u)}{\partial t}+\frac{\partial(\rho u^2+p) }{\partial x}+\frac{\partial (\rho uv)}{\partial y}=0,\\
    &\frac{\partial (\rho v)}{\partial t}+\frac{\partial(\rho uv) }{\partial x}+\frac{\partial (\rho v^2+p)}{\partial y}=0, \; (t,x,y) \in \mathbb{R}^+ \times \mathbb{R}^2,
\end{split}
\end{equation}
where $\rho$ denotes the density, $u$ and $v$ represent the velocity components in the $x$ and $y$ directions, respectively and $p$ is the pressure.

Further, we consider a polytropic van der Waals gas with the equation of state \cite{callen1998thermodynamics, Mzafar2025}
\begin{equation}\label{int_2}
p(\tau)=\frac{K}{(\tau-b)^{\gamma+1}}-\frac{a}{\tau^2},
\end{equation}
where $\tau=1/\rho$ denotes the specific volume, $\gamma\in(0,1)$ is a constant and $K>0$ depends on the entropy of the system. The constants $a>0$ and $b>0$ represent the intermolecular attraction and covolume effects, respectively. The case $a=0$ corresponds to a dusty gas while $a=0$ and $b=0$ reduce the model to the classical polytropic ideal gas.

The main analytical strategy of this article is based on a characteristic decomposition combined with a partial hodograph transformation. Through this transformation, the original problem is reformulated as a degenerate hyperbolic system in which existence and uniform regularity can be established up to the degenerate line. By exploiting the global invertibility of the transformation, we then construct a corresponding solution to the original formulation and show that it remains uniformly regular up to the sonic curve in the self-similar plane.

The remainder of this article is organized as follows. In Section 2, we introduce the characteristic variables and formulate the problem precisely, together with the statement of the main result. The partial hodograph transformation and the analysis of the resulting degenerate hyperbolic system, including the proof of existence and uniform regularity up to the degenerate line, are carried out in Section 3. In Section 4, we recover the solution in the original variables and establish its uniform regularity up to the sonic curve. Concluding remarks and future work are presented in Section 5.


\section{Preliminary analysis and problem formation}


\subsection{Euler system in self-similar coordinates}

The compressible Euler system \eqref{int_1} admits a natural scaling invariance with respect to the transformation $(x,y,t)\mapsto(\alpha_1 x,\alpha_1 y,\alpha_1 t)$ for any $\alpha_1>0$. This observation motivates the introduction of self-similar variables $(\xi,\eta)=(x/t,y/t)$, under which solution depends only on the similarity coordinates. In these variables, system \eqref{int_1} is rewritten as
\begin{equation}\label{Pre1_1}
\begin{split}
   &-\xi\rho_\xi+(\rho u)_\xi-\eta\rho_\eta+(\rho v)_\eta=0,\\
   &-\xi(\rho u)_\xi+(\rho u^2+p(\rho))_\xi-\eta(\rho u)_\eta +(\rho u v)_\eta=0,\\
   &-\xi(\rho v)_\xi+(\rho uv)_\xi-\eta(\rho v)_\eta+(\rho v^2+p(\rho))_\eta=0.
\end{split}
\end{equation}
Assuming that the flow is irrotational, i.e., $v_\xi=u_\eta$, we can rewrite system (\ref{Pre1_1}) as
\begin{equation}\label{Pre1_2}
    \begin{cases}
        (c^2-U^2)u_\xi-UV(u_\eta+v_\xi)+(c^2-V^2)v_\eta=0,\\
        u_\eta-v_\xi=0,
    \end{cases}
\end{equation}
where $(U,V)=(u-\xi,v-\eta)$ denote the pseudo-flow velocity in self-similar plane and $c^2=-\tau^2p'(\tau)$ denotes the speed of sound such that
\begin{equation}\label{int_3}
    p'(\tau)=\frac{-K(\gamma+1)}{(\tau-b)^{\gamma+2}}+\frac{2a}{\tau^3}, \;\;p''(\tau)=\frac{K(\gamma+1)(\gamma+2)}{(\tau-b)^{\gamma+3}}-\frac{6a}{\tau^4}.
\end{equation}
From the expressions of $p'(\tau)$ and $p''(\tau)$, it follows that there exists sufficiently large $\tau_1>b$ such that for all $\tau>\tau_1$,
$p'(\tau)<0 \quad \text{and} \quad p''(\tau)>0$.

Introducing the pseudo-velocity potential $\phi$, we obtain the pseudo-Bernoulli's law
\begin{equation}\label{Pre1_3}
    \frac{U^2+V^2}{2}+\frac{K}{(\tau-b)^{\gamma}}\left( \frac{\gamma+1}{\gamma}+\frac{b}{\tau-b}\right)-\frac{2a}{\tau} = -\phi,
\end{equation}
where the additive constant has been normalized to zero for convenience. The system (\ref{Pre1_2}) can be written in the matrix form as follows
\begin{equation}\label{Pre1_4}
   \begin{bmatrix}
       u \\ v
   \end{bmatrix}_\xi+ \begin{bmatrix}
       \frac{-2UV}{c^2-U^2} & \frac{c^2-V^2}{c^2-U^2}\\ -1 & 0
   \end{bmatrix} \begin{bmatrix}
       u \\ v
   \end{bmatrix}_\eta =0. 
\end{equation}
The two eigen values of (\ref{Pre1_4}) are $\lambda_\pm = \frac{UV\pm c\sqrt{U^2+V^2-c^2}}{U^2-c^2}$ with corresponding left eigenvectors $l_{\pm}=(1,\lambda_\pm)$. This implies that system (\ref{Pre1_2}) is of mixed type: it is supersonic when $U^2+V^2>c^2$, subsonic when $U^2+V^2<c^2$ and sonic when $U^2+V^2=c^2$. Let $M_a=\frac{\sqrt{U^2+V^2}}{c}$ denotes the pseudo Mach number. The set $\{(\xi,\eta) \;|\;M_a(\xi,\eta)=1 \}$ is referred to as the sonic curve. 
A curve whose tangent direction is parallel to the pseudo velocity vector $(U,V)$ is referred to as a pseudo streamline. 

Multiplying system (\ref{Pre1_4}) by $l_{\pm}$ we obtain the characteristic equations as
\begin{equation}\label{Pre1_5}
    \partial^\pm u+\lambda_\mp\partial^\pm v=0; \hspace{10mm}\partial^\pm=\partial_\xi+\lambda_{\pm} \partial_\eta.
\end{equation}
Now we define the concept of characteristic angles as in \cite{chdecomposition}. It is easy to see that the eigenvalues of the system (\ref{Pre1_4}) satisfy the relations
\begin{equation*}\label{Pre1_6}
    \tan \alpha=\lambda_+, \hspace{10mm} \tan \beta=\lambda_-,
\end{equation*}
where $\alpha$ and $\beta$ are the inclination angles corresponding to the positive and negative characteristic directions. Further we denote pseudo-Mach angle by $\omega$, which is the angle between positive (negative) characteristic and pseudo-velocity vector $(U, V)$ and pseudo-flow angle by $\theta$, which is the angle between pseudo-velocity vector and $\xi$-axis such that $\theta=\frac{\alpha+\beta}{2}, \omega=\frac{\alpha-\beta}{2}$, with the corresponding relations
\begin{equation}\label{Pre1_7}
    \tan \theta =\frac{V}{U}, \hspace{5mm} \sin \omega = \frac{c}{\sqrt{U^2+V^2}}.
\end{equation}
For convenience, we denote $\sin\omega=\bar\omega$ throughout the article.

Using \eqref{Pre1_7} together with the pseudo Bernoulli relation \eqref{Pre1_3}, the velocity components and sound speed are formulated in terms of $(\theta,\omega)$ as \cite{MR3342411}
\begin{equation}\label{Pre1_8}
    u=\xi+\frac{c \cos \theta}{\bar\omega}, \; v=\eta+\frac{c \sin \theta}{\bar\omega} \; \text{and}\; c(\tau) = \left[2\bar\omega^2\left(\frac{2a}{\tau}-\phi-\frac{K(\tau\gamma+(\tau-b))}{\gamma(\tau-b)^{\gamma+1}} \right) \right]^{1/2}.
\end{equation}
Further, the sonic curve $\{(\xi,\eta): M_a(\xi,\eta)=1\}$ is represented as $\{(\xi,\eta): \bar\omega(\xi,\eta)=1\}$. 

Moreover, we give the following notations, which are very important in the discussion of the article:
\begin{equation*}\label{Pre1_9}
    \begin{cases}
       & \kappa(\tau)=-\frac{2p'(\tau)}{2p'(\tau)+\tau p''(\tau)}=\frac{2-\frac{2b}{\tau}-\frac{4a(\tau-b)^{\gamma+3}}{K(\gamma+1)\tau^4}}{\gamma+\frac{2b}{\tau}+\frac{2a(\tau-b)^{\gamma+3}}{K(\gamma+1)\tau^4}},\\
    &\mu^2(\tau)=\frac{1}{1+\kappa(\tau)}. 
    \end{cases}
\end{equation*}
From the above relations, it follows that when $\tau$ is sufficiently large, say $\tau_1>b$, when $\tau>\tau_1$ we have $p'(\tau)<0, p''(\tau)>0$ and $\kappa(\tau)>0$. 
Moreover, for technical convenience, we assume without loss of generality that $\kappa'(\tau)>0$ \cite{Rahul4, Mzafar2023}.

\subsection{Problem formulation and statement of the main theorem}

In this subsection, we formulate the local supersonic–sonic patch problem in the $(\xi, \eta)$-plane and state the main theorem of the article.
\begin{problem}\label{P1}
Let $\widehat{LM}$ be a smooth segment of a curve in the self-similar $(\xi,\eta)$-plane, along which the flow is strictly supersonic. We assume that $\widehat{LM}$ is a pseudo streamline and that one endpoint $M$ of this curve corresponds to a sonic state. The problem is to determine a smooth sonic curve emanating from the point $M$ and to construct a regular solution of the pseudo-steady Euler equations in the region bounded by the pseudo streamline $\widehat{LM}$ and the resulting sonic curve, in a neighborhood of $M$. A schematic illustration of the configuration is shown in Figure \ref{fig1}.
\end{problem}
The formulation of Problem \ref{P1} is motivated by the structure of self-similar solutions to the 2-D four-state Riemann problem for the compressible Euler equations, originally proposed by Zhang and Zheng \cite{conjecture1990}. In such problems, supersonic flows are connected to transonic or subsonic regions through pseudo streamlines and local supersonic–sonic patches arise as fundamental building blocks of the global solution. The present work focuses on the rigorous construction and regularity analysis of such a patch in the setting of a van der Waals gas.

The main result of this work can be stated as follows:
\begin{theorem}\label{int_th}
Let $\widehat{LM}$ be a smooth pseudo streamline given by $\eta=\psi(\xi)$ for $\xi\in[\xi_1,\xi_2]$, where $\psi$ is strictly decreasing and concave. Assume that the pseudo Mach number $M_a$ is strictly decreasing along $\widehat{LM}$ and satisfies $M_a=1$ at the endpoint $M=(\xi_2,\psi(\xi_2))$. Suppose further that $\psi'$ and $M_a$ are $C^2$ functions and are compatible with the pseudo Bernoulli's relation. Then there exists a sufficiently small smooth sonic curve $\widehat{MN}$ issuing from $M$ such that the pseudo-steady Euler system \eqref{Pre1_1} admits a supersonic–sonic patch solution $(\tau,u,v)(\xi,\eta)$ in a neighborhood of $M$, bounded by $\widehat{LM}$ and $\widehat{MN}$. Moreover, the sonic curve $\widehat{MN}$ is $C^{1,\mu}$ continuous and the solution $(\tau,u,v)$ is uniformly $C^{1,\mu}$ up to the sonic curve for any $\mu\in(0,1/3)$.
\end{theorem}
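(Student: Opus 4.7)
The plan is to reduce the problem to a degenerate hyperbolic system in partial hodograph coordinates, solve it by an iteration scheme, and then transport the solution back to the physical plane.

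First, starting from \eqref{Pre1_2}, I would derive a characteristic decomposition for the angular variables $(\theta,\omega)$ by differentiating \eqref{Pre1_5} and using \eqref{Pre1_7}--\eqref{Pre1_8}. The goal is to obtain a closed system of the schematic form $\partial^{+}\theta = f_{1},\ \partial^{-}\theta = f_{2},\ \partial^{+}\bar\omega = g_{1},\ \partial^{-}\bar\omega = g_{2}$ in which the right-hand sides are smooth away from $\bar\omega=1$ and encode the van der Waals law through the functions $\kappa(\tau)$ and $\mu(\tau)$ introduced at the end of Section 2. The hypothesis $\kappa'(\tau)>0$ for $\tau>\tau_{1}$ will be essential for fixing the correct sign structure in these source terms. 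The compatibility of $\psi'$ and $M_a$ with the pseudo Bernoulli's relation \eqref{Pre1_3} provides the boundary values of $(\theta,\omega,c)$ along $\widehat{LM}$.

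Second, I would introduce a partial hodograph transformation. Parametrising $\widehat{LM}$ by the $\xi$-coordinate and issuing the negative characteristic $C^{-}$ into the patch from each base point, I would take $(\xi_{0},\bar\omega)$ as new independent variables, where $\xi_{0}$ labels the foot of the $C^{-}$ on $\widehat{LM}$ and $\bar\omega$ measures depth toward the sonic boundary. The sonic curve $\widehat{MN}$ then coincides with the fixed coordinate line $\{\bar\omega=1\}$, removing the unknown free boundary from the formulation. Global invertibility of this map requires $\partial^{-}\bar\omega\neq 0$ throughout the patch; the strict decrease of $M_a$ along $\widehat{LM}$ and the concavity of $\psi$ are precisely what propagate into such a bound through the characteristic ODEs obtained in the first step.

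Third, in the $(\xi_{0},\bar\omega)$ variables the reformulated system is a first-order hyperbolic system whose characteristic speeds degenerate as $(1-\bar\omega)^{1/2}$ as $\bar\omega\uparrow 1$. Existence would be obtained by a Picard iteration on the rectangle $[\xi_{1},\xi_{2}]\times[\bar\omega_{*},1]$ in a suitably weighted Hölder space designed so that the square-root degeneracy can be absorbed while the quadratic source terms remain controlled. The uniform $C^{1,\mu}$ estimate up to $\bar\omega=1$ is then obtained by integrating the characteristic equations along each family and balancing the degenerate factor against the nonlinearity, which is the origin of the restriction $\mu<1/3$.

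Finally, I would invert the partial hodograph map by the implicit function theorem on the open patch and continuously extend it up to $\bar\omega=1$ using the uniform bounds from the previous step. The physical solution $(\tau,u,v)$ is recovered from $(\theta,\omega)$ via \eqref{Pre1_8} together with \eqref{Pre1_3}, and the sonic curve $\widehat{MN}$ is identified as the image of $\{\bar\omega=1\}$, inheriting the $C^{1,\mu}$ regularity. The principal obstacle is the third step: unlike the semi-hyperbolic patch situation, where the Cauchy support is a characteristic and can be propagated directly to the sonic boundary, the initial data here live on a pseudo streamline, so the uniform regularity estimate has to be closed using the streamline geometry together with van der Waals-specific bounds on $\kappa(\tau)$ and $\mu(\tau)$, which are no longer simple algebraic functions of $\omega$ as in the polytropic ideal gas case.
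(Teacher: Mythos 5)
Your outline matches the paper's overall architecture — characteristic decomposition in the angle variables, a partial hodograph map that freezes the sonic boundary as a coordinate line, a degenerate hyperbolic system solved in the hodograph plane, and an inversion back to $(\xi,\eta)$ — but it diverges in the two places where the real work happens. First, the coordinates: the paper takes $(z,t)=(\phi-\hat\phi(\xi_2),\cos\omega)$ with $\phi$ the pseudo-velocity potential, not $(\xi_0,\bar\omega)$ with $\xi_0$ a characteristic label. This is not cosmetic. Global injectivity of the paper's map is a one-line monotonicity argument ($\phi$ is strictly monotone along each level set of $\bar\omega$, cf.\ \eqref{Inv1_8}, because the Jacobian $J=-c(X-Y)(1+\kappa(\tau)(1-t^2))/(2t)$ has a sign), whereas injectivity of your map requires showing that the $C^-$ characteristics foliate the patch without crossing and all reach $\widehat{LM}$ — which presupposes most of the solution you are trying to build. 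Second, the existence mechanism: the paper does \emph{not} iterate across the degenerate line. It solves away from $t=0$ by standard quasilinear theory plus a continuation argument, powered by an invariant-region $C^0$ bound $\tfrac12\hat m_0\le\overline X,\overline Y\le 2\hat M_0$ (Lemma \ref{Sol_3_l1}) and a blow-up-rate $C^1$ bound $\|(X,Y)\|_{C^1(\Omega_\epsilon)}\le\widetilde K/\epsilon^3$ (Lemma \ref{Sol_3_l2}), and only then proves weighted gradient and Hölder estimates up to $t=0$ as a priori estimates on the already-constructed solution. Your Picard iteration in a weighted Hölder space on the closed degenerate rectangle is a genuinely different (Zhang--Zheng style) machine; it can be made to work for sonic--supersonic problems, but every one of the above bounds would have to be reproved uniformly along the iteration together with contraction in a degenerate norm, which is considerably heavier than what the paper does.

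Two concrete gaps. (i) You never identify the unknowns of the degenerate system. The paper's entire analysis runs on $X=\bar\partial^+c/c$, $Y=\bar\partial^-c/c$ and $W=(X+Y)/(2\cos\omega)$: the strict positivity of their boundary data near $M$ (which follows from $\psi''<0$, $\hat{\bar\omega}'>0$) is what gives the invariant region, and the facts that $X=Y$ on the sonic line with $X-Y=O(t)$ (Corollary \ref{Sol_3_c1}, Lemma \ref{Sol_3_l4}) are exactly what keep the Jacobian $J$ bounded and nonzero so the inversion closes. A scheme posed directly for $(\theta,\bar\omega)$ does not expose this structure. (ii) Your accounting for $\mu<1/3$ is incomplete. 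In the paper the exponent comes from \emph{two} combined effects: the cubic separation $|z_1-z_2|\sim t_m^3$ of characteristics meeting on the degenerate line (since $\Lambda_\pm\sim t^2$), which yields $C^{(2-\nu)/3}$ regularity of $W$ in the hodograph plane; and the halving of Hölder exponents under the inverse map, because $|t^*-t^{**}|^2\lesssim|\bar\omega(\xi^*,\eta^*)-\bar\omega(\xi^{**},\eta^{**})|$, so $C^{2\bar\mu}$ in $(z,t)$ only gives $C^{\bar\mu}$ in $(\xi,\eta)$. The binding constraint is $W$: $(2-\nu)/6\uparrow 1/3$. Locating the restriction entirely in the hodograph-plane estimate, as you do, would lead you to claim more regularity in the physical plane than is actually available.
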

\begin{figure}
    \centering
    \includegraphics[width=0.8\linewidth]{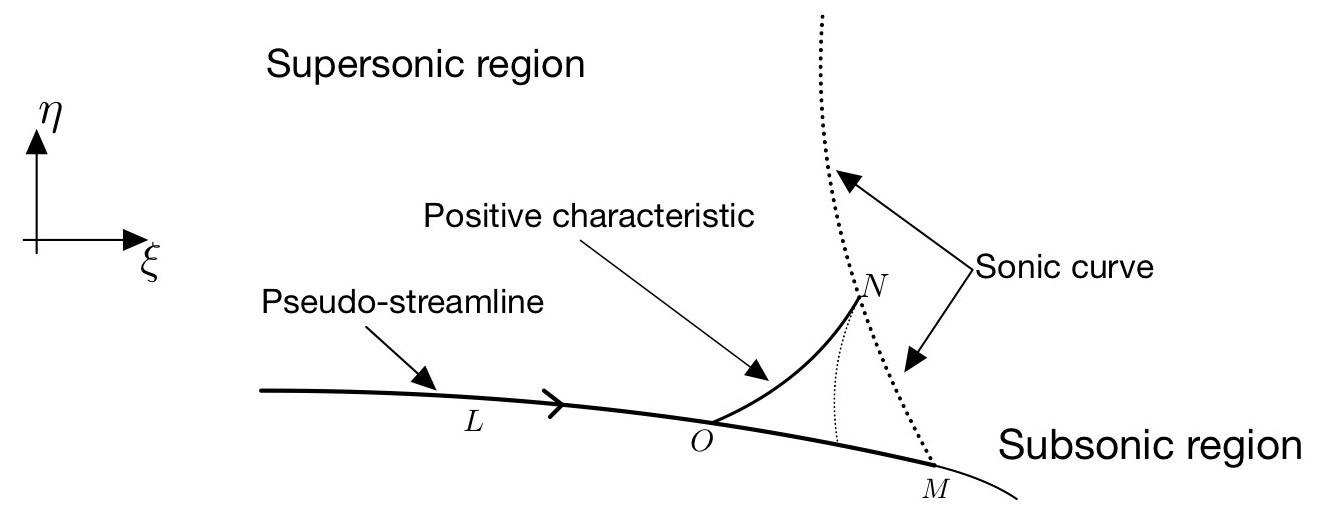}
    \caption{A supersonic-sonic patch in the self-similar plane}
    \label{fig1}
\end{figure}

\subsection{Characteristic decompositions in terms of angle variables}
Define the first-order normalized directional derivatives $\bar{\partial}^\pm$ defined as
\begin{equation}\label{Pre2_1}
    \bar{\partial}^+ = \cos\alpha \partial_\xi+\sin\alpha \partial_\eta, \bar{\partial}^- = \cos\beta \partial_\xi+\sin\beta \partial_\eta, \bar{\partial}^0 = \cos\theta \partial_\xi+\sin\theta \partial_\eta.
\end{equation}
Further utilizing the first-order normalized directional derivatives $\bar{\partial}^\pm$ defined in (\ref{Pre2_1}) we obtain
\begin{equation}\label{Pre2_2}
    \begin{cases}
        &\partial_\xi = \cos\theta\bar{\partial}^0-\frac{\sin\theta}{2\bar\omega}(\bar{\partial}^+-\bar{\partial}^-),\\
        &\partial_\eta = \sin\theta\bar{\partial}^0+\frac{\cos\theta}{2\bar\omega}(\bar{\partial}^+-\bar{\partial}^-),\\
        &\bar\partial^0 = \frac{\bar\partial^++\bar\partial^-}{2\cos\omega}.
    \end{cases}
\end{equation} 
Using (\ref{Pre1_5}) and (\ref{Pre2_1}), we obtain a new system in terms of angle variables $(\theta,\omega)$ as
\begin{equation}\label{Pre2_3}
    \begin{split}
     \bar\partial^+\theta+\frac{\kappa(\tau)\cos\omega}{1+\kappa(\tau)\bar\omega^2} \bar\partial^+\bar\omega &=\frac{\bar\omega^2}{c}\cdot\frac{\kappa(\tau)-1-2\kappa(\tau)\bar\omega^2}{1+\kappa(\tau)\bar\omega^2},\\
        \bar\partial^-\theta-\frac{\kappa(\tau)\cos\omega}{1+\kappa(\tau)\bar\omega^2} \bar\partial^-\bar\omega &=\frac{\bar\omega^2}{c}\cdot\frac{1-\kappa(\tau)+2\kappa(\tau)\bar\omega^2}{1+\kappa(\tau)\bar\omega^2}.
    \end{split}
\end{equation}
Next, differentiating the pseudo-Bernoulli relation \eqref{Pre1_3} along the characteristic and pseudo-streamline directions yields
\begin{equation}\label{Pre2_4}
    \bar\partial^0\phi=-\frac{c}{\bar\omega}, \hspace{5mm}  \bar\partial^\pm\phi=-\frac{c\cos\omega}{\bar\omega}.
\end{equation}
To simplify later estimates, we introduce the normalized characteristic derivatives of the sound speed
\begin{equation}\label{Pre2_5}
    X=\frac{\bar\partial^+c}{c}, \hspace{5mm}  Y=\frac{\bar\partial^-c}{c}.
\end{equation}
Applying Bernoulli's law (\ref{Pre1_3}) we obtain the relation between $(X, Y)$ and $\omega$ as follows
\begin{equation}\label{Pre2_6}
    \begin{split}
        \bar\partial^+ \omega&= \tan\omega(1+\kappa(\tau)\bar\omega^2)X+\frac{\bar\omega^2}{c},\\
        \bar\partial^- \omega&= \tan\omega(1+\kappa(\tau)\bar\omega^2) Y+\frac{\bar\omega^2}{c}.
    \end{split}
\end{equation}
Also we obtain the relation between $c$ and $\bar\omega$ as
\begin{equation}\label{Pre2_7}
    \bar\partial^0c=\frac{c\bar\partial^0\bar\omega-\bar\omega^2}{\bar\omega(1+\kappa(\tau)\bar\omega^2)}.
\end{equation}
\begin{lemma}\label{Pre2_l1}
    The commutator relation between $\bar\partial^+$ and $\bar\partial^-$ can be expressed as \cite{Jliinteraction2009}
    \begin{equation*}
        \bar\partial^-\bar\partial^+-\bar\partial^+\bar\partial^-=\frac{1}{\sin(2\omega)}\left\{\left[\cos(2\omega)\bar\partial^-\alpha-\bar\partial^+\beta \right]\bar\partial^++ \left[\cos(2\omega)\bar\partial^+\beta-\bar\partial^-\alpha \right]\bar\partial^- \right\}.
    \end{equation*}
\end{lemma}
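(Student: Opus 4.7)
The plan is to prove the commutator identity by direct calculation, relying only on the definitions of $\bar\partial^+$ and $\bar\partial^-$ as first order vector fields with angle coefficients $\alpha$ and $\beta$, together with the relation $\alpha - \beta = 2\omega$ (from $\theta = (\alpha+\beta)/2$, $\omega = (\alpha-\beta)/2$). The argument splits into three short steps.

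First, I would apply $\bar\partial^- = \cos\beta\,\partial_\xi + \sin\beta\,\partial_\eta$ and $\bar\partial^+ = \cos\alpha\,\partial_\xi + \sin\alpha\,\partial_\eta$ to a smooth test function $f$ in both orders. The pure second order terms in $f$ are symmetric in $\partial_\xi,\partial_\eta$ and cancel by Clairaut's theorem, leaving only contributions from differentiating the angle coefficients. Collecting terms, after recognising
\begin{equation*}
\bar\partial^-\alpha = \cos\beta\,\alpha_\xi + \sin\beta\,\alpha_\eta, \qquad \bar\partial^+\beta = \cos\alpha\,\beta_\xi + \sin\alpha\,\beta_\eta,
\end{equation*}
I should arrive at the intermediate identity
\begin{equation*}
[\bar\partial^-,\bar\partial^+]f = \bigl(-\sin\alpha\,\bar\partial^-\alpha + \sin\beta\,\bar\partial^+\beta\bigr) f_\xi + \bigl(\cos\alpha\,\bar\partial^-\alpha - \cos\beta\,\bar\partial^+\beta\bigr) f_\eta.
\end{equation*}

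Second, I would invert the linear system $\bar\partial^+ f = \cos\alpha\,f_\xi+\sin\alpha\,f_\eta$ and $\bar\partial^- f = \cos\beta\,f_\xi+\sin\beta\,f_\eta$, whose determinant is $\cos\alpha\sin\beta - \sin\alpha\cos\beta = -\sin(\alpha-\beta) = -\sin(2\omega)$ by the identification $\alpha-\beta = 2\omega$. Cramer's rule then yields
\begin{equation*}
f_\xi = \frac{\sin\alpha\,\bar\partial^- f - \sin\beta\,\bar\partial^+ f}{\sin(2\omega)}, \qquad f_\eta = \frac{\cos\beta\,\bar\partial^+ f - \cos\alpha\,\bar\partial^- f}{\sin(2\omega)}.
\end{equation*}
This is exactly the step where the prefactor $1/\sin(2\omega)$ in the target formula enters, and it is why the identity is valid only off the sonic line ($\omega\neq 0$).

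Finally, I would substitute these expressions into the intermediate identity and regroup by $\bar\partial^+ f$ and $\bar\partial^- f$. The coefficient of $\bar\partial^+ f$ reduces, using $\sin\alpha\sin\beta + \cos\alpha\cos\beta = \cos(\alpha-\beta) = \cos(2\omega)$ and $\sin^2\beta+\cos^2\beta = 1$, to $\cos(2\omega)\,\bar\partial^-\alpha - \bar\partial^+\beta$. The coefficient of $\bar\partial^- f$ reduces analogously to $\cos(2\omega)\,\bar\partial^+\beta - \bar\partial^-\alpha$, producing the stated formula. The only real obstacle is purely book keeping: keeping track of the signs when differentiating the coefficient vectors and then correctly pairing the trigonometric sums so that the sum to product identities collapse everything into the clean $\cos(2\omega)$ and $1$ factors. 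No use of the pseudo Bernoulli relation, characteristic equations, or the specific van der Waals pressure law is required; the identity is purely a statement about two smooth directional derivatives in the plane whose directions differ by an angle $2\omega$.
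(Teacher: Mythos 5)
Your computation is correct and complete: the cancellation of the symmetric second-order terms, the Cramer inversion with determinant $-\sin(2\omega)$, and the regrouping via $\cos\alpha\cos\beta+\sin\alpha\sin\beta=\cos(2\omega)$ all check out and reproduce the stated identity exactly. The paper itself offers no proof of this lemma — it is quoted from the cited reference — so your direct verification is precisely the standard argument that the authors leave implicit. One small correction to your closing remark: in this paper's convention the sonic curve corresponds to $\bar\omega=\sin\omega=1$, i.e.\ $\omega=\pi/2$, so the degeneracy of the identity at the sonic boundary comes from $\sin(2\omega)=\sin\pi=0$ there, not from $\omega=0$ (the latter is the opposite, infinite-Mach-number limit); the identity requires $\omega\notin\{0,\pi/2\}$.
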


\begin{corollary} The characteristic decomposition of the variable $c$ is 
    \begin{equation}\label{Pre2_8}
    \begin{split}
        \bar\partial^-X &=X \left\{\frac{2\bar\omega\cos\omega}{c}+\frac{X+ Y}{2c\mu^2\cos^2\omega}-(2\bar\omega^2\kappa(\tau)-\tau k'(\tau)+1) Y \right\},\\
        \bar\partial^+ Y &= Y \left\{\frac{2\bar\omega\cos\omega}{c}+\frac{X+ Y}{2c\mu^2\cos^2\omega}-(2\bar\omega^2\kappa(\tau)-\tau k'(\tau)+1)X \right\}.
    \end{split}
\end{equation}
\end{corollary}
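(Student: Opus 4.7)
My approach is to convert second-order directional information about the sound speed $c$ into a first-order evolution of $X$ and $Y$ along the opposite characteristic, by applying the commutator identity of Lemma \ref{Pre2_l1} to $c$ itself and then reducing every quantity that appears to the first-order relations \eqref{Pre2_3}, \eqref{Pre2_6}, \eqref{Pre2_7}, together with the chain rule $\bar\partial^\pm \tau = -\tau\kappa\{Y,X\}$ derived from $c=c(\tau)$ via $c'(\tau)=-c/(\tau\kappa)$.

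First, using $X=\bar\partial^+ c/c$ and $Y=\bar\partial^- c/c$, I differentiate $\bar\partial^+ c = cX$ along $\bar\partial^-$ and $\bar\partial^- c = cY$ along $\bar\partial^+$. The $XY$ cross-terms cancel, so that
\begin{equation*}
(\bar\partial^-\bar\partial^+ - \bar\partial^+\bar\partial^-)c = c(\bar\partial^- X - \bar\partial^+ Y).
\end{equation*}
Substituting this into Lemma \ref{Pre2_l1} yields a single scalar identity expressing $\bar\partial^- X - \bar\partial^+ Y$ in terms of $\bar\partial^\pm \alpha$, $\bar\partial^\pm \beta$, $X$ and $Y$. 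To compute the required characteristic derivatives of $\alpha$ and $\beta$, I write $\alpha = \theta+\omega$, $\beta=\theta-\omega$: the diagonal derivatives $\bar\partial^+\alpha$ and $\bar\partial^-\beta$ drop out immediately by combining \eqref{Pre2_3} with \eqref{Pre2_6}, while for the off-diagonal derivatives $\bar\partial^-\alpha$ and $\bar\partial^+\beta$ I would apply Lemma \ref{Pre2_l1} a second time to the pseudo-velocity potential $\phi$. Because \eqref{Pre2_4} gives $\bar\partial^+\phi = \bar\partial^-\phi = -c\cos\omega/\bar\omega$, the commutator applied to $\phi$ collapses, after a short expansion using \eqref{Pre2_6}, to the clean relation
\begin{equation*}
\bar\partial^-\alpha + \bar\partial^+\beta = \frac{\bar\omega(1+\kappa)}{\cos\omega}(Y-X),
\end{equation*}
which together with the already-known sum $\bar\partial^+\alpha+\bar\partial^-\beta$ pins down the individual off-diagonal pieces.

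This furnishes one equation in the two unknowns $\bar\partial^- X$ and $\bar\partial^+ Y$; a second independent equation is obtained by differentiating the first identity of \eqref{Pre2_6} along $\bar\partial^-$ and the second along $\bar\partial^+$, and eliminating the resulting mixed derivatives $\bar\partial^\mp\bar\partial^\pm\omega$ once more via Lemma \ref{Pre2_l1}. Solving the $2\times 2$ linear system and simplifying with $\mu^2=1/(1+\kappa)$ then yields the symmetric formulas \eqref{Pre2_8}. The main technical obstacle I expect is the careful bookkeeping of the $\kappa(\tau)$ and $\kappa'(\tau)$ terms unique to the van der Waals equation of state: because $\kappa$ is no longer constant as in the polytropic case, every derivative hitting $\kappa$ invokes the chain rule and contributes a new $\tau\kappa'(\tau)$ term through $\bar\partial^\pm\tau$. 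These corrections must be collected consistently in order to reproduce exactly the asymmetric factor $(2\bar\omega^2\kappa(\tau)-\tau\kappa'(\tau)+1)$ in \eqref{Pre2_8}, and to recover the classical polytropic decomposition in the ideal-gas limit $a,b\to 0$.
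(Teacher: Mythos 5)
Your overall strategy --- apply the commutator of Lemma \ref{Pre2_l1} to $c$, observe that $(\bar\partial^-\bar\partial^+-\bar\partial^+\bar\partial^-)c=c\,(\bar\partial^-X-\bar\partial^+Y)$ after the cross terms cancel, and reduce everything to first order using $c'(\tau)=-c/(\tau\kappa)$ to track the $\tau\kappa'(\tau)$ corrections --- is the right starting point, and it is indeed the van der Waals--specific bookkeeping that produces the factor $(2\bar\omega^2\kappa-\tau\kappa'+1)$. But the plan fails at the final step: the two equations you propose to solve are not independent. The commutator applied to $c$ determines only the \emph{difference} $\bar\partial^-X-\bar\partial^+Y$. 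Your second equation, obtained by cross-differentiating \eqref{Pre2_6} and eliminating the mixed derivatives of $\omega$ with the same commutator, again involves only $\bar\partial^-X-\bar\partial^+Y$, because $\bar\partial^+\omega$ and $\bar\partial^-\omega$ carry $X$ and $Y$ with the \emph{same} coefficient $\tan\omega\,(1+\kappa\bar\omega^2)$. Hence your ``$2\times2$ linear system'' has rank one (both rows proportional to $(1,-1)$ in the unknowns $(\bar\partial^-X,\bar\partial^+Y)$) and cannot yield the two separate identities of \eqref{Pre2_8}, which are genuinely asymmetric: $-(\cdots)\,Y$ in the first line versus $-(\cdots)\,X$ in the second.

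The missing ingredient is an equation for the \emph{sum} of the mixed second derivatives, and it comes from the $\theta$-equations. In \eqref{Pre2_3} the coefficient of $\bar\partial^\pm\bar\omega$ changes sign between the two equations, so applying $\bar\partial^-$ to the first, $\bar\partial^+$ to the second and subtracting leaves $\frac{\kappa\cos\omega}{1+\kappa\bar\omega^2}\left(\bar\partial^-\bar\partial^+\bar\omega+\bar\partial^+\bar\partial^-\bar\omega\right)$ plus first-order terms, while the left side $(\bar\partial^-\bar\partial^+-\bar\partial^+\bar\partial^-)\theta$ is known from Lemma \ref{Pre2_l1} and \eqref{Pre2_11}. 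Combining this with the commutator applied to $\bar\omega$ (which gives the difference of the mixed derivatives) determines $\bar\partial^-\bar\partial^+\omega$ and $\bar\partial^+\bar\partial^-\omega$ individually, and then \eqref{Pre2_6} delivers $\bar\partial^-X$ and $\bar\partial^+Y$ separately. Two smaller points: the detour through a second application of the commutator to $\phi$ is unnecessary, since all four quantities $\bar\partial^\pm\alpha$, $\bar\partial^\pm\beta$ are already explicit in $X,Y$ from \eqref{Pre2_11} and \eqref{Pre2_6} via $\alpha=\theta+\omega$, $\beta=\theta-\omega$; and even granting that step, knowing the two sums $\bar\partial^-\alpha+\bar\partial^+\beta$ and $\bar\partial^+\alpha+\bar\partial^-\beta$ would not by itself ``pin down the individual off-diagonal pieces,'' as these are two equations in four unknowns.
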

Denote
\begin{equation}\label{Pre2_9}
     \overline X=A(\kappa(\tau))\bar\omega\sqrt{1+\kappa(\tau)\bar\omega^2} X, \hspace{5mm}  \overline Y=-A(\kappa(\tau))\bar\omega\sqrt{1+\kappa(\tau)\bar\omega^2}  Y, 
\end{equation}
where $\displaystyle{A(\kappa(\tau))=exp\left(\int -\frac{\kappa'(\tau)\bar\omega^2}{2(1+\kappa(\tau)\bar\omega^2)} d\tau \right)}>0$. 

Using (\ref{Pre2_8}) and (\ref{Pre2_9}) we obtain 
\begin{equation}\label{Pre2_10}
    \begin{split}
        \bar\partial^- \overline X &= \overline X \left\{ \frac{1}{A(\kappa(\tau))\bar\omega \mu^2  \sqrt{1+\kappa(\tau)\bar\omega^2}} \cdot \frac{ \overline X- \overline Y}{2\cos^2\omega}+\frac{\bar\omega\cos\omega(3+4\kappa(\tau)\bar\omega^2)}{c(1+\kappa(\tau)\bar\omega^2)}\right \},\\
       \bar\partial^+ \overline Y &= \overline Y \left\{ \frac{1}{A(\kappa(\tau))\bar\omega  \mu^2\sqrt{1+\kappa(\tau)\bar\omega^2}} \cdot \frac{ \overline X- \overline Y}{2\cos^2\omega}+\frac{\bar\omega\cos\omega(3+4\kappa(\tau)\bar\omega^2)}{c(1+\kappa(\tau)\bar\omega^2)}\right \}.
    \end{split}
\end{equation}  
Further, for later use, we define some notations as follows
\begin{equation}\label{Pre2_11}
    \begin{split}
        \bar\partial^+\theta &= -\kappa(\tau)\cos\omega\bar\omega X-\frac{\bar\omega^2}{c},\\
        \bar\partial^-\theta &= \kappa(\tau)\cos\omega\bar\omega Y
        +\frac{\bar\omega^2}{c},\\
        \bar\omega_\xi &=\cos\theta\bar\omega(1+\kappa(\tau)\bar\omega^2){W}+\frac{\cos\theta\bar\omega^2}{c}-\sin\theta\frac{(1+\kappa(\tau)\bar\omega^2)}{2}( X- Y),\\
        \bar\omega_\eta &=\sin\theta\bar\omega(1+\kappa(\tau)\bar\omega^2){W}+\frac{\sin\theta\bar\omega^2}{c}+\cos\theta\frac{(1+\kappa(\tau)\bar\omega^2)}{2}( X- Y),
    \end{split}
\end{equation}
where ${W} =\frac{X+ Y}{2\cos\omega}$.
Also, we obtain 
\begin{equation}\label{Pre2_12}
    \phi_\xi\bar\omega_\eta-\phi_\eta\bar\omega_\xi =\frac{c({X}+{ Y})}{\sin(2\omega)}(1+\kappa(\tau)\bar\omega^2)=\frac{cW}{\bar\omega}(1+\kappa(\tau)\bar\omega^2).
\end{equation}

\subsection{Sonic-supersonic boundary data and the main conclusion in terms of $(\theta, \bar\omega)$}

Consider a smooth curve $\widehat{MN}: \eta=\psi(\xi)(\xi \in[\xi_1,\xi_2])$ in $(\xi,\eta)$-plane. We prescribe the boundary condition on $\widehat{LM}$ for $(\tau,u,v)(\xi,\psi(\xi))=(\hat\tau,\hat u,\hat v)(\xi)$ such that
\begin{equation}\label{Pre3_1}
    \begin{split}
        & \hat\tau(\xi)>0, \hat v(\xi)-\psi(\xi)=\psi'(\xi)(\hat u(\xi)-\xi), \hspace{5mm} \forall \xi \in [\xi_1,\xi_2]\\
        & (\hat u(\xi)-\xi)^2+(\hat v(\xi)-\psi(\xi))^2> c^2(\hat\tau(\xi)) ,\hspace{5mm} \forall \xi \in [\xi_1,\xi_2]\\
        & (\hat u(\xi_2)-\xi_2)^2+(\hat v(\xi_2)-\psi(\xi_2))^2= c^2(\hat\tau(\xi_2)).
    \end{split}
\end{equation} 
These conditions ensure that $\widehat{LM}$ is a pseudo-streamline, along which the flow is strictly supersonic except at the endpoint $M=(\xi_2,\psi(\xi_2))$, where the flow reaches a sonic state. By exploiting (\ref{Pre1_7}) we obtain the data for $(c,\theta,\bar\omega)$ on $\widehat{LM}$
\begin{equation}\label{Pre3_2}
   \begin{split}
        c(\xi,\psi(\xi))&=\sqrt{\frac{K(\gamma+1)\hat\tau^2(\xi)}{(\hat\tau(\xi)-b)^{\gamma+2}}-\frac{2a}{\hat\tau(\xi)}}= \hat c(\xi),\\
        \theta(\xi,\psi(\xi))&=\arctan\left(\frac{\hat v(\xi)-\psi(\xi)}{\hat u(\xi)-\xi} \right)=\hat\theta(\xi), \\
        \bar\omega(\xi,\psi(\xi))&=\frac{\hat c(\xi)}{\sqrt{(\hat u(\xi)-\xi)^2+(\hat v(\xi)-\psi(\xi))^2}} =\hat{\bar\omega}(\xi), \hspace{5mm} \forall \; \xi\in[\xi_1,\xi_2].
   \end{split}
\end{equation}
Combining (\ref{Pre3_1}) and (\ref{Pre3_2}) we obtain
\begin{equation}\label{Pre3_3}
    \hat\theta(\xi)=\arctan( \psi'(\xi))\; \forall\; \xi\in[\xi_1,\xi_2], \hspace{5mm} \hat{\bar\omega}(\xi)<1 \; \forall \;\xi\in[\xi_1,\xi_2), \; \; \hat{\bar\omega}(\xi_2)=1.
\end{equation}
Further, we consider the compatibility condition for the functions $(\hat c, \hat\theta,\hat\omega)$ as follows
\begin{equation}\label{Pre3_4}  
    \cos\hat\theta \hat c'=\frac{\hat c\cos\hat\theta\hat{\bar\omega}'+\hat{\bar\omega}^2}{{\hat{\bar\omega}}(1+\kappa(\hat\tau){\hat{\bar\omega}^2})}, \;\;\; \forall \; \xi\in[\xi_1,\xi_2].
\end{equation}
Using the angle variables introduced above, we reformulate Problem \ref{P1} as an equivalent local boundary value problem.

\begin{problem}\label{P2}
Consider the smooth curve $\widehat{LM}: \eta=\psi(\xi)$ where $\xi_1\leq \xi\leq \xi_2$. Suppose the boundary conditions of $(c,\theta,\omega)|_{\widehat{LM}}=(\hat c,\hat\theta,\hat\omega)(\xi)$ defined in (\ref{Pre3_3}) and (\ref{Pre3_4}) are satisfied. The problem is to determine a smooth sonic curve $\widehat{MN}$ issuing from $M$ and to construct a supersonic solution $(c,\theta,\bar\omega)$ of system (\ref{Pre2_3}) in a neighborhood of $M$, bounded by the curves $\widehat{LM}$ and $\widehat{MN}$.
\end{problem}
To reflect the geometric configuration illustrated in Figure \ref{fig1}, we further assume that the functions $(\psi,\bar\omega)(\xi)$ satisfy
\begin{equation}\label{Pre3_5}
    \begin{split}
        &\psi'(\xi),{\hat{\bar\omega}}(\xi) \in C^2([\xi_1,\xi_2]),\\
        & \psi'(\xi_2)<0, \psi''(\xi_2)<0, \hat{\bar\omega}'(\xi_2)>0.
    \end{split}
\end{equation}
Since the construction is carried out in a sufficiently small neighborhood of the sonic point $M$, the conditions \eqref{Pre3_5} may equivalently be replaced by the uniform bounds
\begin{equation}\label{Pre3_6}
    \begin{split}
    &\psi'(\xi),{\hat{\bar\omega}}(\xi) \in C^2([\xi_1,\xi_2]),\\
        & \psi_0\leq -\psi'(\xi), -\psi''(\xi), \hat{\bar\omega}'(\xi)\leq \psi_1, \; \; \; \forall \;\xi\in[\xi_1,\xi_2],
    \end{split}
\end{equation}
for some positive constants $\psi_0$ and $\psi_1$.

We are now in a position to restate the main result in terms of the angle variables.

\begin{theorem}\label{Pre_th1}
Assume that the boundary data satisfies the conditions \eqref{Pre3_3}, \eqref{Pre3_4} and \eqref{Pre3_6}. Then there exists a sufficiently small smooth sonic curve $\widehat{MN}$ such that Problem \ref{P2} admits a supersonic solution $(c,\theta,\bar\omega)(\xi,\eta)\in C^2$ in a neighborhood of $M$ bounded by $\widehat{LM}$ and $\widehat{MN}$. Moreover, the sonic curve $\widehat{MN}$ is $C^{1,\mu}$ continuous and the solution $(c,\theta,\bar\omega)$ is uniformly $C^{1,\mu}$ up to $\widehat{MN}$ for any $\mu\in(0,1/3)$.
\end{theorem}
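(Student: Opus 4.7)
The plan is to follow the route indicated in the introduction: recast Problem \ref{P2} in the characteristic/angle variables already prepared in Section 2.3, apply a partial hodograph transformation that straightens the pseudo-streamline $\widehat{LM}$ and isolates the degeneracy at $\bar\omega=1$, solve the resulting degenerate hyperbolic problem on a fixed rectangular domain with uniform $C^{1,\mu}$ estimates up to the degenerate boundary, and finally invert the transformation to recover the sonic curve $\widehat{MN}$ and the solution in $(\xi,\eta)$.

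First I would use the pseudo-potential $\phi$ together with a variable that vanishes on the sonic line, for instance $s=\cos\omega$ or $s=1-\bar\omega$, as new independent variables. The identities $\bar\partial^\pm\phi=-c\cos\omega/\bar\omega$ in \eqref{Pre2_4} show that $\phi$ varies monotonically along each characteristic away from the sonic state, so under the compatibility \eqref{Pre3_4} and the geometric assumptions \eqref{Pre3_6}, this change of variables is locally a diffeomorphism. In the new coordinates $\widehat{LM}$ becomes a coordinate line $\{s=s_{\max}\}$ and the sonic point $M$ corresponds to the corner where the characteristic issuing from $M$ meets the degenerate line $\{s=0\}$. The system \eqref{Pre2_3} for $(\theta,\bar\omega)$ together with the relation \eqref{Pre2_7} and the decomposition \eqref{Pre2_10} for the normalized characteristic derivatives $(\overline X,\overline Y)$ becomes a first-order degenerate hyperbolic system in $(\phi,s)$ whose characteristic speeds vanish like a fractional power of $s$ as $s\to0$.

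Next I would construct a solution by a characteristic iteration on this degenerate system: integrate the transport equations for $(\theta,\bar\omega,c)$ along the $\pm$ characteristics starting from the prescribed data on $\{s=s_{\max}\}$, together with the coupled Riccati-type system \eqref{Pre2_10} for $(\overline X,\overline Y)$ along the same curves. The normalization \eqref{Pre2_9} is designed so that the ostensibly singular factor $(\overline X-\overline Y)/(2\cos^2\omega)$ in \eqref{Pre2_10} stays bounded; its boundedness, together with the sign of the invariant $W=(X+Y)/(2\cos\omega)$ enforced by \eqref{Pre3_6}, yields uniform $L^\infty$ bounds on $(\overline X,\overline Y)$ and monotone control of $\bar\omega$ in a small neighborhood of $M$. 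From these I would derive uniform H\"older bounds on $(\overline X,\overline Y)$ up to $\{s=0\}$, the restriction $\mu<1/3$ arising from the cubic-order balance in the degenerate factors of \eqref{Pre2_10} near the sonic line. Passing to the limit in the iteration produces a solution of the degenerate system that is $C^{1,\mu}$ up to $\{s=0\}$.

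Finally I would invert the partial hodograph transformation. Its Jacobian is controlled by the identity $\phi_\xi\bar\omega_\eta-\phi_\eta\bar\omega_\xi=\frac{cW}{\bar\omega}(1+\kappa(\tau)\bar\omega^2)$ derived in \eqref{Pre2_12}, which is uniformly positive as long as $W$ stays bounded away from zero, a property guaranteed by the preceding uniform estimates. Global invertibility then transfers the $C^{1,\mu}$ regularity back to the $(\xi,\eta)$-plane, and the image of the line $\{s=0\}$ is the sought sonic curve $\widehat{MN}$, whose $C^{1,\mu}$ regularity is inherited from that of the solution. The principal obstacle will be closing the uniform estimates near $\{s=0\}$: unlike the polytropic case, the coefficients $\kappa(\tau)$, $\mu^2(\tau)$ and $A(\kappa(\tau))$ in \eqref{Pre2_10} are not explicit power functions of $\tau$ but depend nonlinearly on $\tau$ through the van der Waals constants $a,b$, and one must verify that the crucial cancellations keeping $(\overline X-\overline Y)/\cos^2\omega$ bounded survive this modification, using only the qualitative properties $p'<0$, $p''>0$, $\kappa>0$, $\kappa'>0$ that hold in the physical regime $\tau>\tau_1$.
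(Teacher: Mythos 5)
Your overall route coincides with the paper's: angle variables, the characteristic decomposition \eqref{Pre2_10}, a partial hodograph map built from $(\phi,\cos\omega)$, a priori bounds on the normalized derivatives, and inversion controlled by \eqref{Pre2_12}. However, two steps as you describe them would fail.

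First, the geometry of the hodograph image is not what you assert. Since $\bar\omega$ is strictly increasing along $\widehat{LM}$ and reaches $1$ at $M$, the variable $s=\cos\omega$ is \emph{not} constant on the pseudo-streamline; its image in the $(z,t)$-plane is a curve through the origin with $z\sim t^2$ (cf.\ \eqref{Sol2_8}), hence tangent to the degenerate line $t=0$ at $M'$. You therefore cannot pose the problem on a ``fixed rectangular domain'' with data on $\{s=s_{\max}\}$. This is precisely the difficulty the paper flags (the Cauchy support cannot be chosen up to the sonic boundary): because the characteristic speeds $\Lambda_\pm$ are $O(t^2)$ while the data curve also satisfies $z\sim t^2$, one must \emph{prove} that the domain of determinacy reaches the degenerate line on the correct side of $M'$. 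The paper does this by constructing the comparison curve $\bar z(t)=\tilde z(\delta)-\tfrac{\widetilde M}{3}\delta^3+\tfrac{\widetilde M}{3}t^3$ and choosing $\delta\le\tilde m/\hat M$ so that $\bar z(0)\ge 0$; without some version of this quantitative comparison your iteration has no domain to live on.

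Second, the regularity bookkeeping is off. One does not obtain $C^{1,\mu}$ for the unknowns up to $\{t=0\}$ in the hodograph plane: the $z$-derivatives of $(X,Y)$ genuinely degenerate, and what is actually provable is $t^{\nu}|X_z|,t^{\nu}|Y_z|\le C$, giving $(X,Y)\in C^{1-\nu}$ and $W\in C^{(2-\nu)/3}$ in $(z,t)$ (the cubic relation $|z_2-z_1|\sim t_m^3$ between two points on $t=0$ and the apex of their characteristic triangle is what produces the $1/3$). Moreover the inversion does not ``transfer'' H\"older regularity unchanged: since $t^2=1-\bar\omega^2$ and $\bar\omega$ is only $C^1$ in $(\xi,\eta)$, the map $(\xi,\eta)\mapsto t$ is merely $1/2$-H\"older near the sonic curve, so $C^{2\bar\mu}$ in $(z,t)$ yields only $C^{\bar\mu}$ in $(\xi,\eta)$; it is this halving, applied to $W\in C^{(2-\nu)/3}$, that produces the threshold $\mu<1/3$. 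Finally, the $C^{1,\mu}$ statement for $(c,\theta,\bar\omega)$ in the physical plane does not come from differentiating through the inverse map but from the algebraic formulas \eqref{main_8} and \eqref{Inv1_5} expressing $\nabla\theta$ and $\nabla\bar\omega$ in terms of $(X,Y,W)$, which are $C^\mu$ in $(\xi,\eta)$. As written, your final step would deliver at best H\"older continuity of the solution itself, not of its first derivatives, and would not explain where $1/3$ comes from.
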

The sign conditions imposed in \eqref{Pre3_5} are chosen to match the local geometry of the pseudo-streamline near the sonic point. Other sign configurations may also be considered, provided they ensure the nondegeneracy conditions $\bar{\partial}^\pm c(M)\neq0$ and $\bar{\partial}^0 c(M)\neq0$, which are essential for the construction of supersonic--sonic patch solutions.

\subsection{The boundary data for $({X}, {Y})$}

We obtain the boundary data for $({X}, {Y})$ on $\widehat{LM}$ using the boundary data for $(\theta,\bar\omega)$. Exploring (\ref{Pre2_1}), (\ref{Pre3_3}) and (\ref{Pre3_6}) we obtain
\begin{equation}\label{Pre4_1}
    \begin{split}
        \bar\partial^0\theta|_{\widehat{LM}}&=\cos\hat \theta\theta'=\frac{\cos\hat \theta\psi''}{1+(\psi')^2}<0,\\
        \bar\partial^0\bar\omega|_{\widehat{LM}}&=\cos\hat \theta{\hat{\bar\omega}}'>0.
    \end{split}
\end{equation}
Utilizing (\ref{Pre2_2}) and (\ref{Pre2_3}) we obtain
\begin{equation}\label{Pre4_2}
    \begin{split}
        \bar\partial^+\omega &=\partial^0\bar\omega-\frac{1+\kappa(\tau)\bar\omega^2}{\kappa(\tau)\cos\omega}\bar\partial^0\theta,\\
        \bar\partial^-\omega &=\partial^0\bar\omega+\frac{1+\kappa(\tau)\bar\omega^2}{\kappa(\tau)\cos\omega}\bar\partial^0\theta.
    \end{split}
\end{equation}
Hence, usage of (\ref{Pre4_2}) together with (\ref{Pre2_6}) and (\ref{Pre2_8}) we obtain the expressions of $({X}, { Y})$ as follows
\begin{equation}\label{Pre4_3}
    \begin{split}
        {X} &= \frac{\cos\omega}{\bar\omega(1+\kappa(\tau)\bar\omega^2)} \left\{-\frac{\bar\omega^2}{c}+\partial^0\bar\omega-\frac{1+\kappa(\tau)\bar\omega^2}{\kappa(\tau)\cos\omega}\bar\partial^0\theta \right\}, \\
        { Y} &= \frac{\cos\omega}{\bar\omega(1+\kappa(\tau)\bar\omega^2)} \left\{-\frac{\bar\omega^2}{c}+\partial^0\bar\omega+\frac{1+\kappa(\tau)\bar\omega^2}{\kappa(\tau)\cos\omega}\bar\partial^0\theta \right\}. 
    \end{split}
\end{equation}
Therefore, combining (\ref{Pre4_1}) and (\ref{Pre4_3}) we obtain the boundary data of $({X}, { Y})$ on the pseudo-streamline $\widehat{LM}$ for $\xi\in[\xi_1,\xi_2]$
\begin{equation}\label{Pre4_4}
    \begin{split}
       {X}|_{\widehat{LM}}&=\frac{\cos\hat\omega}{\hat{\bar\omega}(1+\kappa(\hat\tau)\hat{\bar\omega}^2)} \left\{\frac{-\hat{\bar\omega}^2+\hat c\cos\hat\theta\hat{\bar\omega}'}{\hat c} -\frac{1+\kappa(\hat\tau)\hat{\bar\omega}^2}{\kappa(\hat\tau)\cos\hat\omega}\frac{\cos\hat\theta \psi''}{1+(\psi')^2}\right\}(\xi)=\hat a(\xi), \\
       { Y}|_{\widehat{LM}}&=\frac{\cos\hat\omega}{\hat{\bar\omega}(1+\kappa(\hat\tau)\hat{\bar\omega}^2)} \left\{ \frac{-\hat{\bar\omega}^2+\hat c\cos\hat\theta\hat{\bar\omega}'}{\hat c} + \frac{1+\kappa(\hat\tau)\hat{\bar\omega}^2}{\kappa(\hat\tau)\cos\hat\omega}\frac{\cos\hat\theta \psi''}{1+(\psi')^2}\right\}(\xi) =\hat b(\xi).
    \end{split}
\end{equation}
It follows directly from (\ref{Pre3_6}) together with the condition $\bar\omega(\xi_2)=1$ that there exists a number $\xi_0\in[\xi_1,\xi_2]$ such that $\hat a>0$ and $\hat b>0$ for all $\xi\in[\xi_0,\xi_2)$. We denote the corresponding point $(\xi_0,\psi(\xi_0))$ by $Q$.

Further, using (\ref{Pre2_5}), (\ref{Pre2_7}) and (\ref{Pre2_9}) we obtain the expression of $W$ as follows
\begin{equation}\label{Pre4_5}
    W =\frac{(c\bar\partial^0\bar\omega-\bar\omega^2)}{c\bar\omega(1+\kappa(\tau)\bar\omega^2)}.
\end{equation}
Thus, using (\ref{Pre4_1}) and (\ref{Pre4_5}) we obtain the boundary data for $W$ on $\widehat{LM}$
\begin{equation}\label{Pre4_6}
    W|_{\widehat{LM}}=\frac{\hat c\cos\hat\theta\hat{\bar\omega}'-\hat{\bar\omega}^2}{\hat c\hat{\bar\omega}(1+\kappa(\hat\tau)\hat{\bar\omega}^2)}(\xi)=\hat d(\xi).
\end{equation}
One readily verifies that $\hat d(\xi)>0$ for all $\xi\in[\xi_1,\xi_2]$.

Hence, by combining (\ref{Pre3_6}), (\ref{Pre4_4}) and (\ref{Pre4_6}), we derive the boundary conditions $({X},{ Y},W)$ on $\widehat{QM}$. Moreover, for certain constants $\hat m_0$ and $\hat M_0$ we have
\begin{equation}\label{Pre4_7}
    \begin{split}
        \hat a(\xi),\hat b(\xi) \in C^0([\xi_0,\xi_2])\cap C^1([\xi_0,\xi_2)), \hat d(\xi)\in C^1([\xi_0,\xi_2]),\\
        0<\hat m_0\leq \hat a(\xi),\hat b(\xi), \hat d(\xi)\leq \hat M_0, \hspace{5mm} \forall \xi\in[\xi_0,\xi_2],\\
        \hat a(\xi)+\hat b(\xi)=2\sqrt{1-\hat{\bar\omega}^2} \hat d(\xi), \hspace{5mm} \forall \xi\in[\xi_0,\xi_2].   
    \end{split}
\end{equation}
In addition, for $\xi\in[\xi_0,\xi_2)$ it follows from (\ref{Pre4_4}) and (\ref{Pre4_6})
\begin{equation}\label{Pre4_8}
    \begin{split}
        \sqrt{1-\hat{\bar\omega}^2}\hat a'(\xi) &= -\hat{\bar\omega}\hat{\bar\omega}'\hat d+(1-\hat{\bar\omega}^2)\hat d'-\sqrt{1-\hat{\bar\omega}^2}\left(\frac{\cos\hat\theta\psi''}{\kappa(\hat\tau)\bar\omega(1+(\psi')^2)} \right)',\\
        \sqrt{1-\hat{\bar\omega}^2}\hat b'(\xi) &= \hat{\bar\omega}\hat{\bar\omega}'\hat d+(1-\hat{\bar\omega}^2)\hat d'-\sqrt{1-\hat{\bar\omega}^2}\left(\frac{\cos\hat\theta\psi''}{\kappa(\hat\tau)\bar\omega(1+(\psi')^2)} \right)'.
    \end{split}
\end{equation}
Finally, the boundary values of the pseudo-velocity potential $\phi$ and its derivative on $\widehat{LM}$ follow from \eqref{Pre1_3} and \eqref{Pre2_4}:
\begin{equation*}\label{Pre4_9}
    \begin{split}
        \phi|_{\widehat{LM}}& = \frac{\hat c^2}{\hat{\bar\omega}^2}(\xi) -\frac{K}{(\hat\tau(\xi)-b)^\gamma}\left(\frac{\gamma+1}{\gamma}+\frac{b}{\hat\tau(\xi)-b} \right) +\frac{2a}{\hat\tau(\xi)}= \hat\phi(\xi),\\
        \phi'|_{\widehat{LM}}& =-\frac{\hat c}{\hat{\bar\omega}}\sqrt{1+\hat\psi'^2}(\xi)=\hat\phi' (\xi)<0  \; \; \; \forall \; \xi\in[\xi_1,\xi_2].
    \end{split}
\end{equation*}
In particular, $\hat\phi(\xi)$ is strictly decreasing along $\widehat{LM}$.


\section{Solution in a partial hodograph plane}

The system (\ref{Pre2_10}) derived in Section 2 exhibits degeneracy along the sonic boundary, where the characteristic structure collapses and standard hyperbolic methods no longer can be applied directly. In the present setting, this degeneracy is further influenced by the nonlinear effects introduced through the van der Waals equation of state. To make the singular structure explicit and to facilitate the construction of solution near the sonic boundary, we introduce a partial hodograph transformation. This transformation converts the original system into a degenerate hyperbolic system in which the geometry of the degeneracy is fixed and the characteristic directions can be analyzed more effectively. We reformulate the problem (\ref{Pre2_10}) in the hodograph variables and establish the existence and regularity of solution under the boundary conditions (\ref{Pre4_7}) derived in Section 2.
 
\subsection{Reformulation in a partial hodograph plane}
We now introduce a partial hodograph transformation $(\xi,\eta) \mapsto (z,t)$ as 
\begin{equation}\label{Sol1_1}
    t=\cos\omega(\xi,\eta), \;\;\;z=\phi(\xi,\eta)-\hat\phi(\xi_2),
\end{equation}
where $\phi$ denotes the potential function appearing in pseudo-Bernoulli’s law (\ref{Pre1_3}).

From the boundary conditions established in (\ref{Pre3_6}) we recall that $\hat{\bar\omega}'(\xi)>0$ and $\hat\phi'(\xi)<0$ along the pseudo-streamline $\widehat{LM}$. Consequently, the image of the boundary segment $\widehat{MQ}$ under the transformation \eqref{Sol1_1} is a smooth increasing curve $\widehat{M'Q'}$ in the $(z,t)$-plane, parameterized as
\begin{equation}\label{Sol1_2}
    t=\sqrt{1-\hat{\bar\omega}^2(\xi)},\qquad 
    z=\hat\phi(\xi)-\hat\phi(\xi_2),
    \qquad \xi\in[\xi_0,\xi_2].
\end{equation}
 Let $\xi=\hat\xi(z)$ denote the inverse of $z=\hat\phi(\xi)-\hat\phi(\xi_2)$ for $z\in[0,z_0]$. Here $t_0=\sqrt{1-\hat{\bar\omega}^2(\xi_0)}\in(0,1)$ and $z_0=\hat\phi(\xi_0)-\hat\phi(\xi_2)>0$.

Hence, combining with (\ref{Pre4_4}) and (\ref{Pre4_6}), we achieve the boundary data of $({X}, {Y}, W)$ on $\widehat{M'Q'}$
\begin{equation}\label{Sol1_3}
\begin{split}
    {X}|_{\widehat{M'Q'}} &= \hat{a}(\hat{\xi}(z)) =: \hat{a}(z), \\
    { Y}|_{\widehat{M'Q'}} &= \hat{b}(\hat{\xi}(z)) =: \hat{b}(z), \\
   W|_{\widehat{M'Q'}} &=\hat{d}(\hat{\xi}(z)) =: \hat{d}(z) \quad \forall \;z \; \in [0,z_0].
\end{split}
\end{equation}
Furthermore, we also have by (\ref{Pre4_5})
\begin{equation}\label{Sol1_4}
\begin{split}
    &\hat{a}(z), \hat{b}(z), \hat{d}(z) \in C^1([0,z_0]), \\
    &\hat{m}_0 \leq \hat{a}(z), \hat{b}(z), \hat{d}(z) \leq \hat{M}_0, \quad \forall z \in [0,z_0],
\end{split}
\end{equation}
for positive constants $\hat m_0$ and $\hat M_0$.

\subsection{The degenerate system and uniform bounds}

In the hodograph variables $(z,t)$, the characteristic operators $\bar\partial^\pm$ take the form
\begin{equation}\label{Sol1_6}
\begin{split}
    \bar{\partial}^+ &= -\left\{ \frac{{(1+\kappa(\tau)-\kappa(\tau) t^2)(1-t^2)}\cdot {X}}{t} 
+ \frac{\sqrt{(1-t^2)^3}}{c}\right\} \partial_t + \frac{ct}{\sqrt{1-t^2}} \partial_z,\\
\bar{\partial}^- &= -\left\{ \frac{{(1+\kappa(\tau)-\kappa(\tau) t^2)(1-t^2)}\cdot { Y}}{t} 
+ \frac{\sqrt{(1-t^2)^3}}{c} 
\right\} \partial_t + \frac{ct}{\sqrt{1-t^2}} \partial_z.
\end{split}
\end{equation}
Using \eqref{Pre1_8} and (\ref{Sol1_6}) we obtain the expressions of $c, c_t$ and $c_z$ as follows
\begin{equation}\label{Sol1_7}
    \begin{split}
        \begin{cases}
             c(z,t) &= \left[2(1-t^2)\left(\frac{2a}{\tau}-z-\hat\phi(\xi_2)-\frac{K(\gamma\tau+(\tau-b))}{\gamma(\tau-b)^{\gamma+1}} \right) \right]^{1/2},\\
             c_t(z,t) &=-\frac{ct}{(1+\kappa(\tau)-\kappa(\tau) t^2)(1-t^2)},\\ 
             c_z(z,t) &=\frac{t^2-1}{c(1+\kappa(\tau)-\kappa(\tau) t^2)}.
        \end{cases}
    \end{split}
\end{equation}
Since $\tau(z,t)$ remains in a compact subset of $(b,\infty)$ in the local hodograph domain, the function $\mathcal{F}=\frac{2a}{\tau}
-\frac{K\big(\gamma\tau+(\tau-b)\big)}{\gamma(\tau-b)^{\gamma+1}}$ is bounded.
\begin{corollary}\label{cbound}
   The function $\mathcal{F}=\frac{2a}{\tau}
-\frac{K\big(\gamma\tau+(\tau-b)\big)}{\gamma(\tau-b)^{\gamma+1}}$ is bounded. Further, there exist positive constants $\hat{c}_0$ and $\hat{c}_1$ such that $\hat{c}_0\le c(z,t)\le \hat{c}_1$.
\end{corollary}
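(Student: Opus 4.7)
The statement has two independent assertions, and my plan is to treat them in turn, both resting on the compactness of the range of $\tau(z,t)$ in the local hodograph domain.

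\emph{Step 1: Boundedness of $\mathcal{F}$.} The key observation I would make first is that, since the construction is carried out in a sufficiently small neighborhood of the sonic point $M$ and the boundary data $\hat\tau(\xi)$ is continuous with $\hat\tau(\xi)>b$ on the compact arc $[\xi_0,\xi_2]$, the specific volume $\tau(z,t)$ necessarily ranges over a compact subset $[\tau_{\min},\tau_{\max}]\subset(b,\infty)$, so in particular $\tau-b\geq\tau_{\min}-b>0$. The first term $2a/\tau$ is therefore continuous on a compact set bounded away from zero, hence bounded. For the second term, rewriting $\gamma\tau+(\tau-b)=(\gamma+1)\tau-b$, the numerator $K\bigl((\gamma+1)\tau-b\bigr)$ is continuous on $[\tau_{\min},\tau_{\max}]$ and thus bounded, while the denominator $\gamma(\tau-b)^{\gamma+1}$ is bounded below by a strictly positive constant. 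Putting these together immediately yields a two-sided bound on $\mathcal{F}(\tau(z,t))$ on the local hodograph domain.

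\emph{Step 2: Two-sided bounds on $c(z,t)$.} Using the first formula of \eqref{Sol1_7}, I would analyze each factor in $c^2=2(1-t^2)\bigl(\mathcal{F}-z-\hat\phi(\xi_2)\bigr)$ separately. For $1-t^2$: by the definition of the transformation and the sign condition $\hat{\bar\omega}'(\xi_2)>0$, one has $t=\sqrt{1-\hat{\bar\omega}^2(\xi)}\in[0,t_0]$ on $\widehat{M'Q'}$ with $t_0=\sqrt{1-\hat{\bar\omega}^2(\xi_0)}<1$, and the same bound persists (after possibly shrinking the neighborhood) throughout the local hodograph domain, so $0<1-t_0^2\leq 1-t^2\leq 1$. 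For $z$: the parametrization $z=\hat\phi(\xi)-\hat\phi(\xi_2)\in[0,z_0]$ is bounded. Combined with Step 1, this gives the upper bound $c\leq\hat c_1$ at once.

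\emph{Step 3: Strict positivity of the lower bound.} This is the only point that requires care, because the lower bound depends on ruling out vacuum. The observation I would exploit is that $\mathcal{F}(\tau)-z-\hat\phi(\xi_2)=\mathcal{F}(\tau)-\phi$ is, by pseudo-Bernoulli's law \eqref{Pre1_3}, exactly $(U^2+V^2)/2$, which in turn equals $c^2/(2\bar\omega^2)=c^2/\bigl(2(1-t^2)\bigr)$; the defining identity for $c(z,t)$ is therefore a tautology, and the genuine positivity must be extracted from the boundary data. On $\widehat{M'Q'}$, the identity $c|_{\widehat{M'Q'}}=\hat c(\hat\xi(z))$ together with \eqref{Pre3_2} and the fact that $\hat\tau$ stays in the non-vacuum regime gives $c\geq\min_{[\xi_0,\xi_2]}\hat c(\xi)=:2\hat c_0>0$. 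Continuity of $\tau(z,t)$ (and hence of $c$) on the local compact hodograph domain, together with the smallness of the neighborhood, then propagates this lower bound from $\widehat{M'Q'}$ into the interior, yielding $c\geq\hat c_0>0$ throughout.

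\emph{Where the difficulty sits.} The only genuinely nontrivial point is Step 3: once one has assumed (or established) that $\tau(z,t)$ stays in a compact subset of $(b,\infty)$, both parts of the corollary are essentially continuity-and-compactness exercises. I do not expect substantive obstacles here, but I would be careful to state explicitly that the construction is in a sufficiently small neighborhood of $M$, so that the compactness of the range of $\tau$---which ultimately will need to be justified \emph{a posteriori} from the existence theorem for the degenerate hyperbolic system---can be taken for granted at this stage.
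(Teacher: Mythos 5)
Your proposal is correct and takes essentially the same route as the paper, whose entire justification for this corollary is the single sentence preceding it asserting that $\tau(z,t)$ stays in a compact subset of $(b,\infty)$; everything else is the same compactness-and-continuity argument you spell out. The one place you go beyond the paper is Step 3, where you correctly observe that the hodograph formula $c^2=2(1-t^2)\bigl(\mathcal{F}-z-\hat\phi(\xi_2)\bigr)$ is just pseudo-Bernoulli's law rewritten and therefore cannot by itself yield $\hat c_0>0$, so the lower bound must be traced back to the boundary data on $\widehat{M'Q'}$ and propagated by continuity — a genuine subtlety (tied to the mild circularity that $\hat c_0$ enters the definition of the domain $\Omega$ itself) that the paper leaves entirely implicit and that you rightly flag as needing an a posteriori justification.
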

Thus we can deduce the system of variables $({X},{Y})(z,t)$ and $(\overline{X}, \overline{Y})(z,t)$ by (\ref{Pre2_8}) and (\ref{Pre2_10}) as follows
\begin{equation}\label{Sol1_8}
\begin{cases}
\partial_- {X} = -\frac{fX \sqrt{1-t^2}}{ Y-tg} \left\{\frac{2t^2\sqrt{1-t^2}}{c}+\frac{X+ Y}{2\mu^2t}+\left(\tau\kappa'(\tau)-1-2\kappa(\tau)+2t^2\kappa(\tau) \right)  Y t\right\},\\
\partial_+ {Y} =-\frac{f Y \sqrt{1-t^2}}{X-tg} \left\{\frac{2t^2\sqrt{1-t^2}}{c}+\frac{X+ Y}{2\mu^2t}+\left(\tau\kappa'(\tau)-1-2\kappa(\tau)+2t^2\kappa(\tau) \right) X t\right\},
\end{cases}
\end{equation}
and
\begin{equation}\label{Sol1_8a}
\begin{cases}
\partial_- \overline{X} = -\frac{A(\kappa(\tau))\overline X}{\sqrt{1-t^2}\mathcal{T}_1} \left\{\frac{1}{A(\kappa(\tau))\mu^2\sqrt{1+\kappa(\tau)(1-t^2)}}\cdot\frac{\overline X- \overline Y}{2t}+\frac{\sqrt{1-t^2}(3+4\kappa(1-t^2))}{c(1+\kappa(1-t^2))}t^2\right\},\\
\partial_+ \overline{Y} =-\frac{A(\kappa(\tau)) \overline Y}{\sqrt{1-t^2}\mathcal{T}_2} \left\{\frac{1}{A(\kappa(\tau))\mu^2\sqrt{1+\kappa(\tau)(1-t^2)}}\cdot\frac{\overline X- \overline Y}{2t}+\frac{\sqrt{1-t^2}(3+4\kappa(1-t^2))}{c(1+\kappa(1-t^2))}t^2\right\},
\end{cases}
\end{equation}
where
\begin{equation*}\label{Sol1_9}
\partial_\pm = \partial_t + \Lambda_{\pm}\partial_z, \;\;\;\Lambda_- = -\frac{cft^2}{ Y-tg}, \; \Lambda_+ = \frac{cft^2}{tg-X},
\end{equation*}
and
\begin{equation}\label{Sol1_10}
\begin{split}
    f(t,\tau) &=\frac{1}{(1+\kappa(\tau)-\kappa(\tau)t^2)(1-t^2)^{3/2}}\;, \; g(t,\tau) = -\frac{(1-t^2)^2f}{c},\\
    \mathcal{T}_1 &= \frac{A(\kappa(\tau)(Y-tg)}{f(1-t^2)},\;
    \mathcal{T}_2 = \frac{A(\kappa(\tau))(X-tg)}{f(1-t^2)}.
\end{split}
\end{equation}
Also, we can compute
\begin{equation*}\label{Sol1_11}
    \begin{split}
        f_t&=\left\{5\kappa(\tau)t(1-t^2)^{3/2}+3t\sqrt{1-t^2}-\kappa'(\tau)(1-t^2)^{5/2}\tau_t \right\}f^2, \; f_z=-(1-t^2)^{5/2}f^2\kappa'(\tau)\tau_z,\\
        g_t&=\frac{4t(1-t^2)f-(1-t^2)^2f_t}{c}+\frac{(1-t^2)^2f}{c^2}c_t,\; g_z=g\left(\frac{f_z}{f}-\frac{c_z}{c}\right),
    \end{split}
\end{equation*}
where $\tau_t=-\frac{\tau\kappa(\tau)}{c}c_t$ and $\tau_z=-\frac{\tau\kappa(\tau)}{c}c_z$.

Clearly, all the expressions of $f(t,\tau),g(t,\tau),c_t,c_z,\tau_t,\tau_z,f_t,f_z,g_t$ and $g_z$ are uniformly bounded in the region $L'M'N'$, near the sonic boundary $t=0$.

Obviously, system (\ref{Sol1_8}) is a closed system for $({X},{ Y})(z,t)$. For the degenerate boundary value problem (\ref{Sol1_7}), (\ref{Sol1_3}), we have the following theorem.
\begin{theorem}\label{Sol3_th1}
Suppose that (\ref{Sol1_3}) holds. Then there exists a positive real number $\bar\delta\in(0,t_0]$ such that the degenerate boundary value problem (\ref{Sol1_7}), (\ref{Sol1_3}) admits a smooth solution $({X},{Y})(z,t)$ in the whole region $M'N'O'$, where $O'$ is the point $(z(\bar{\delta}), \bar\delta)$ on $\widehat{M'Q'}$ and $N'$  is the intersection point of the positive characteristic passing through $O'$ and the line $t=0$. Moreover, the quantities $({X},{ Y})(z,t)$ are uniformly $C^{1-\nu}$ and $W(z,t)$ is uniformly $C^{2-\nu/3}$ up to the degenerate line $\widehat{M'N'}$ for $\nu\in(0,1).$
 \end{theorem}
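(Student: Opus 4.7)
The plan is to realize the solution on $M'N'O'$ as the limit of a one-parameter family of classical solutions defined on the truncated subregions where the system (\ref{Sol1_8}) is uniformly strictly hyperbolic, and then to pass to the limit at the degenerate line $t=0$ while preserving quantitative regularity. Concretely, for each small $\delta>0$ I would single out the point $O'_\delta$ on $\widehat{M'Q'}$ at height $t=\delta$, consider the triangular region $R_\delta$ bounded by $\widehat{M'_\delta O'_\delta}$, the positive characteristic through $O'_\delta$ and the horizontal segment $\{t=\delta\}$, and solve the characteristic boundary value problem (\ref{Sol1_8})--(\ref{Sol1_3}) there by Picard iteration. Since the denominators $Y-tg$ and $X-tg$ and the coefficient $c$ are bounded below by Corollary \ref{cbound} as long as $X,Y$ stay in a fixed positive range, the iteration converges to a classical $C^1$ solution on $R_\delta$ by standard hyperbolic theory.

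The main analytic work is to derive \emph{uniform} a~priori bounds on this family that are independent of $\delta$. Here the key is to exploit the sign structure of (\ref{Sol1_8}): along the $\bar\partial^-$ (resp.\ $\bar\partial^+$) characteristic the dominant singular source is $-(X+Y)/(2\mu^2 t)$, which is strictly negative once $X,Y>0$, so an invariant rectangle $0<m\le X,Y\le M$ can be propagated from the boundary values $\hat a,\hat b$ in (\ref{Sol1_4}). The compatibility identity $\hat a+\hat b=2\sqrt{1-\bar\omega^2}\hat d$ in (\ref{Pre4_7}) ensures that $X+Y=O(t)$ uniformly along $\widehat{M'Q'}$ near $M'$, which is exactly what is needed to keep $W=(X+Y)/(2t)$ bounded. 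With the $C^0$ bound in hand, I would differentiate (\ref{Sol1_8}) and close a Grönwall estimate on $(X_t,X_z,Y_t,Y_z)$ with $t$-weights calibrated to the characteristic speeds $\Lambda_\pm=O(t^2)$ from (\ref{Sol1_10}).

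Once the family is uniformly bounded in the appropriate weighted $C^1$ norm, Arzelà--Ascoli provides a subsequential limit $(X,Y)\in C^0(\overline{M'N'O'})$, and the equations themselves give $C^1$ regularity in the interior. To upgrade to uniform $C^{1-\nu}$ regularity of $(X,Y)$ up to $\widehat{M'N'}$, I would estimate increments along both families of characteristics: arcs of the $\pm$ characteristics joining two points at height $t$ have $z$-length of order $t^2$, so integrating the source of size $O(1/t)$ along such arcs yields Hölder-type bounds whose exponent can be tuned to any $1-\nu$ for $\nu\in(0,1)$ by balancing the singularity against the characteristic geometry. For the sharper $C^{2-\nu/3}$ bound on $W$, I would differentiate the equation obtained by combining the two rows of (\ref{Sol1_8}), use $W=(X+Y)/(2t)$ together with the vanishing of $X+Y$ at $t=0$ to cancel the leading singularity, and interpolate the resulting source term between the $C^0$ and $C^{1-\nu}$ norms of $(X,Y)$; the exponent $2-\nu/3$ then emerges from the cubic scaling $\mathrm{d}z\sim t^2\,\mathrm{d}t$ of characteristic arcs.

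The main obstacle I anticipate is precisely this balance between the $1/t$ singularity in the source of (\ref{Sol1_8}) and the simultaneous degeneration of the characteristic speeds $\Lambda_\pm$. Establishing that the two degeneracies compensate well enough to give a strictly positive invariant region for $(X,Y)$ throughout the iteration, rather than degenerating to a trivial bound, is delicate and will require the full strength of the structural hypothesis $\kappa'(\tau)>0$ imposed earlier. A second subtle point is obtaining the exponent $\nu/3$ for $W$: it rests on a careful interpolation that uses both the compatibility relation in (\ref{Pre4_7}) and the precise rate at which the characteristic arcs shrink near $t=0$, and naive estimates would only yield $C^{1+\alpha}$ regularity for $W$.
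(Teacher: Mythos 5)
Your overall architecture --- truncated strictly hyperbolic subdomains with classical local solvability, an invariant region for $(X,Y)$ propagated from the boundary data using the compatibility relation $\hat a+\hat b=2t\hat d$, weighted derivative estimates, and H\"older bounds extracted from the degenerate characteristic geometry --- is the same as the paper's (Lemmas \ref{Sol_3_l1}--\ref{Sol_3_l6}). However, the two analytic steps you flag as delicate are precisely where your sketch would not close as written. For the $C^0$ bound, the claim that the source $-(X+Y)/(2\mu^2 t)$ is ``strictly negative once $X,Y>0$'' does not by itself propagate a rectangle: at a first touching point of the lower threshold, the bracket in \eqref{Sol1_8} also contains the sign-indefinite bounded term $(\tau\kappa'-1-2\kappa+2t^2\kappa)Yt$, and the $1/t$ term does not dominate it uniformly. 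The paper's argument needs both the weighted variables $\overline X,\overline Y$ of \eqref{Pre2_9}, which turn the singular term into one proportional to $\overline X-\overline Y$ whose sign is known at the touching point, and the exponential weight $e^{\mp\hat M t}$ to absorb the bounded remainder.

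For the derivative estimates, a direct Gr\"onwall on $(X_t,X_z,Y_t,Y_z)$ fails: the linearized system carries coefficients of size $1/t$ along characteristics reaching $t=0$, so the naive integrating factor diverges, and ``weights calibrated to $\Lambda_\pm=O(t^2)$'' does not identify the cure. The paper's missing ingredient is the reciprocal substitution $\widetilde X=1/X$, $\widetilde Y=-1/Y$ in \eqref{Sol3_1}, which normalizes the leading singularity to exactly $\pm(\widetilde X-\widetilde Y)/(2t)$, followed by the commutator identity \eqref{Sol3_4} giving the precise coefficient $5/(2t)$ in \eqref{Sol4_8}, and the rescaling $\widetilde\Xi=\Xi/t^{2-\nu}$ with integrating factor $t^{-1/2-\nu}$, for which the singular coefficient has a favorable sign. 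Two further corrections: a characteristic arc descending from height $t_m$ to $t=0$ sweeps a $z$-interval of size $t_m^{3}$, not $t_m^{2}$ (you state both, and the exponents $1/3$ and $(2-\nu)/3$ hinge on the cubic scaling); and the regularity of $W$ is $C^{(2-\nu)/3}$ with exponent in $(1/3,2/3)$ (cf.\ Lemma \ref{Sol_3_l6}) --- the theorem's ``$C^{2-\nu/3}$'' is a typographical slip, and your inference that $W$ would be better than $C^{1}$ up to the degenerate line follows that slip rather than the actual estimate.
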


\subsection{Construction of a determinate domain and the $C^0$-estimate}

To establish Theorem \ref{Sol3_th1}, we first construct a well-defined determinate domain for the nonlinear system (\ref{Sol1_7}) and then derive suitable a priori estimates for the solution within this domain.

We restrict attention to a neighborhood of the degenerate line. Let $\delta_0 = \min\{t_0,\, 1/\sqrt{2}\}$ then for any $(z,t) \in [0,z_0] \times [0,\delta_0]$ we have by (\ref{Sol1_7}) 
\begin{equation*}\label{Sol2_1}
\hat{c}_0 \leq c(z,t) \leq \hat{c}_1.
\end{equation*}
Assume that the transformed variables satisfy
\begin{equation}\label{Sol2_4}
\frac{1}{2}\hat{m}_0 \leq \overline{X},\overline{ Y} \leq 2\hat{M}_0.
\end{equation}
Under this assumption, the quantities $\mathcal{T}_1$ and $\mathcal{T}_2$  appearing in the characteristic system \eqref{Sol1_8a} can be estimated directly. For $t \in [0,\delta_1]$
\begin{equation*}\label{Sol2_5}
|\mathcal{T}_1|,|\mathcal{T}_2| \geq \sqrt{1+\kappa(\tau)(1-t^2)}\cdot \frac{1}{2}\hat{m}_0 - \frac{A(\kappa(\tau))(1-t^2)t}{\hat{c}_0} 
\geq \frac{\sqrt{\hat\kappa}}{2}\hat{m}_0 - \frac{\hat A\delta_1}{\hat{c}_0} \geq \frac{\sqrt{\hat\kappa}\hat{m}_0}{4},
\end{equation*}
where the constants are given as
\begin{equation*}\label{Sol2_2}
\hat{M} = 1 + \frac{4\hat A(3+4\hat\kappa)}{\hat{c}_0\sqrt{\hat\kappa}(1+\frac{\hat\kappa}{2})\hat{m}_0}, 
\qquad 
\delta_1 = \min\left\{\delta_0,\, \frac{\hat{c}_0\hat{m}_0\sqrt{\hat\kappa}}{4\hat A},\, \frac{\ln 2}{\hat{M}}\right\}.
\end{equation*}
Here, $\hat \kappa$ and $\hat A$ are positive constants. By this choice, we have  
\begin{equation*}\label{Sol2_3}
\delta_1 \leq \frac{\hat{c}_0\hat{m}_0\sqrt{\hat\kappa}}{4\hat A}, 
\qquad 
e^{\hat{M}\delta_1} \leq 2.
\end{equation*}
Using the above bounds together with the structure of the transformed system, we deduce that the nonlinear coefficients in the last two terms of (\ref{Sol1_7}) satisfy 
\begin{equation}\label{Sol2_6}
\left|\frac{A(\kappa(\tau))(3+4\kappa(\tau)(1-t^2))}{c(1+\kappa(\tau)(1-t^2))\mathcal{T}_2}\right|, 
\quad \left|-\frac{A(\kappa(\tau))(3+4\kappa(\tau)(1-t^2))}{c(1+\kappa(\tau)(1-t^2))\mathcal{T}_1}\right| 
\leq \frac{\hat\kappa\hat A(3+4\hat\kappa)}{\hat{c}_0(1+\tfrac{\hat\kappa}{2})\tfrac{\sqrt{\hat\kappa}\hat{m}_0}{4}} 
< \hat{M}.
\end{equation}
In addition, from the expressions of $\Lambda_{\pm}$ given in (\ref{Sol1_8}) and the estimate (\ref{Sol2_6}), we further obtain
\begin{equation*}\label{Sol2_7}
-\frac{\Lambda_{-}}{t^2}, \ \frac{\Lambda_{+}}{t^2} \leq 
\frac{\hat A \hat{c}_1}{\tfrac{1}{2}\frac{\sqrt{\hat \kappa}\hat{m}_0}{4}} 
= \frac{8\hat A\hat{c}_1}{\hat{m}_0\sqrt{\hat\kappa}} =: \widetilde{M}.
\end{equation*}
We now proceed to the geometric construction of the solution domain. 
Along the image of the boundary curve $\widehat{M'Q'}\cap\{t\le \delta_1\}$, 
a direct differentiation of \eqref{Sol1_2} yields
\begin{equation}\label{Sol2_8}
\begin{aligned}
\tilde{z}'(t) = \frac{\hat{c} \sqrt{1+(\psi')^2}}{\hat{\bar\omega}^2 \hat{\bar\omega}'}\, t.
\end{aligned}
\end{equation}
By the boundary assumptions \eqref{Pre3_6}, there exists a positive constant $\tilde m=\frac{\hat{c}_0 \sqrt{1+\psi_0'^2}}{\psi_1}>0$ such that
\begin{equation*}\label{Sol2_9}
\frac{\hat{c}\sqrt{1+(\psi')^2}}{\hat{\bar\omega}^2 \hat{\bar\omega}'}(\xi) \geq \tilde{m}, 
\qquad \forall \; \xi \in [\xi_1,\xi_2].
\end{equation*}
Define
\begin{equation}\label{Sol2_10}
     \delta = \min\left\{t_0,\, \frac{1}{\sqrt{2}},\, \frac{\hat{c}_0 \hat{m}_0\sqrt{\hat\kappa}}{4\hat A},\, 
      \frac{\ln 2}{\hat{M}},\, \frac{\tilde{m}}{\hat{M}}\right\}.
\end{equation}
Introducing the auxiliary curve
\begin{equation*}\label{Sol2_11}
    \bar z(t)=\tilde z(\delta)-\frac{\widetilde M}{3}\delta^3+\frac{\widetilde M}{3}t^3, \qquad t\in[0,\delta].
\end{equation*}
A direct comparison between \eqref{Sol2_8} and \eqref{Sol2_10} shows that $\bar z(0)\ge0$, which guarantees that the intersection point of the curve $z = \bar{z}(t)$ with the line $t=0$ lies to the right of the point $M'$. Denoting this intersection point by $N'=(\bar z(0),0)$ and setting $P'=(\tilde z(\delta),\delta)$, we define $\Omega=M'N'P'$. By construction, $\Omega$ is a strong determinate domain for \eqref{Sol1_7} whenever \eqref{Sol2_4} holds.

We aim to construct a solution to the degenerate boundary value problem \eqref{Sol1_7}–\eqref{Sol1_3} throughout the domain $\Omega$. It follows from the definition of $\delta$ in \eqref{Sol2_10} that the domain $\Omega$ is determined solely by the prescribed initial constants and is therefore independent of the particular procedure used to obtain the solution.

Moreover, the system \eqref{Sol1_7} is strictly hyperbolic away from the degenerate boundary and its coefficients remain smooth in a neighborhood of the point $P'$ in $\Omega$. Consequently, standard local existence results for quasilinear hyperbolic systems \cite{liboundaryvalue} ensure the existence of a $C^1$ solution in a neighborhood of $P'$. By the boundary conditions \eqref{Sol1_4}, this local solution satisfies the a priori bounds in \eqref{Sol2_4}.

To extend these bounds to the entire domain $\Omega$, we next derive uniform estimates. For any fixed $\epsilon\in(0,\delta]$, we denote by
\begin{equation*}
    \Omega_\epsilon:=\Omega \cap \{(z,t)|t\geq \epsilon\},
\end{equation*}
the truncated subdomain away from the degenerate line. We have the following lemma:
\begin{lemma}\label{Sol_3_l1}
     Assume that (\ref{Sol1_4}) holds and $(\overline{X},\overline{ Y})(z,t)$ be a $C^1$ solution of (\ref{Sol1_7}) and (\ref{Sol1_3}) in the domain $\Omega_\epsilon$. Then the solution $(\overline{X},\overline{ Y})(z,t)$ satisfies
     \begin{equation}\label{l11}
       \frac{1}{2}\hat{m}_0 \leq \overline{X}, \overline{Y} \leq 2\hat{M}_0.
     \end{equation}
\end{lemma}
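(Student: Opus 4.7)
The plan is to close the bounds via a bootstrap continuity argument in the vertical coordinate $t$, combined with characteristic integration of (\ref{Sol1_8a}) along $C^-$ and $C^+$. Dividing the two ODEs by $\overline X$ and $\overline Y$ respectively and integrating backward from an interior point $(z,t)\in\Omega_\epsilon$ to its intersection with the boundary curve $\widehat{M'Q'}$ converts the desired pointwise bound on $(\overline X,\overline Y)$ into a control on the integral of the logarithmic derivatives over a characteristic arc of $t$-length at most $\delta$.

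To set up the bootstrap, I would introduce the set
$$\mathcal{S}:=\{\, t^*\in[\epsilon,\delta]:\tfrac{1}{2}\hat m_0\le\overline X,\overline Y\le 2\hat M_0\text{ throughout }\Omega_\epsilon\cap\{t\ge t^*\}\,\},$$
and observe that $\delta\in\mathcal{S}$ by the boundary bounds (\ref{Sol1_4}) and that $\mathcal{S}$ is closed by the assumed $C^1$ regularity. It therefore suffices to prove that $\mathcal{S}$ is open. Under the bootstrap hypothesis, the $t^2$-weighted second term in the bracket of (\ref{Sol1_8a}) is directly controlled by the constant $\hat M$ via the key estimate (\ref{Sol2_6}), while the first term $(\overline X-\overline Y)/(2t)$ is rewritten using the identity
$$\overline X-\overline Y=A(\kappa(\tau))\sqrt{(1-t^2)(1+\kappa(\tau)(1-t^2))}(X+Y)=2tA(\kappa(\tau))\sqrt{(1-t^2)(1+\kappa(\tau)(1-t^2))}\,W,$$
so that the apparent $1/t$ factor is absorbed. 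Combining with a parallel bound on $W$ propagated from its boundary data (\ref{Pre4_6}) and characteristic structure (\ref{Pre4_5}), the full integrand is uniformly bounded by some constant, still denoted $\hat M$. The smallness condition on $\delta$ in (\ref{Sol2_10}) then yields $|\ln(\overline X(z,t)/\hat a(z_1))|\le \hat M\delta\le \ln 2$, from which $\overline X\in[\hat a/2,2\hat a]\subset[\hat m_0/2,2\hat M_0]$ follows. An identical argument along $C^+$ gives the same bound for $\overline Y$, strictly improving the bootstrap hypothesis and establishing openness of $\mathcal{S}$.

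The main obstacle is the factor $(\overline X-\overline Y)/t$ in the right-hand side of the logarithmic equation, which is nominally singular at $t=0$ and threatens to produce a contribution blowing up as $\epsilon\to 0$. The resolution, outlined above, is that this apparent singularity is algebraic and is compensated by the linear-in-$t$ vanishing of $\overline X-\overline Y$ at the sonic boundary encoded through the structural identity involving $W$. Making this rigorous requires coupling the bootstrap on $(\overline X,\overline Y)$ with a simultaneous pointwise bound on $W$ derived from its own characteristic transport equation; once this coupling is in place, every coefficient in the integrand is uniformly bounded by constants depending only on the initial data, and the choice of $\delta$ in (\ref{Sol2_10}) guarantees $e^{\hat M\delta}\le 2$, which closes the bootstrap and completes the lemma.
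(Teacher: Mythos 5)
Your overall strategy (a continuity/bootstrap argument in $t$ plus Gronwall-type integration of the logarithmic derivative along characteristics) is genuinely different from the paper's, which instead uses an invariant-region argument: it introduces the exponentially weighted variables $\mathcal{X}=\overline{X}e^{\mp\hat M t}$, $\mathcal{Y}=\overline{Y}e^{\mp\hat M t}$, slides the level set of $t$ downward to locate a \emph{first} point where the bound $[\hat m_0/2,\,2\hat M_0]$ would be violated, and derives a sign contradiction in $\partial_-\mathcal{X}$ there. The crucial structural point of that argument is that at the first violation point the difference $\mathcal{X}-\mathcal{Y}$ has a definite sign (e.g.\ $\mathcal{X}-\mathcal{Y}=\hat m_0/2-\mathcal{Y}\le 0$), so the nominally singular term $\frac{\overline X-\overline Y}{2t}$ contributes with the favorable sign and never has to be bounded in magnitude; only the regular $t^2$-weighted term must be dominated, which is exactly what the weight $\hat M$ and the estimate \eqref{Sol2_6} are designed for.

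Your route has a genuine gap at precisely the point you identify as the "main obstacle." You propose to tame $\frac{\overline X-\overline Y}{2t}$ via the (correct) identity $\overline X-\overline Y=2tA(\kappa)\sqrt{(1-t^2)(1+\kappa(1-t^2))}\,W$ together with ``a parallel bound on $W$ propagated from its boundary data \eqref{Pre4_6} and characteristic structure \eqref{Pre4_5}.'' But \eqref{Pre4_5} is only an algebraic expression for $W$, not a transport equation, and no closed evolution equation for $W$ in terms of $(\overline X,\overline Y,W)$ alone is available: the actual equations satisfied by $\widetilde W=\frac{\widetilde X-\widetilde Y}{2t}$, namely \eqref{Sol4_24}, contain the derivative quantities $\widetilde\Xi,\widetilde\Theta$, whose control is the content of Lemma \ref{Sol_3_l3} -- and that lemma, as well as the boundedness of $W$ in Lemma \ref{Sol_3_l4}, is proved \emph{using} the $C^0$ bounds of the present lemma. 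Equivalently, a uniform interior bound on $W$ amounts to showing $\overline X-\overline Y=O(t)$ near the degenerate line, which is exactly Corollary \ref{Sol_3_c1}, a downstream result. So the coupling you invoke cannot be closed at the $C^0$ level and your bootstrap is circular. A secondary issue: even granting a $W$ bound, the constant $\hat M$ entering the choice of $\delta$ in \eqref{Sol2_10} is built only to dominate the regular $t^2$-weighted source term (see \eqref{Sol2_6}); your Gronwall bound would require a larger constant depending on $\sup|W|$, hence a different $\delta$, so the conclusion $e^{\hat M\delta}\le 2$ as stated does not close your estimate. The fix is to abandon the magnitude estimate of the singular term and use its sign at an extremum point, as the paper does.
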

\begin{proof}
We begin by establishing a uniform positive lower bound for the solution. 
To this end, we introduce a weighted transformation $(\overline{X}, \overline{Y})$ as
    \begin{equation*}\label{l12}
        \mathcal{X}=\overline{X}e^{-\hat M t}, \;\;  \mathcal{Y}=\overline{Y}e^{-\hat M t}.
    \end{equation*}
    We obtain the system \eqref{Sol1_8a} in terms of the new variables as follows
    \begin{equation}\label{l13} 
        \begin{split}
            \partial_-\mathcal{X} = -\hat M \mathcal{X}-\frac{A(\kappa(\tau))\mathcal{X}e^{\hat M t}}{\sqrt{1-t^2}\mathcal{T}_1} \left\{\frac{1}{A(\kappa(\tau))\mu^2\sqrt{1+\kappa(\tau)(1-t^2)}}\cdot\frac{\mathcal{X}- \mathcal{Y}}{2t}+\frac{\sqrt{1-t^2}(3+4\kappa(1-t^2))}{c(1+\kappa(1-t^2))}t^2e^{\hat M t}\right\},\\
             \partial_+\mathcal{Y} =-\hat M \mathcal{Y}-\frac{A(\kappa(\tau)) \mathcal{Y}e^{\hat M t}}{\sqrt{1-t^2}\mathcal{T}_2} \left\{\frac{1}{A(\kappa(\tau))\mu^2\sqrt{1+\kappa(\tau)(1-t^2)}}\cdot\frac{\mathcal{X}- \mathcal{Y}}{2t}+\frac{\sqrt{1-t^2}(3+4\kappa(1-t^2))}{c(1+\kappa(1-t^2))}t^2e^{\hat M t}\right\}.
        \end{split}
    \end{equation}  
    Fix $\epsilon'\in[\epsilon,\delta]$ and consider the level set $t=\epsilon'$, which we slide continuously downward from $t=\delta$ toward $t=\epsilon$. Assume, by contradiction, that there exists a first point $A\in\Omega_\epsilon$ where either $\mathcal{X}$ or $\mathcal{Y}$ attains the value $\hat m_0/2$. This point necessarily lies in the compact region bounded by the incoming characteristic curves issuing from $P'$, the initial boundary $\widehat{P'M'}$ and the horizontal segment $t=t_A$.

    By symmetry of the system, it suffices to consider the case
    \[
      \mathcal{X}(A)=\frac{\hat{m}_0}{2}.
    \]
    From the point $A$, we trace the negative and positive characteristic curves backward until they intersect the boundary $\widehat{P'M'}$, denoting the intersection points by $A_{-}$ and $A_{+}$, respectively. Then it follows that
    \[
      \mathcal{X}>\frac{\hat{m}_0}{2}, 
      \qquad 
      \mathcal{Y}>\frac{\hat{m}_0}{2}
      \quad \text{on} \; \; \widehat{AA_+}\setminus\{A\}.
    \]
     Consequently, the characteristic derivative satisfies $\partial_-\mathcal{X}(A)\ge 0$.
     
     However, evaluating first equation of \eqref{l13} at $A$, we find
     \begin{equation*}\label{l14}
         \begin{split}
             \partial_-\mathcal{X} = &-\hat M \mathcal{X}|_A-\frac{A(\kappa(\tau))\mathcal{X}e^{\hat M t}}{\sqrt{1-t^2}\mathcal{T}_1}\Bigg|_A \\ & \cdot\left(\frac{1}{A(\kappa(\tau))\mu^2\sqrt{1+\kappa(\tau)(1-t^2)}}\Bigg|_A\cdot\frac{\frac{\hat m_0}{2}- \mathcal{Y}|_A}{2t_A}+\frac{\sqrt{1-t^2}(3+4\kappa(1-t^2))t^2e^{\hat M t}}{c(1+\kappa(1-t^2))}\Bigg|_A\right)<0,
         \end{split}
     \end{equation*} 
      which contradicts the assumption $\partial_+\mathcal{X}|_A\geq 0$. Therefore, the lower bound holds:
    \begin{equation}\label{l15}
        \overline{X}, \overline{Y} \geq \frac{\hat m_0}{2}e^{\hat M t}\geq \frac{\hat m_0}{2}, \;\; \forall \; (z,t)\in \Omega_\epsilon.
    \end{equation}
    We now establish the upper bound. Define the alternative weighted variables
    \begin{equation*}\label{l16}
        \mathcal{X}=\overline{X}e^{\hat M t}, \;\;  \mathcal{Y}=\overline{Y}e^{\hat M t}.
    \end{equation*}
    We obtain system \eqref{Sol1_8a} in terms of $(\mathcal{X},\mathcal{Y})$  as follows
    \begin{equation}\label{l17} 
        \begin{split}
            \partial_-\mathcal{X} = \hat M \mathcal{X}-\frac{A(\kappa(\tau))\mathcal{X}e^{-\hat M t}}{\sqrt{1-t^2}\mathcal{T}_1} \left\{\frac{1}{A(\kappa(\tau))\mu^2\sqrt{1+\kappa(\tau)(1-t^2)}}\cdot\frac{\mathcal{X}- \mathcal{Y}}{2t}+\frac{\sqrt{1-t^2}(3+4\kappa(1-t^2))}{c(1+\kappa(1-t^2))}t^2e^{-\hat M t}\right\},\\
             \partial_+\mathcal{Y} =\hat M \mathcal{Y}-\frac{A(\kappa(\tau)) \mathcal{Y}e^{-\hat M t}}{\sqrt{1-t^2}\mathcal{T}_2} \left\{\frac{1}{A(\kappa(\tau))\mu^2\sqrt{1+\kappa(\tau)(1-t^2)}}\cdot\frac{\mathcal{X}- \mathcal{Y}}{2t}+\frac{\sqrt{1-t^2}(3+4\kappa(1-t^2))}{c(1+\kappa(1-t^2))}t^2e^{-\hat M t}\right\}.
        \end{split}
    \end{equation}  
    On the boundary $\widehat{P'M'}$, the initial bounds imply
    \begin{equation*}\label{l18}
        \mathcal{X}|_{\widehat{P'M'}}, \; \mathcal{Y}|_{\widehat{P'M'}} \leq \hat M_0e^{\hat M \delta}< 2\hat M_0.
    \end{equation*}

    Assume that there exists a first point $A$ in $\Omega_\epsilon$ such that $\mathcal{X}(A)=2\hat{M}_0$ and $\mathcal{X}, \mathcal{Y}<2\hat M_0$ on $\widehat{AA_+}\setminus\{A\}$, then we have $\partial_-\mathcal{X}|_A\leq 0$.
    Evaluating \eqref{l17} at $A$ we obtain
    \begin{equation*}\label{l19}
         \begin{split}
             \partial_-\mathcal{X} = &\hat M \mathcal{X}|_A+\frac{A(\kappa(\tau))\mathcal{X}e^{-\hat M t}}{\sqrt{1-t^2}\mathcal{T}_1}\Bigg|_A \\ & \cdot\left(\frac{1}{A(\kappa(\tau))\mu^2\sqrt{1+\kappa(\tau)(1-t^2)}}\Bigg|_A\cdot\frac{\frac{\hat m_0}{2}- \mathcal{Y}|_A}{2t_A}+\frac{\sqrt{1-t^2}(3+4\kappa(1-t^2))t^2e^{\hat M t}}{c(1+\kappa(1-t^2))}\Bigg|_A\right)>0,
         \end{split}
     \end{equation*}
     which is the contradiction as $\partial_-\mathcal{X}|_A\leq 0$. Hence,
     \begin{equation}\label{l110}
        \overline{X}, \overline{Y} \leq 2M_0e^{-\hat M t}\leq 2\hat M_0, \;\; \forall \; (z,t)\in \Omega_\epsilon.
    \end{equation}
    Combining \eqref{l15} and \eqref{l110}, we conclude that the desired uniform bounds hold throughout $\Omega_\epsilon$. This completes the proof.
\end{proof}

\subsection{Existence of global solution in $\Omega$}

We now address the extension of the locally constructed $C^{1}$ solution near the point $P'$ to the entire determinate domain $\Omega$. To achieve this, it suffices to derive uniform a priori $C^{1}$ estimates for solutions of the degenerate boundary value problem \eqref{Sol1_7}–\eqref{Sol1_3} away from the sonic line.

In the view of Lemma \ref{Sol_3_l1}, if the solution of (\ref{Sol1_7}), (\ref{Sol1_3}) exists, it is appropriate to introduce
\begin{equation}\label{Sol3_1}
    \widetilde X = \frac{1}{ X}\; \text{and} \; \widetilde Y = -\frac{1}{ Y}.
\end{equation}
The system \eqref{Sol1_8} can be transformed in terms of new variables as
\begin{equation}\label{Sol3_2}
    \begin{split}
        \begin{cases}
            \tilde\partial_+\widetilde X&=\frac{f\sqrt{1-t^2}}{1+tg\widetilde Y}\left( \frac{\widetilde X-\widetilde Y}{2\mu^2 t}-\frac{2t^2\sqrt{1-t^2}\widetilde X\widetilde Y}{c}+\left(\tau\kappa'(\tau)-1-2\kappa(\tau)+2t^2\kappa(\tau)\right)\widetilde X t \right),\\
            \tilde\partial_-\widetilde Y&=\frac{f\sqrt{1-t^2}}{1-tg\widetilde X}\left( -\frac{\widetilde X-\widetilde Y}{2\mu^2 t}+\frac{2t^2\sqrt{1-t^2}\widetilde X\widetilde Y}{c}+\left(\tau\kappa'(\tau)-1-2\kappa(\tau)+2t^2\kappa(\tau)\right)\widetilde Y t \right), 
        \end{cases}
    \end{split}
\end{equation}
where
\begin{equation}\label{Sol3_3}
   \tilde\partial_\pm  =\partial_t+\tilde\Lambda_\pm \partial_z, \; \tilde\Lambda_-=-\frac{cft^2\widetilde X}{1-tg\widetilde X}, \; \tilde\Lambda_+=\frac{cft^2\widetilde Y}{1+tg\widetilde Y}.
\end{equation}
Further we compute
\begin{equation}\label{Sol3_4}
    \frac{\tilde\partial_-\tilde\Lambda_+-\tilde\partial_+\tilde\Lambda_-}{\Lambda_+-\Lambda_-}=\frac{2}{t}+h,
\end{equation}
where 
\begin{equation*}
   \begin{split}
        h(t,z)=&\frac{f\sqrt{1-t^2}}{(1-tg\widetilde X)(1+tg\widetilde Y)} \left\{ \frac{(\widetilde X-\widetilde Y)g}{2\mu^2}-\frac{2t^3\sqrt{1-t^2}g\widetilde X\widetilde Y}{c}+t(\tau\kappa'(\tau)-1-2\kappa(\tau)+2t^2\kappa(\tau))\right\}+ \\ &\frac{(g+tg_t)(\widetilde X-\widetilde Y+2tg\widetilde X\widetilde Y}{(1-tg\widetilde X)(1+tg\widetilde Y)}+\frac{f_t}{f}+tg+\frac{cft^3g_z\widetilde X\widetilde Y}{(1-tg\widetilde X)(1+tg\widetilde Y)}.
   \end{split}
\end{equation*}
We can rearrange the system (\ref{Sol3_2}) in the form
\begin{equation}\label{Sol3_5}
    \begin{split}
        \begin{cases}
          \tilde\partial_+\widetilde X &= \frac{\widetilde X-\widetilde Y}{2t}+H_{11}(\widetilde X-\widetilde Y)+H_{12}t,\\
          \tilde\partial_-\widetilde Y &= -\frac{\widetilde X-\widetilde Y}{2t}+H_{21}(\widetilde X-\widetilde Y)+H_{22}t,
        \end{cases}
    \end{split}
\end{equation}
where
\begin{equation}\label{Sol3_6}
    \begin{split}
        H_{11}&= -\frac{\sqrt{1-t^2}\widetilde Y}{2c(1+\kappa-\kappa t^2)(1+tg\widetilde Y)},\;\; H_{21}= -\frac{\sqrt{1-t^2}\widetilde X}{2c(1+\kappa-\kappa t^2)(1-tg\widetilde X)},\\
        H_{12} &=\frac{(\widetilde X-\widetilde Y)(1+2\kappa-\kappa t^2)}{2(1+tg\widetilde Y)(1-t^2)(1+\kappa-\kappa t^2)}-\frac{f\sqrt{1-t^2}}{1+tg\widetilde Y}\left(\frac{2t\sqrt{1-t^2}\widetilde X\widetilde Y}{c}-\left(\tau\kappa'(\tau)-1-2\kappa(\tau)+2t^2\kappa(\tau)\right)\widetilde X \right),\\
        H_{22} &=-\frac{(\widetilde X-\widetilde Y)(1+2\kappa-\kappa t^2)}{2(1-tg\widetilde X)(1-t^2)(1+\kappa-\kappa t^2)}+\frac{f\sqrt{1-t^2}}{1-tg\widetilde X}\left(\frac{2t\sqrt{1-t^2}\widetilde X\widetilde Y}{c}-\left(\tau\kappa'(\tau)-1-2\kappa(\tau)+2t^2\kappa(\tau)\right)\widetilde Y \right).
    \end{split}
\end{equation}
Now, differentiating (\ref{Sol3_5}) we obtain
\begin{equation}\label{Sol3_7}
    \begin{split}
        \tilde\partial_-\tilde\partial_+\widetilde X &=G_{11}\tilde\partial_-\widetilde X+G_{12},\\
        \tilde\partial_+\tilde\partial_-\widetilde Y &=G_{21}\tilde\partial_+\widetilde X+G_{22},
    \end{split}
\end{equation}
where
\begin{equation*}\label{Sol3_8}
    \begin{split}
          G_{11}=&-\frac{1}{2t}+H_{11}+tH_{12\widetilde X},\;\; G_{21}=\frac{1}{2t}-H_{21}+tH_{22\widetilde Y},\\
        G_{12}=&\left(-\frac{1}{2t}-H_{11}+(\widetilde X-\widetilde Y)H_{11\widetilde Y}+tH_{12\widetilde Y}\right)\cdot \left(-\frac{\widetilde X-\widetilde Y}{2t}+H_{21}(\widetilde X-\widetilde Y)+H_{22}t\right)-\frac{\widetilde X-\widetilde Y}{2t^2}(1-2t^2H_{11t})\\ &+\tilde \Lambda_-(tH_{12z}+(\widetilde X-\widetilde Y)H_{11z})+H_{12}+tH_{12t},\\
        G_{22}=&\left(-\frac{1}{2t}+H_{21}+(\widetilde X-\widetilde Y)H_{21\widetilde X}+tH_{22\widetilde X}\right)\cdot \left(\frac{\widetilde X-\widetilde Y}{2t}+H_{11}(\widetilde X-\widetilde Y)+H_{12}t\right)+\frac{\widetilde X-\widetilde Y}{2t^2}(1-2t^2H_{21t})\\ &+\tilde \Lambda_+(tH_{22z}+(\widetilde X-\widetilde Y)H_{21z})+H_{22}+tH_{22t}.
    \end{split}    
\end{equation*}
Applying commutator relation defined in Lemma \ref{Pre2_l1} for $\widetilde X$ and $\widetilde Y$ and utilizing (\ref{Sol3_4}) we obtain
\begin{equation}\label{Sol3_9}
    \begin{split}
        \tilde\partial_-\tilde\partial_+\widetilde X-\tilde\partial_+\tilde\partial_-\widetilde X&=\left(\frac{2}{t}+h\right)(\tilde\partial_+\widetilde X-\tilde\partial_-\widetilde X),\\
        \tilde\partial_-\tilde\partial_+\widetilde Y-\tilde\partial_+\tilde\partial_-\widetilde Y&=\left(\frac{2}{t}+h\right)(\tilde\partial_+\widetilde Y-\tilde\partial_-\widetilde Y).
    \end{split}
\end{equation}
Further, substituting (\ref{Sol3_7}) into (\ref{Sol3_9}) we have
\begin{equation}\label{Sol3_10}
    \begin{split}
        \begin{cases}
            \tilde\partial_+\tilde\partial_-\widetilde X&=\widetilde G_{11}\tilde\partial_-\widetilde X+\widetilde G_{12},\\
            \tilde\partial_-\tilde\partial_+\widetilde Y&=\widetilde G_{21}\tilde\partial_+\widetilde Y+\widetilde G_{22},
        \end{cases}
    \end{split}
\end{equation}
where
\begin{equation*}\label{Sol3_11}
    \begin{split}
        \widetilde G_{11}&=G_{11}+\frac{2}{t}+h,\;\; \widetilde G_{21}=G_{21}+\frac{2}{t}+h,\\
        \widetilde G_{12}&=G_{12}-\left(\frac{2}{t}+h\right)\cdot\left(\frac{\widetilde X-\widetilde Y}{2t}+H_{11}(\widetilde X-\widetilde Y)+H_{12}t\right),\\
        \widetilde G_{22}&=G_{22}-\left(\frac{2}{t}+h\right)\cdot \left(-\frac{\widetilde X-\widetilde Y}{2t}+H_{21}(\widetilde X-\widetilde Y)+H_{22}t\right).
    \end{split}
\end{equation*}
\begin{lemma}\label{Sol_3_l2}
    Assume that (\ref{Sol1_4}) holds and let $(X, Y)(z, t)$ be a $C^{1}$ solution of (\ref{Sol1_3}) and (\ref{Sol1_8}) in the domain $\Omega_{\epsilon}$. Then the solution $(X, Y)(z, t)$ satisfies
    \begin{equation}
        ||(X, Y)||_{C^1(\Omega_\epsilon)} \leq \frac{\widetilde K}{\epsilon^3},
    \end{equation}
    for a positive constant $\widetilde K$ independent of $\epsilon$.
\end{lemma}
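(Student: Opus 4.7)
My strategy is to pass to the bounded variables $\widetilde X = 1/X$, $\widetilde Y = -1/Y$ of (\ref{Sol3_1}), for which Lemma \ref{Sol_3_l1} together with (\ref{Pre2_9}) supplies uniform two-sided bounds $C_*^{-1} \leq \widetilde X, \widetilde Y \leq C_*$ in $\Omega_\epsilon$, with $C_*$ independent of $\epsilon$. Under these bounds, every coefficient $H_{ij}, G_{ij}, \widetilde G_{ij}, h, f, g, \tilde\Lambda_\pm$ is uniformly controlled modulo its explicit $1/t$ and $1/t^2$ singularities, and the characteristic-speed gap satisfies
\begin{equation*}
\tilde\Lambda_+ - \tilde\Lambda_- = cft^2\left(\frac{\widetilde Y}{1+tg\widetilde Y}+\frac{\widetilde X}{1-tg\widetilde X}\right) \geq \lambda_0 t^2
\end{equation*}
for some $\lambda_0>0$ independent of $\epsilon$. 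The target bound $\widetilde K/\epsilon^3$ will arise by combining an $O(1/t)$ estimate for the characteristic derivatives with this $t^2$-degeneracy when inverting $(\tilde\partial_+,\tilde\partial_-)$ back to $(\partial_z,\partial_t)$.

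The first main step is to bound $\tilde\partial_- \widetilde X$ and $\tilde\partial_+ \widetilde Y$ via the second-order system (\ref{Sol3_10}). Direct inspection gives $\widetilde G_{11} = \frac{3}{2t}+O(1)$, while $\widetilde G_{12}$ contains at most an $O((\widetilde X-\widetilde Y)/t^2)$ singularity, arising from the $-(\widetilde X - \widetilde Y)/(2t^2)$ piece of $G_{12}$ together with the cross term $-(2/t)\cdot(\widetilde X - \widetilde Y)/(2t)$ produced by $h$; since $|\widetilde X - \widetilde Y|$ is uniformly bounded, $|\widetilde G_{12}|\leq C/t^2$. Integrating the ODE $\tfrac{d}{dt}(\tilde\partial_-\widetilde X) = \widetilde G_{11}\tilde\partial_-\widetilde X + \widetilde G_{12}$ along each $+$-characteristic from its intersection with $\widehat{M'P'}$ (where $\tilde\partial_-\widetilde X$ is controlled via the $C^1$ boundary data (\ref{Sol1_4}) together with (\ref{Sol3_5})), with integrating factor $t^{-3/2}$, yields a bounded boundary contribution and a forcing contribution of size $1/t$, so $|\tilde\partial_-\widetilde X|\leq C/t\leq C/\epsilon$ in $\Omega_\epsilon$. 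A symmetric argument along $-$-characteristics gives $|\tilde\partial_+\widetilde Y|\leq C/\epsilon$. The complementary derivatives $\tilde\partial_+\widetilde X$ and $\tilde\partial_-\widetilde Y$ are then read off directly from (\ref{Sol3_5}), which is of the form $\tilde\partial_\pm(\cdot) = \pm(\widetilde X-\widetilde Y)/(2t)+O(1)$, giving $|\tilde\partial_+\widetilde X|,|\tilde\partial_-\widetilde Y|\leq C/\epsilon$ as well.

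To conclude, I convert to $\partial_z$ and $\partial_t$ through
\begin{equation*}
\partial_z \widetilde X = \frac{\tilde\partial_+\widetilde X - \tilde\partial_-\widetilde X}{\tilde\Lambda_+ - \tilde\Lambda_-}, \qquad \partial_t\widetilde X = \tilde\partial_+\widetilde X - \tilde\Lambda_+\,\partial_z\widetilde X,
\end{equation*}
so that $\tilde\Lambda_+-\tilde\Lambda_- \geq \lambda_0 t^2$ and $|\tilde\Lambda_+|\leq Ct^2$ yield $|\partial_z\widetilde X|\leq C/\epsilon^3$ and $|\partial_t\widetilde X|\leq C/\epsilon$, with analogous bounds for $\widetilde Y$. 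Since $X=1/\widetilde X$ and $Y=-1/\widetilde Y$ with $\widetilde X,\widetilde Y$ uniformly bounded away from $0$, the same bounds transfer to $(X,Y)$, establishing $\|(X,Y)\|_{C^1(\Omega_\epsilon)}\leq \widetilde K/\epsilon^3$.

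The principal obstacle is the delicate matching of exponents in the singular integration of the second step: one must verify that the leading singular coefficient of $\widetilde G_{11}$ is exactly $3/(2t)$ (so that the integrating factor is precisely $t^{-3/2}$) and that the singularity of $\widetilde G_{12}$ is no worse than $1/t^2$. Together, these produce an integrand of order $t^{-7/2}$, whose antiderivative $\sim t^{-5/2}$ yields the $1/t$ growth of $\tilde\partial_\pm \widetilde X$; any stronger singularity or mismatched exponent would destroy the estimate. The $1/\epsilon^3$ power in the final bound is then forced: it is the product of the $1/\epsilon$ growth of the characteristic derivatives and the $1/\epsilon^2$ degeneracy of the characteristic gap $\tilde\Lambda_+-\tilde\Lambda_-$.
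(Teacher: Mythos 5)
Your proposal is correct and follows essentially the same route as the paper: pass to $\widetilde X=1/X$, $\widetilde Y=-1/Y$, bound $\widetilde G_{11},\widetilde G_{21}$ by $O(1/t)$ and $\widetilde G_{12},\widetilde G_{22}$ by $O(1/t^2)$, integrate the second-order system \eqref{Sol3_10} along characteristics to get $|\tilde\partial_-\widetilde X|,|\tilde\partial_+\widetilde Y|\le C/\epsilon$, recover the remaining characteristic derivatives from \eqref{Sol3_5}, and pick up the extra $1/\epsilon^2$ from the $t^2$-degeneracy of $\tilde\Lambda_+-\tilde\Lambda_-$ when returning to $(\partial_z,\partial_t)$. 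Your explicit identification of the leading coefficient $3/(2t)$ in $\widetilde G_{11}$ and the integrating factor $t^{-3/2}$ is a slightly more careful rendering of the paper's "integrate along these curves" step, but the argument is the same.
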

\begin{proof}
    In view of Lemma \ref{Sol_3_l1} and the explicit forms of $f,g$ and $H_{ij}$ $(i,j=1,2)$, the terms $h, H_{ij\widetilde X}, H_{ij\widetilde Y}, H_{ijz}$ and $H_{ijt}$ are uniformly bounded in the domain $\Omega_\epsilon$. Hence, there exists a positive constant $\widetilde K$ such that
    \begin{equation}\label{Sol3_12}
        |\widetilde G_{11}|, |\widetilde G_{21}| \leq \frac{\widetilde K}{t}, \; \; |\widetilde G_{12}|, |\widetilde G_{22}| \leq \frac{\widetilde K}{t^2}\;\;\forall \; (z,t)\in \Omega_\epsilon.
    \end{equation}
    Fix an arbitrary point $(z,t)\in \Omega_\epsilon$. We extend the associated positive and negative characteristic curves backward until they intersect the boundary $\widehat{P'M'}$. Integrating system \eqref{Sol3_10} along these curves and invoking (\ref{Sol3_12}) yields
    \begin{equation}\label{Sol3_13}
        |\tilde\partial_-\widetilde X|, |\tilde\partial_+\widetilde Y|\leq \frac{\widetilde K}{\epsilon},
    \end{equation}
    where constant $\widetilde K$ is independent of $\epsilon$. By using (\ref{Sol3_13}) together with (\ref{Sol3_5}) and applying Lemma \ref{Sol_3_l1}, we obtain
    \begin{equation}\label{Sol3_14}
         |\tilde\partial_\pm\widetilde X|, |\tilde\partial_\pm\widetilde Y|\leq \frac{\widetilde K}{\epsilon}.
    \end{equation}
    Next, invoking the relations in \eqref{Sol3_3}, we express the physical derivatives in terms of characteristic derivatives as
    \begin{equation*}\label{Sol3_15}
        \partial_t = \frac{\widetilde X(1+tg\widetilde Y)\tilde\partial_++\widetilde Y(1-tg\widetilde X)\tilde\partial_-}{\widetilde X+\widetilde Y},\;\; \partial_z = \frac{(1+tg\widetilde Y)(1-tg\widetilde X)}{cf(\widetilde Y+\widetilde X)}\cdot\frac{\tilde\partial_+-\tilde\partial_-}{t^2}.
    \end{equation*}
    Combining this with Lemma \ref{Sol_3_l1} and (\ref{Sol3_14}) yields
    \begin{equation}
         ||(\widetilde X, \widetilde Y)||_{C^1(\Omega_\epsilon)} \leq \frac{\widetilde K}{\epsilon^3}.
    \end{equation}
    Consequently, from (\ref{Sol3_1}) we derive the $C^{1}$ estimates of the solution, which completes the proof of the lemma.
\end{proof}
In view of Lemmas \ref{Sol_3_l1} and \ref{Sol_3_l2}, a standard continuation argument may be employed to extend the locally constructed $C^1$ solution near the point $P'$ to the entire determinate domain $\Omega$. Indeed, for any fixed $\epsilon>0$, each level set of $t$ serves as a Cauchy boundary for the truncated region $\Omega_\epsilon$. The admissible continuation step depends only on the boundary data and on the uniform $C^0$ and $C^1$ bounds of $(X, Y)$, which remain uniformly bounded in $\Omega_\epsilon$. Since $\Omega_\epsilon$ is compact, the extension procedure requires only finitely many steps. As $\epsilon$ is arbitrary, this yields a $C^1$ solution in $\Omega\setminus\{t=0\}$.
\begin{theorem}\label{Sol3_th2}
    Suppose that (\ref{Sol1_4}) holds then the boundary value problem (\ref{Sol1_3}) and (\ref{Sol1_8}) admits a $C^1$ solution $(X, Y)(z, t)$ in the domain $\Omega\setminus\{t=0\}$.
\end{theorem}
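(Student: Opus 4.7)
The proof will proceed by a continuation argument built on the uniform a priori estimates established in Lemmas \ref{Sol_3_l1} and \ref{Sol_3_l2}. The strategy is first to obtain a local $C^1$ solution in a neighborhood of the non-degenerate corner $P'$ and then to propagate it downward through successive horizontal slabs $\{t_k \le t \le t_{k-1}\}$ within $\Omega_\epsilon$, for each fixed $\epsilon > 0$.

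First I would invoke the classical local existence theory for quasilinear hyperbolic Cauchy problems (see, e.g., \cite{liboundaryvalue}) in a small neighborhood of $P' = (\tilde z(\delta), \delta)$. In such a neighborhood $t$ is bounded away from zero, so the system (\ref{Sol1_8}) is strictly hyperbolic with smooth coefficients, the eigenvalues $\Lambda_\pm$ are real and distinct, and the Cauchy support $\widehat{P'M'}$ is non-characteristic. Combined with the regularity and bounds of the boundary data in (\ref{Sol1_4}), this yields a $C^1$ solution in a small triangular region adjacent to $P'$, bounded by the two characteristic curves issuing from $P'$. Since these data satisfy the hypothesis (\ref{Sol2_4}) a priori on $\widehat{M'Q'}$, the local solution inherits the uniform $C^0$ bounds used throughout Section~3.

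Next I would extend this local solution by a finite iteration through the truncated domain $\Omega_\epsilon$. At each step one treats a short horizontal slab as a fresh Cauchy problem whose initial data live on the lower boundary of the previously constructed slab. The uniform $C^0$ bound $\tfrac{1}{2}\hat m_0 \le \overline X, \overline Y \le 2\hat M_0$ from Lemma \ref{Sol_3_l1} keeps the characteristic slopes $\Lambda_\pm$ strictly separated (hence the hyperbolicity uniform), while the $C^1$ bound $\|(X,Y)\|_{C^1(\Omega_\epsilon)} \le \tilde K/\epsilon^3$ from Lemma \ref{Sol_3_l2} provides a uniform Lipschitz control on the characteristic field. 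Together these ensure that the admissible continuation step is bounded below by a positive quantity depending only on $\epsilon$, the fixed initial constants, and the geometric data of $\Omega$. Since $\Omega_\epsilon$ is compact, the continuation terminates in finitely many steps and produces a $C^1$ solution throughout $\Omega_\epsilon$. Uniqueness at each step guarantees compatibility between the solutions obtained on $\Omega_{\epsilon_1}$ and $\Omega_{\epsilon_2}$ for $\epsilon_1 < \epsilon_2$, so passing to the union as $\epsilon \downarrow 0$ yields a single $C^1$ solution on $\Omega \setminus \{t = 0\}$.

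The principal difficulty, and the reason the conclusion is stated on $\Omega \setminus \{t=0\}$ rather than on all of $\Omega$, is the $\epsilon^{-3}$ blow-up of the $C^1$ bound in Lemma \ref{Sol_3_l2}, which prevents a uniform limit $\epsilon \downarrow 0$ in the $C^1$ topology and so does not by itself deliver regularity up to the degenerate line. This limitation is intrinsic to the degenerate hyperbolic structure along $t=0$ and will be remedied only by the finer weighted H\"older analysis carried out later (yielding the uniform $C^{1-\nu}$ regularity of $(X,Y)$ and $C^{2-\nu/3}$ regularity of $W$ stated in Theorem \ref{Sol3_th1}). For the present theorem, however, $\epsilon>0$ may be taken arbitrarily small, which suffices to establish the claimed interior $C^1$ existence.
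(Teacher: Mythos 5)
Your proposal is correct and follows essentially the same route as the paper: a local $C^1$ solution near the non-degenerate corner $P'$ from classical quasilinear hyperbolic theory \cite{liboundaryvalue}, followed by a finite-step continuation through each compact truncated domain $\Omega_\epsilon$ driven by the uniform $C^0$ bound of Lemma \ref{Sol_3_l1} and the $\widetilde K/\epsilon^3$ bound of Lemma \ref{Sol_3_l2}, and finally letting $\epsilon\downarrow 0$. Your added remarks on the non-characteristic Cauchy support and on why the $\epsilon^{-3}$ blow-up confines the conclusion to $\Omega\setminus\{t=0\}$ are accurate and consistent with the paper's subsequent weighted H\"older analysis.
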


\subsection{Regularity of the solution}

We now investigate the regularity properties of the solution obtained in
Theorem \ref{Sol3_th2} as they approach the degenerate boundary $t=0$.
The main objective is to show that the solution admits a controlled extension
up to this boundary. To this end, it is sufficient to derive uniform bounds
for the spatial derivatives $(\widetilde X_z,\widetilde Y_z)$ in a neighborhood of
the degenerate line. We therefore define
\begin{equation*}\label{Sol4_1}
    \Xi=\tilde\partial_+\widetilde X-\tilde\partial_-\widetilde X,\; \Theta=\tilde\partial_+\widetilde Y-\tilde\partial_-\widetilde Y.
\end{equation*}
Then we have
\begin{equation}\label{Sol4_2}
    \tilde \Lambda_+-\tilde \Lambda_-=\frac{cf(\widetilde X+\widetilde Y)t^2}{(1-tg\widetilde X)(1+tg\widetilde Y)},\; \widetilde X_z=\frac{\Xi}{\tilde \Lambda_+-\tilde \Lambda_-}, \; \widetilde Y_z=\frac{\Theta}{\tilde \Lambda_+-\tilde \Lambda_-}.
\end{equation}
The commutator relation between $\tilde\partial_+$ and $\tilde\partial_-$ can be expressed as \cite{Jliinteraction2009}
\begin{equation*}\label{Sol4_3}
    \tilde\partial_-\tilde\partial_+-\tilde\partial_+\tilde\partial_-=\frac{\tilde\partial_-\tilde \Lambda_+-\tilde\partial_+\tilde \Lambda_-}{\tilde \Lambda_+-\tilde \Lambda_-}(\tilde\partial_+-\tilde\partial_-).
\end{equation*}
Using (\ref{Sol4_2}) we obtain
\begin{equation}\label{Sol4_4}
    \begin{split}
        \tilde\partial_+\Xi&=\frac{\tilde\partial_-\tilde\Lambda_+-\tilde\partial_+\tilde\Lambda_-}{\tilde\Lambda_+-\tilde\Lambda_-}\Xi+(\tilde\Lambda_+-\tilde\Lambda_-)(\tilde\partial_+\widetilde X)_z,\\
        \tilde\partial_-\Theta&=\frac{\tilde\partial_-\tilde\Lambda_+-\tilde\partial_+\tilde\Lambda_-}{\tilde\Lambda_+-\tilde\Lambda_-}\Theta+(\tilde\Lambda_+-\tilde\Lambda_-)(\tilde\partial_-\widetilde Y)_z.
    \end{split}
\end{equation}
Also, we can compute $(\tilde\partial_+\widetilde X)_z$ and $(\tilde\partial_-\widetilde Y)_z$ as follows
\begin{equation}\label{Sol4_5}
    \begin{split}
        (\tilde\partial_+\widetilde X)_z&=f_1\widetilde X_z+f_2\widetilde Y_z+f_3,\\
        (\tilde\partial_-\widetilde Y)_z&=g_1\widetilde Y_z+g_2\widetilde X_z+g_3,\\
    \end{split}
\end{equation}
where,
\begin{equation*}\label{Sol4_6}
    \begin{split}
        f_1(z,t)=&\frac{1}{2t}-\frac{(\kappa(\tau)+1)g\widetilde Y-t(1+2\kappa(\tau)-t^2\kappa(\tau))(1+tg\widetilde Y)}{2(1-t^2)(1+tg\widetilde Y)(1+\kappa(\tau)-t^2\kappa(\tau))}\\ &+\frac{f\sqrt{1-t^2}}{1+tg\widetilde Y} \left(t(\tau\kappa' (\tau)-1-2\kappa(\tau)+2t^2\kappa(\tau))-\frac{2t^2\sqrt{1-t^2}\widetilde Y}{c} \right),\\
         f_2(z,t)=&-\frac{1}{2t}+\frac{(\kappa(\tau)+1)g\widetilde Y-t(1+2\kappa(\tau)-t^2\kappa(\tau))(1+tg\widetilde Y)}{2(1-t^2)(1+tg\widetilde Y)(1+\kappa(\tau)-t^2\kappa(\tau))}-\frac{2t^2f(1-t^2)\widetilde X}{c(1+tg\widetilde Y)}\\ 
         &-\frac{fg\sqrt{1-t^2}}{(1+tg\widetilde Y)^2}\left(\frac{\widetilde X-\widetilde Y}{2\mu^2(\tau)}+(\tau\kappa'(\tau)-1-2\kappa(\tau)+2t^2\kappa(\tau))t^2\widetilde X-\frac{2t^3\sqrt{1-t^2}\widetilde X\widetilde Y}{c} \right),\\
         f_3(z,t)=& (\widetilde Y-\widetilde X)\left(\mu^2(\tau)\kappa'(\tau)\tau_z+\frac{f_z}{f(1+tg\widetilde Y)} \right)\left( \frac{(\kappa(\tau)+1)g\widetilde Y-t(1+2\kappa(\tau)-t^2\kappa(\tau))(1+tg\widetilde Y)}{2(1+\kappa(\tau)-t^2\kappa(\tau))(1-t^2)(1+tg\widetilde Y)}\right)\\ 
         &+ \frac{\widetilde X-\widetilde Y}{2t}\left(\mu^2(\tau)\kappa'(\tau)\tau_z+\frac{f_z}{f(1+tg\widetilde Y)} \right)+\frac{f\sqrt{1-t^2}}{1+tg\widetilde Y}\left(-\frac{2t^2\sqrt{1-t^2}\widetilde X\widetilde Y}{\tau\kappa(\tau)c}+\widetilde X t[\kappa'(\tau)(2t^2-1)+\tau\kappa''(\tau)]\right)\tau_z\\
         &+\frac{f_z\sqrt{1-t^2}}{(1+tg\widetilde Y)^2}\left(-\frac{2t^2\sqrt{1-t^2}\widetilde X\widetilde Y}{c}+\widetilde X t[\tau\kappa'(\tau)-1-2\kappa(\tau)+t^22\kappa(\tau))]\right) \\
         &+ \frac{fg\widetilde Y\sqrt{1-t^2}}{c(1+tg\widetilde Y)^2}\left(\frac{\widetilde X-\widetilde Y}{2\mu^2(\tau)}-\frac{2t^3\sqrt{1-t^2}\widetilde X\widetilde Y}{c}+\widetilde X t^2[\tau\kappa'(\tau)-1-2\kappa(\tau)+2t^2\kappa(\tau))]\right)c_z,\\
         g_1(z,t)=&\frac{1}{2t}+\frac{(\kappa(\tau)+1)g\widetilde X+t(1+2\kappa(\tau)-t^2\kappa(\tau))(1-tg\widetilde X)}{2(1-t^2)(1-tg\widetilde X)(1+\kappa(\tau)-t^2\kappa(\tau))}\\ &+\frac{f\sqrt{1-t^2}}{1-tg\widetilde X} \left(t(\tau\kappa' (\tau)-1-2\kappa(\tau)+2t^2\kappa(\tau))+\frac{2t^2\sqrt{1-t^2}\widetilde X}{c} \right),\\
         g_2(z,t)=&-\frac{1}{2t}-\frac{(\kappa(\tau)+1)g\widetilde X+t(1+4\kappa(\tau)-2t^2\kappa(\tau))(1-tg\widetilde X)}{2(1-t^2)(1-tg\widetilde X)(1+\kappa(\tau)-t^2\kappa(\tau))}+\frac{2t^2f(1-t^2)\widetilde Y}{c(1-tg\widetilde X)}\\ 
         &-\frac{fg\sqrt{1-t^2}}{(1-tg\widetilde X)^2}\left(\frac{\widetilde X-\widetilde Y}{2\mu^2(\tau)}-(\tau\kappa'(\tau)-1-2\kappa(\tau)+2t^2\kappa(\tau))t^2\widetilde Y-\frac{2t^3f\sqrt{1-t^2}\widetilde X\widetilde Y}{c} \right),\\
        g_3(z,t)=& (\widetilde Y-\widetilde X)\left(\kappa'(\tau)\mu^2(\tau)\tau_z+\frac{f_z}{f(1-tg\widetilde X)} \right)\left( \frac{(\kappa(\tau)+1)g\widetilde X+t(1+4\kappa(\tau)-2t^2\kappa(\tau))(1-tg\widetilde X)}{2(1+\kappa(\tau)-t^2\kappa(\tau))(1-t^2)(1-tg\widetilde X)}\right)\\ 
         &+ \frac{\widetilde Y-\widetilde X}{2t}\left(\kappa'(\tau)\mu^2(\tau)\tau_z+\frac{f_z}{f(1-tg\widetilde X)} \right)+\frac{f\sqrt{1-t^2}}{1-tg\widetilde X}\left(\frac{2t^2\sqrt{1-t^2}\widetilde X\widetilde Y}{\tau\kappa(\tau)c}+\widetilde Y t[\kappa'(\tau)(2t^2-1)+\tau\kappa''(\tau)]\right)\tau_z\\
         &+\frac{f_z\sqrt{1-t^2}}{(1-tg\widetilde X)^2}\left(\frac{2t^2\sqrt{1-t^2}\widetilde X\widetilde Y}{c}+\widetilde Y t[\tau\kappa'(\tau)-1-2\kappa(\tau)+t^22\kappa(\tau))]\right) \\
         &+ \frac{fg\widetilde X\sqrt{1-t^2}}{c(1-tg\widetilde X)^2}\left(\frac{\widetilde X-\widetilde Y}{2\mu^2(\tau)}-\frac{2t^3\sqrt{1-t^2}\widetilde X\widetilde Y}{c}-\widetilde Y t^2[\tau\kappa'(\tau)-1-2\kappa(\tau)+2t^2\kappa(\tau))]\right)c_z.
    \end{split}
\end{equation*}
By exploiting (\ref{Sol3_4}), (\ref{Sol4_2}), (\ref{Sol4_4}) and (\ref{Sol4_5}) we obtain
\begin{equation}\label{Sol4_8}
    \begin{split}
        \tilde\partial_+\Xi =\left(\frac{5}{2t}+\hat f_1(z,t)\right)\Xi+\hat f_2(z,t)\frac{\Theta}{2t}+\hat f_3(z,t)\frac{t}{2},\\
        \tilde\partial_-\Theta =\left(\frac{5}{2t}+\hat g_1(z,t)\right)\Theta+\hat g_2(z,t)\frac{\Xi}{2t}+\hat g_3(z,t)\frac{t}{2},
    \end{split}
\end{equation}
where
\begin{equation*}\label{Sol4_9}
    \begin{split}
        \hat f_1&=h+f_1-\frac{1}{2t}, \; \hat f_2=2tf_2, \; \hat f_3=\frac{2}{t}(\tilde\Lambda_+ -\tilde\Lambda_-)f_3,\\
        \hat g_1&=h+g_1-\frac{1}{2t} , \; \hat g_2=2tg_2, \; \hat g_3=\frac{2}{t}(\tilde\Lambda_+ -\tilde\Lambda_-)g_3.
    \end{split}
\end{equation*}
Noting the explicit forms of $h$, $f_i$ and $g_i$ $(i=1,2,3)$ and applying Lemma \ref{Sol_3_l1} we conclude that the functions $\hat f_1$, $\hat f_2$, $\hat f_3$, $\hat g_1$, $\hat g_2$ and $\hat g_3$ remain uniformly bounded in the domain $\Omega$. Moreover, as $t \to 0$, we observe that $\hat f_2 \to -1$ and $\hat g_2 \to -1$. That is, for a uniform constant $\hat K$ we have
\begin{equation*}\label{Sol4_10}
    |h|,|\hat f_i|,|\hat g_i| \leq \hat K, \; \; \forall \; (z,t)\in \Omega.
\end{equation*}
We next analyze the behavior of the spatial derivatives $\widetilde X_z$ and $\widetilde Y_z$ along the curve $\widehat{M'P'}$ in a neighborhood of the point $M'$. By differentiating $\widetilde X$ along $\widehat{M'P'}$ and invoking \eqref{Sol2_8} together with \eqref{Sol1_2}, we obtain
\begin{equation}\label{Sol4_11}
    \widetilde X_t|_{\widehat{M'P'}}+\frac{\hat c \sqrt{1+(\psi')^2}}{\hat{\bar\omega}^2\hat{\bar\omega}'}t\cdot\widetilde X_z|_{\widehat{M'P'}}=-\frac{\hat a'(\xi)\hat\xi'(t)}{\hat{a}^2}=\frac{\sqrt{1-\hat{\bar\omega}^2}\hat a'(\xi)}{\hat{\bar\omega}\hat{\bar\omega}' \hat{a}^2}.
\end{equation}
From (\ref{Sol4_11}) and exploring the first equation of (\ref{Pre4_8}) we obtain
\begin{equation*}\label{Sol4_12}
    \widetilde X_t|_{\widehat{M'P'}}+\frac{\hat c \sqrt{1+(\psi')^2}}{\hat{\bar\omega}^2\hat{\bar\omega}'}t\cdot\widetilde X_z|_{\widehat{M'P'}}=-\frac{\hat d}{\hat a^2}+\frac{t^2\hat d'}{\hat {\bar\omega}\hat{\bar\omega}'\hat a^2}+\frac{t}{\hat {\bar\omega}\hat{\bar\omega}'\hat a^2} \left(\frac{\cos\hat\theta\psi''}{\kappa(\hat\tau)\bar\omega(1+(\psi)'^2)} \right)'.
\end{equation*}
Now, utilizing the expression of $\widetilde X$ from (\ref{Sol3_5}) and the boundary data from (\ref{Sol1_3}) we obtain
\begin{equation*}\label{Sol4_13}
    \begin{split}
        \left(\frac{\hat c \hat ft}{t\hat g-\hat b}-\frac{\hat c \sqrt{1+(\psi')^2}}{\hat{\bar\omega}^2\hat{\bar\omega}'}\right)\cdot\tilde X_z|_{\widehat{M'P'}} =& \frac{\hat b+\hat a}{2t^2\hat a\hat b}+\frac{\hat b+\hat a}{\hat a\hat bt}\hat H_{11}+\hat H_{12}
        +\frac{\hat d}{t\hat a^2}-\frac{t\hat d'}{\hat {\bar\omega}\hat{\bar\omega}'\hat a^2}-\frac{1}{\hat {\bar\omega}\hat{\bar\omega}'\hat a^2} \left(\frac{\cos\hat\theta\psi''}{\kappa(\hat\tau)\bar\omega(1+(\psi)'^2)} \right)'\\
        =& \frac{2\hat d^2}{\hat a^2\hat b}+\frac{2\hat d}{\hat a\hat b}\hat H_{11}+\hat H_{12}-\frac{t\hat d'}{\hat{\bar\omega}\hat{\bar\omega}'\hat a^2}-\frac{1}{\hat {\bar\omega}\hat{\bar\omega}'\hat a^2} \left(\frac{\cos\hat\theta\psi''}{\kappa(\hat\tau)\bar\omega(1+(\psi)'^2)} \right)',
    \end{split}
\end{equation*}
which indicates that the boundary value $\widetilde X_{z}$ is uniformly bounded on the curve $\widehat{M'P'}$. This conclusion relies on the uniform positivity of $\hat f$ and $\hat g$, the uniform bounds for $\hat{H}_{11}$ and $\hat{H}_{12}$ and the relation $\widehat{a}+\hat{b}=2t\hat{d}$ holds. An analogous argument shows that $\widetilde Y_{z}$ is also uniformly bounded on $\widehat{M'P'}$.

The following lemma is a consequence of the uniform boundedness of $\widetilde X_{z}$ and $\widetilde Y_{z}$ along the curve $\widehat{M'P'}$.
\begin{lemma}\label{Sol_3_l3}
    Let $\nu\in(0,1)$ be fixed. Then the quantities $t^{\nu}|\widetilde  {X}_{z}|$ and $t^{\nu}| \widetilde Y_{z}|$ remain uniformly bounded throughout the entire domain $\Omega$ up to the degenerate line $t=0$.
\end{lemma}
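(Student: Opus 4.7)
The argument reduces to a quantitative estimate on the commutator quantities $\Xi,\Theta$ governed by the transport system \eqref{Sol4_8}. First, I would observe from \eqref{Sol4_2} that
\[
\widetilde X_z=\frac{\Xi}{\tilde\Lambda_+-\tilde\Lambda_-},\qquad \widetilde Y_z=\frac{\Theta}{\tilde\Lambda_+-\tilde\Lambda_-},
\]
and that the denominator $\tilde\Lambda_+-\tilde\Lambda_-=cf(\widetilde X+\widetilde Y)t^{2}/\bigl[(1-tg\widetilde X)(1+tg\widetilde Y)\bigr]$ is, by Lemma~\ref{Sol_3_l1} and the uniform positivity of $c$ and $f$, bounded below by $\kappa_{0}t^{2}$ for a uniform constant $\kappa_{0}>0$. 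Consequently, the lemma is equivalent to proving the pointwise bound
\[
|\Xi(z,t)|+|\Theta(z,t)|\le C\,t^{2-\nu}\quad \text{for all}\;(z,t)\in\Omega.
\]

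The initial estimate on the arc $\widehat{M'P'}$ is already in hand from the computation preceding the lemma: since $\widetilde X_z$ and $\widetilde Y_z$ are uniformly bounded on $\widehat{M'P'}$, we have $|\Xi|,|\Theta|\le C t^{2}\le C t^{2-\nu}$ there. To propagate this bound inward I would carry out a bootstrap along the characteristics of \eqref{Sol4_8}. A crucial structural observation is that although the coefficients of $\Xi$ and $\Theta$ in \eqref{Sol4_8} are singular of order $1/t$, the fact that $\hat f_2,\hat g_2\to -1$ as $t\to 0$ implies that $(\hat f_2+1)/(2t)$ and $(\hat g_2+1)/(2t)$ are uniformly bounded, and the singular part of the system may be recast as
\[
\tilde\partial_+\Xi=\frac{5\Xi-\Theta}{2t}+O(1)\bigl(|\Xi|+|\Theta|\bigr)+O(t),\qquad \tilde\partial_-\Theta=\frac{5\Theta-\Xi}{2t}+O(1)\bigl(|\Xi|+|\Theta|\bigr)+O(t).
\]
The symbolic matrix $\tfrac12\bigl(\begin{smallmatrix}5&-1\\-1&5\end{smallmatrix}\bigr)$ has eigenvalues $2$ and $3$, and this reduction in the effective singular weight is what eventually allows the target exponent $2-\nu$ for every $\nu\in(0,1)$.

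The core calculation is then a Gronwall-type estimate along characteristics. Fix $(z,t)\in\Omega$ and trace the $+$-characteristic forward in $t$ until it meets $\widehat{M'P'}$ at a point of height $t_+>t$. Multiplying the first equation of \eqref{Sol4_8} by the integrating factor $t^{-5/2}$ and integrating from $t$ to $t_+$ gives
\[
|\Xi(t)|\le C\bigl(t/t_+\bigr)^{5/2}|\Xi(t_+)|+C\,t^{5/2}\!\!\int_t^{t_+}\!s^{-5/2}\Bigl(\tfrac{|\Theta(s)|}{2s}+O(s)\Bigr)ds,
\]
and an analogous inequality holds for $\Theta$ along the $-$-characteristic. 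The boundary term contributes $O(t^{2})$, and the $O(s)$-source integrates to $O(t^{2})$ via the elementary estimate $t^{5/2}\int_t^{t_+}s^{-3/2}ds\lesssim t^{2}$. Under the bootstrap ansatz $|\Xi|,|\Theta|\le M t^{2-\nu}$, the coupling integral is dominated by
\[
M\,t^{5/2}\int_t^{t_+}s^{-3/2-\nu}\,ds\;\le\;\frac{M}{1+2\nu}\,t^{2-\nu}.
\]
Taking the supremum of $t^{-(2-\nu)}(|\Xi|+|\Theta|)$ over $\Omega$ yields $M\le C_0+\tfrac{1}{1+2\nu}M$, hence $M\le C_0(1+2\nu)/(2\nu)$ for every $\nu\in(0,1)$. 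The lemma follows upon dividing by $\tilde\Lambda_+-\tilde\Lambda_-\ge\kappa_0 t^{2}$.

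The main obstacle I anticipate lies in the proper identification and exploitation of the cancellation $\hat f_2+1,\hat g_2+1=O(t)$ at the degenerate line. Without this refinement, the coupling term $\hat f_2\Theta/(2t)$ would be controlled only by the $C^{0}$-bound $\hat K$, and the bootstrap would require the restrictive inequality $\hat K<1+2\nu$, which is not available from the general a priori bounds and would fail for small $\nu$. Once the cancellation is extracted, the question is reduced to the benign matrix structure above, whose effective singular eigenvalues $\{2,3\}$ precisely match the scaling budget needed to absorb the driving term $O(t)$ into the ansatz $O(t^{2-\nu})$ for every $\nu\in(0,1)$.
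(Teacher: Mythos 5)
Your proposal follows essentially the same route as the paper's: the weight $t^{-5/2}$ you apply to $\Xi$ is exactly the paper's weight $t^{-\frac12-\nu}$ applied to the rescaled quantity $\widetilde\Xi=\Xi/t^{2-\nu}$ in \eqref{Sol4_14}--\eqref{Sol4_16}, the integration is along the same characteristics back to $\widehat{M'P'}$, and both arguments hinge on the observation you single out, namely $\hat f_2,\hat g_2\to-1$ (equivalently $f_2,g_2=-\tfrac{1}{2t}+O(1)$ from their explicit formulas), so that the coupling term is essentially $|\Theta|/(2s)$ and the absorbed fraction is $\tfrac{1}{1+2\nu}<1$ for every $\nu\in(0,1)$. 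Two points need shoring up. First, ``taking the supremum of $t^{-(2-\nu)}(|\Xi|+|\Theta|)$ over $\Omega$'' presupposes that this supremum is finite, which is not known a priori on the closure of $\Omega$; circumventing this is precisely the purpose of the paper's sliding-level-set, first-violation-point contradiction with the threshold $\widetilde M$ of \eqref{Sol4_18}. You can repair your version by running the absorption on $\Omega_\epsilon=\Omega\cap\{t\ge\epsilon\}$, where Lemma \ref{Sol_3_l2} guarantees finiteness, noting that the characteristics issuing from a point of $\Omega_\epsilon$ reach $\widehat{M'P'}$ without leaving $\Omega_\epsilon$ so all constants are independent of $\epsilon$, and then letting $\epsilon\to0$. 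Second, your Gronwall display silently drops the $O(1)\bigl(|\Xi|+|\Theta|\bigr)$ part of the source; under the ansatz it contributes a term of order $\delta\,M\,t^{2-\nu}$ (or requires a bounded multiplicative correction to the integrating factor), which is absorbable only after remarking that the height of the working region may be taken small --- the analogue of the paper's choice $\bar\epsilon\le\nu/(4\hat K)$ in \eqref{Sol4_17}. With these two standard repairs the argument is correct, and your diagnosis of the role of the cancellation $\hat f_2+1=O(t)$ accurately identifies the point on which the paper's own estimate implicitly relies.
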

\begin{proof} 
    Define the rescaled variables
\begin{equation}\label{Sol4_14}
\widetilde \Xi=\frac{\Xi}{t^{2-\nu}},\qquad \widetilde \Theta=\frac{\Theta}{t^{2-\nu}}.
\end{equation}
From Lemma \ref{Sol_3_l1} and (\ref{Sol4_2}), it suffices to show that $\widetilde \Xi$ and $\widetilde \Theta$ are uniformly bounded in $\Omega$.

The governing system for $(\widetilde \Xi, \widetilde \Theta)$ is
\begin{equation*}\label{Sol4_15}
\begin{split}
    \tilde{\partial}_+\widetilde \Xi
&=\left(\frac{1+2\nu}{2t}+\hat f_1\right)\widetilde\Xi
+\frac{\widetilde \Theta}{2t}\hat f_2+\frac{t^{\nu-1}}{2}\hat f_{3},\\
\tilde{\partial}_-\widetilde\Theta
&=\left(\frac{1+2\nu}{2t}+\hat g_1\right)\widetilde\Theta+\frac{\widetilde \Xi}{2t}\hat g_2+\frac{t^{\nu-1}}{2}\hat g_{3}.
\end{split}
\end{equation*}
We also evaluate
\begin{equation}\label{Sol4_16}
\begin{split}
    \tilde{\partial}_+\left(t^{-\frac{1}{2}-\nu}\widetilde \Xi \right)
&=t^{-\frac{3}{2}-\nu}\left(\hat f_1t\widetilde\Xi+\frac{\widetilde \Theta}{2}\hat f_2+\frac{t^{\nu}}{2}\hat f_{3}\right),\\
\tilde{\partial}_-\left(t^{-\frac{1}{2}-\nu}\widetilde\Theta \right)
&=t^{-\frac{3}{2}-\nu}\left(\hat g_1t\widetilde\Theta+\frac{\widetilde \Xi}{2}\hat g_2+\frac{t^{\nu}}{2}\hat g_{3}\right).
\end{split}
\end{equation}
Since $\widetilde X_{z}$ and $\widetilde Y_{z}$ are uniformly bounded on $\widehat{M'P'}$, (\ref{Sol4_8}) implies $(\widetilde \Xi, \widetilde\Theta)$ are uniformly bounded on $\widehat{M'P'}$.

Define
\begin{equation}\label{Sol4_17}
\overline C=\max_{\widehat{M'P'}}\{|\widetilde\Xi|, |\widetilde\Theta|\},\qquad
\bar{\epsilon}=\min\left\{\bar{\delta},\frac{\nu}{4\hat K}\right\}>0.
\end{equation}
The domain $\Omega$ is partitioned into two subregions
$\Omega_{1}:=\Omega\cap \{t\le \bar{\epsilon}\}$ and $\Omega_{2}:=\Omega\cap \{t\ge \bar{\epsilon}\}$. It is clear that the variables $(\widetilde \Xi, \widetilde\Theta)$ remain uniformly bounded throughout the region $\overline\Omega_{2}$, where $\overline\Omega_{2}$ denotes the closure of $\Omega_{2}$.

Let us introduce the notation
\begin{equation}\label{Sol4_18}
    \widetilde M=1+2\max\{\overline C, \max_{\overline\Omega_2}\{|\widetilde\Xi|, |\widetilde\Theta|\} \}.
\end{equation}
We aim to establish that
\begin{equation}\label{Sol4_19}
\max_{\Omega_{1}} \{|\widetilde\Xi(z,t)|, |\widetilde\Theta(z,t)|\} < \widetilde M,
\end{equation}
which implies that $\widetilde\Xi$ and $\widetilde\Theta$ remain uniformly bounded throughout the full domain $\Omega$. From the definition of $\widetilde M$ in (\ref{Sol4_18}), we have
\begin{equation}\label{Sol4_20}
    |\widetilde\Xi|, |\widetilde\Theta|<\frac{\widetilde M}{2}, \; \; \forall (z,t)\in (\widehat{M'P'}\cap \{t\le \bar{\epsilon}\})\cup (\Omega \cap \{t=\bar\epsilon\}).
\end{equation}
\begin{figure}
    \centering
    \includegraphics[width=0.45\linewidth]{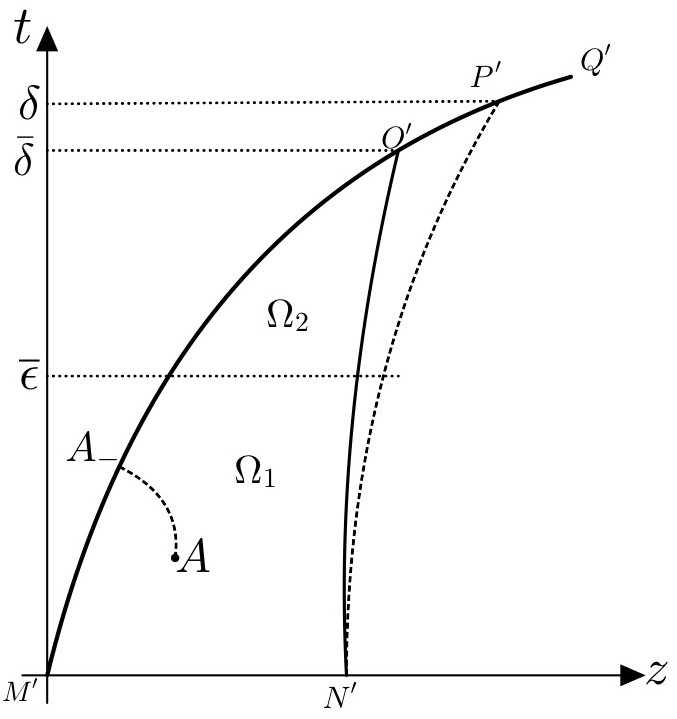}
    \caption{The region $M'N'O'$}
    \label{fig2}
\end{figure}
We prove this by a contradiction argument (\ref{Sol4_19}). Slide the level set of $t$ downward from $t=\bar{\epsilon}$ to $t=0$ and assume that there exists a first point $A(z,t)\in \Omega_{1}$ at which the bound is violated. Without loss of generality, suppose that $|\widetilde\Xi(A)| = \widetilde M$.

From $A$ trace the negative characteristic backward until it intersects either the boundary segment $\widehat{M'P'}\cap \{t\le \bar{\epsilon}\}$ or the horizontal line $t=\bar{\epsilon}$ at a point $A_{-}(z_{-}, t_{-})$(See Figure \ref{fig2} for more information). Along this negative characteristic from $A$ to $A_{-}$ we still have $|\widetilde\Xi|\le \widetilde M$ and $|\widetilde\Theta|\le \widetilde M$.

Integrating the $\widetilde\Xi$ in (\ref{Sol4_16}) along this characteristic from $A_{-}$ to $A$ yields
\begin{equation*}\label{Sol4_21}
{t_-}^{-\frac{1}{2}-\nu}\widetilde\Xi(A_-)-t^{-\frac{1}{2}-\nu}\widetilde\Xi(A)=\int_t^{t_-}s^{-\frac{3}{2}-\nu}\left(\hat f_1s\widetilde\Xi+\frac{\widetilde \Theta}{2}\hat f_2+\frac{s^{\nu}}{2}\hat f_{3}\right)ds.
\end{equation*}
From this relation, together with the selection of $\bar{\epsilon}$ in (\ref{Sol4_17}), we obtain
\begin{equation*}\label{Sol4_22}
    \begin{split}
        \widetilde M&=|\widetilde\Xi(A)|\\
                &=t^{\frac{1}{2}+\nu}\left|{t_-}^{-\frac{1}{2}-\nu}\widetilde\Xi(A_-)-\int_t^{t_-}s^{-\frac{3}{2}-\nu}\left(\hat f_1s\widetilde\Xi+\frac{\widetilde \Theta}{2}\hat f_2+\frac{s^{\nu}}{2}\hat f_{3}\right)ds\right|\\
                &\leq t^{\frac{1}{2}+\nu}{t_-}^{-\frac{1}{2}-\nu}|\widetilde\Xi(A_-)|+t^{\frac{1}{2}+\nu}\int_t^{t_-}s^{-\frac{3}{2}-\nu}\left(|\hat f_1|s|\widetilde\Xi|+\frac{|\widetilde \Theta|}{2}|\hat f_2|+\frac{s^{\nu}}{2}|\hat f_{3}|\right)ds\\
                &\leq \frac{\widetilde M}{2}t^{\frac{1}{2}+\nu}{t_-}^{-\frac{1}{2}-\nu}+t^{\frac{1}{2}+\nu}\int_t^{t_-}s^{-\frac{3}{2}-\nu}\left(\hat K \widetilde M \bar\epsilon+\frac{\widetilde M}{2}\hat K+\frac{{\bar\epsilon}^{\nu}}{2}\hat K\right)ds\\
                &\leq \frac{\widetilde M}{2}t^{\frac{1}{2}+\nu}{t_-}^{-\frac{1}{2}-\nu}+t^{\frac{1}{2}+\nu}\int_t^{t_-}s^{-\frac{3}{2}-\nu}\left(\frac{\nu}{4} \widetilde M+\frac{\widetilde M}{2}\hat K+\frac{\nu}{4}\widetilde M\right)ds\\
                &\leq \left( \frac{1}{2}-\frac{\nu+\hat K}{1+2\nu}\right)\widetilde M t^{\frac{1}{2}+\nu}{t_-}^{-\frac{1}{2}-\nu}+ \frac{\nu+\hat K}{1+2\nu} \widetilde M <\widetilde M.
    \end{split}
\end{equation*}
This yields a contradiction. In this step we have used the relations $t_-\le \bar{\epsilon}$ and $|\Xi(A_{-})|\le \frac{1}{2}\widetilde M$ from (\ref{Sol4_20}). If the first point $A(z,t)\in \Omega_{1}$ where a violation occurs satisfies $|\tilde \Theta(P)|=\widetilde M$, an analogous argument produces the same contradiction. Hence for every point $(z,t)\in \Omega_{1}$ the relation (\ref{Sol4_19}) holds.
This completes the proof of the lemma.
\end{proof}
\begin{lemma}\label{Sol_3_l4}
    The function $W$ remains uniformly bounded all the way up to the degenerate line $\widehat{M'N'}$.
\end{lemma}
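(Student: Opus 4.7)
The plan is to reduce the uniform boundedness of $W$ to that of a rescaled difference quantity governed by a non-singular linear transport equation along the positive characteristic, and then to invoke Gronwall's inequality starting from bounded Cauchy data on $\widehat{M'P'}$.

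In the hodograph variables one has $W=(X+Y)/(2t)$. The compatibility relation $\hat a+\hat b=2\sqrt{1-\hat{\bar\omega}^2}\,\hat d$ from \eqref{Pre4_7}, combined with $t=\sqrt{1-\hat{\bar\omega}^2}$ on $\widehat{M'Q'}$, shows that $W|_{\widehat{M'P'}}=\hat d(z)$, which is uniformly bounded by \eqref{Sol1_4}. Switching to the reciprocal variables $\widetilde X=1/X$, $\widetilde Y=-1/Y$ from \eqref{Sol3_1} yields the algebraic identity $W=-D/(2t\widetilde X\widetilde Y)$ with $D:=\widetilde X-\widetilde Y$, and Lemma~\ref{Sol_3_l1} (transferred to $\widetilde X,\widetilde Y$ via \eqref{Pre2_9}) keeps $|\widetilde X\widetilde Y|$ uniformly bounded away from $0$. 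Hence the problem reduces to bounding $E:=D/t$, whose Cauchy value on $\widehat{M'P'}$ equals $2\hat d/(\hat a\hat b)$ and is uniformly bounded by \eqref{Pre4_7}.

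The key step is to derive a transport equation for $E$ that is free of $1/t$ singularities. Combining the two equations in \eqref{Sol3_5} along the single direction $\tilde\partial_+$ through the commutator identity $\tilde\partial_+\widetilde Y=\tilde\partial_-\widetilde Y+(\tilde\Lambda_+-\tilde\Lambda_-)\widetilde Y_z$, one expects
\[
    \tilde\partial_+ D=\frac{D}{t}+(H_{11}-H_{21})D+(H_{12}-H_{22})t-(\tilde\Lambda_+-\tilde\Lambda_-)\widetilde Y_z.
\]
Dividing by $t$ and using $\tilde\partial_+ E=\tilde\partial_+ D/t-D/t^{2}$, the $D/t^{2}$ contributions cancel exactly, producing a linear equation of the form $\tilde\partial_+ E=a(z,t)E+b(z,t)$. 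By \eqref{Sol3_6} and Lemma~\ref{Sol_3_l1}, the coefficients $H_{11}-H_{21}$ and $H_{12}-H_{22}$ are uniformly bounded on $\Omega$; the remaining contribution from $-(\tilde\Lambda_+-\tilde\Lambda_-)\widetilde Y_z/t$ is controlled by the $O(t^{2})$ vanishing of $\tilde\Lambda_+-\tilde\Lambda_-$ from \eqref{Sol4_2} and the estimate $\widetilde Y_z=O(t^{-\nu})$ from Lemma~\ref{Sol_3_l3}, which yields an $O(t^{1-\nu})$ bound for any $\nu\in(0,1)$.

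Since $\Omega$ is a strong determinate domain for \eqref{Sol1_7}, the positive characteristic through every interior point traces back to $\widehat{M'P'}$ within a uniformly bounded $t$-interval. Integrating the linear ODE for $E$ along this characteristic and applying Gronwall's inequality will yield a uniform bound $|E|\le C$ throughout $\Omega$, and therefore $|W|$ is uniformly bounded up to the degenerate line $\widehat{M'N'}$. The principal obstacle is arranging the exact cancellation of the $D/t^{2}$ singularity in the equation for $E$; this requires both the commutator identity and the $O(t^{2})$ decay of $\tilde\Lambda_+-\tilde\Lambda_-$, a structural feature inherited from the van der Waals pressure law through the relation $f(t,\tau)/\mu^{2}(\tau)\to 1$ as $t\to 0$.
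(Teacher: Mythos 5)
Your proposal is correct and takes essentially the same route as the paper: your quantity $E=(\widetilde X-\widetilde Y)/t$ is exactly twice the paper's auxiliary variable $\widetilde W=(\widetilde X-\widetilde Y)/(2t)=W/(XY)$, and your transport equation for $E$ — with the exact cancellation of the $D/t^{2}$ singularity and the source term $-(\tilde\Lambda_+-\tilde\Lambda_-)\widetilde Y_z/t=O(t^{1-\nu})$ controlled via Lemma~\ref{Sol_3_l3} — is precisely the equation \eqref{Sol4_24} used in the paper. The only cosmetic differences are that you integrate along the positive rather than the negative characteristic and phrase the final step as Gronwall, neither of which changes the argument.
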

\begin{proof}
    Define
    \begin{equation}\label{Sol4_23}
        \widetilde W=\frac{\widetilde X-\widetilde Y}{2t}=\frac{W}{ X Y},
    \end{equation}
    which implies together with \eqref{l11}, it suffices to prove that $\widetilde W$ is uniformly bounded. Using relations \eqref{Sol3_5} and \eqref{Sol4_14}, we can directly derive the governing equation satisfied by $\widetilde W$
    \begin{equation}\label{Sol4_24}
      \begin{split}
          \begin{cases}
               \tilde \partial_+ \widetilde W&=(H_{11}-H_{21})\widetilde W+\frac{1}{2}(H_{12}-H_{22})-\frac{1}{2}t^{1-\nu}\widetilde \Theta,\\
               \tilde \partial_- \widetilde W&=(H_{11}-H_{21})\widetilde W+\frac{1}{2}(H_{12}-H_{22})-\frac{1}{2}t^{1-\nu}\widetilde \Xi.
          \end{cases}
      \end{split}
    \end{equation}
   Note from (\ref{Sol1_3}) and (\ref{Sol1_4}) that the boundary values of $\widetilde W$ on $\widehat{M'P'}$ are given by 
   \begin{equation*}\label{Sol4_25}
       \widetilde W=\frac{\hat d}{\hat a\hat b}(z),
   \end{equation*}
   which is uniformly bounded. The uniform boundedness of $\widetilde W$ then follows by integrating the equation satisfied by $\widetilde W$ along the negative characteristic curve from an arbitrary point $(z,t)\in \Omega$ to the boundary $\widehat{M'P'}$, together with the already established uniform bounds on $H_{ij}$ and $(\widetilde\Xi, \widetilde\Theta)$. This completes the proof of the lemma.
\end{proof}
\begin{corollary}\label{Sol_3_c1}
The functions $X$ and $Y$ coincide along the degenerate curve $\widehat{M'N'}$. $X$ and $Y$ converge to this common limit on $\widehat{M'N'}$ at least at a linear rate with respect to $t$.
\end{corollary}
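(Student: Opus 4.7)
The proof plan is to extract the corollary directly from Lemma \ref{Sol_3_l4}. Recalling the definition $W = (X+Y)/(2\cos\omega)$ introduced in Section 2 and the identity $\cos\omega = t$ in the hodograph plane (see \eqref{Sol1_1}), one has
\[
X(z,t) + Y(z,t) = 2t\, W(z,t).
\]
Lemma \ref{Sol_3_l4} supplies a uniform constant $C > 0$ such that $|W(z,t)| \leq C$ on $\Omega$ all the way up to the degenerate curve $\widehat{M'N'}$. Inserting this bound into the displayed identity immediately yields
\[
|X(z,t) + Y(z,t)| \leq 2C\,t \qquad \forall\, (z,t) \in \Omega,
\]
so that $X+Y$ vanishes on $\widehat{M'N'}$ and does so at least linearly in $t$.

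To upgrade this to individual vanishing of $X$ and $Y$, I would appeal to the uniform sign and magnitude bounds inherited from Lemma \ref{Sol_3_l1} through the algebraic link \eqref{Pre2_9}: the variables $X$ and $Y$ are of the same sign and uniformly bounded throughout $\Omega$, as already used repeatedly in the estimates leading to Lemma \ref{Sol_3_l4}. Combined with $X+Y = O(t)$, this forces each of $|X|$ and $|Y|$ to be controlled by a constant multiple of $t$, so that $X$ and $Y$ both tend to $0$ as $t\to 0^{+}$. Consequently, $X$ and $Y$ coincide along $\widehat{M'N'}$ with common limit zero, and the convergence is at least linear with respect to $t$, which is exactly the assertion of the corollary.

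The argument contains no substantive obstacle: the nontrivial work has already been carried out in Lemma \ref{Sol_3_l4}, and the corollary is a short algebraic reading of that bound, together with the uniform sign and magnitude control of $X$ and $Y$ inherited from Lemma \ref{Sol_3_l1} and \eqref{Pre2_9}. The only point requiring minor care is ensuring that the uniform non-negativity of $X$ and $Y$ propagates from the boundary $\widehat{M'P'}$ into the entire determinate domain $\Omega$, which is transparent from the a priori bounds already established.
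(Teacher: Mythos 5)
Your opening computation is exactly the paper's (implicit) proof of this corollary: from $W=(X+Y)/(2\cos\omega)$ in \eqref{Pre2_11} and $t=\cos\omega$ in \eqref{Sol1_1} one gets $X+Y=2tW$, and Lemma \ref{Sol_3_l4} then gives $|X+Y|\le 2t\sup_{\Omega}|W|$, so the two normalized characteristic derivatives merge on $t=0$ at a linear rate. You should stop there. The second half of your argument, upgrading this to ``$X$ and $Y$ both tend to $0$ with common limit zero,'' is wrong and would contradict the rest of Section~3. By \eqref{Pre2_9} we have $\overline X=A\bar\omega\sqrt{1+\kappa\bar\omega^2}\,X$ and $\overline Y=-A\bar\omega\sqrt{1+\kappa\bar\omega^2}\,Y$, and Lemma \ref{Sol_3_l1} gives $\overline X,\overline Y\ge \hat m_0/2$; since $\bar\omega=\sqrt{1-t^2}\to1$, the prefactor is bounded above and below near $t=0$, so $X$ and $-Y$ are each bounded \emph{below} by a positive constant all the way to the degenerate line. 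Hence $X$ and $Y$ have opposite signs, neither vanishes, and $X+Y=O(t)$ expresses that $X$ and $-Y$ (equivalently $\overline X$ and $\overline Y$, or $\widetilde X=1/X$ and $\widetilde Y=-1/Y$) approach a common \emph{nonzero} limit. This is precisely how the corollary is invoked at \eqref{Sol4_29}, where $\widetilde X(z_1,0)=\widetilde Y(z_1,0)$ is used with $\widetilde X,\widetilde Y$ finite; if $X\to0$ then $\widetilde X\to\infty$, contradicting Lemma \ref{Sol_3_l2} and \eqref{Sol4_28}.

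The specific step that fails is your appeal to ``same sign plus $X+Y=O(t)$ forces each to vanish.'' That inference is valid only if both quantities are nonnegative \emph{without} a positive lower bound; but the whole construction rests on the lower bound of Lemma \ref{Sol_3_l1}, which (through \eqref{Pre2_9}) forces opposite signs for $X$ and $Y$ and excludes vanishing. (You were likely misled by \eqref{Pre4_4}--\eqref{Pre4_7}, where the paper writes $\hat a,\hat b\ge\hat m_0>0$ alongside $\hat a+\hat b=2\sqrt{1-\hat{\bar\omega}^2}\,\hat d$; taken literally these are incompatible at the sonic endpoint, and the signs that make Section~3 cohere are the ones fixed by \eqref{Pre2_9} and Lemma \ref{Sol_3_l1}.) So the corollary's phrase ``$X$ and $Y$ coincide'' must be read, as the later estimates do, as the coincidence of the pair $(\overline X,\overline Y)$ (equivalently $(\widetilde X,\widetilde Y)$) on $\widehat{M'N'}$, with the $O(t)$ rate supplied by your first display; no claim that $X$ or $Y$ itself vanishes is intended, and none is true.
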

Hence, based on the solution properties stated in Corollary \ref{Sol_3_c1}, we are able to establish uniform regularity for $( X, Y, W)$.
\begin{lemma}\label{Sol_3_l5}
    The functions $( X, Y,W)$ are uniformly $C^{1/3}$ continuous throughout the entire domain $\Omega$, including the degenerate line $\widehat{M'N'}$.
\end{lemma}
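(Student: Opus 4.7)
The strategy combines three ingredients: a uniform Lipschitz bound in the $t$-direction, the controlled blowup of the $z$-derivative from Lemma~\ref{Sol_3_l3}, and the linear rate of convergence to a common boundary trace from Corollary~\ref{Sol_3_c1}.

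First I would prove $|X_t|,|Y_t|\le C$ uniformly on $\Omega$. Writing $\widetilde X_t=\tilde\partial_+\widetilde X-\tilde\Lambda_+\widetilde X_z$, the right-hand side of the first equation in \eqref{Sol3_5} is uniformly bounded, since $\widetilde W=(\widetilde X-\widetilde Y)/(2t)$ is bounded by Lemma~\ref{Sol_3_l4} and the coefficients $H_{ij}$ are bounded. With $|\tilde\Lambda_+|=O(t^2)$ and $|\widetilde X_z|\le C/\sqrt{t}$ (applying Lemma~\ref{Sol_3_l3} with $\nu=1/2$), the correction $\tilde\Lambda_+\widetilde X_z=O(t^{3/2})$ is bounded, so $|\widetilde X_t|\le C$; since $\widetilde X=1/X$ is bounded away from zero, also $|X_t|\le C$, and identically for $Y_t$.

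Next I would establish Hölder regularity of the common boundary trace $X_\ast(z):=\lim_{t\to 0^+}X(z,t)=\lim_{t\to 0^+}Y(z,t)$ provided by Corollary~\ref{Sol_3_c1}. Combining the bounds $|X(z_i,t)-X_\ast(z_i)|\le Ct$ with $|X(z_1,t)-X(z_2,t)|\le C|z_1-z_2|/\sqrt{t}$ from Lemma~\ref{Sol_3_l3}, the triangle inequality yields $|X_\ast(z_1)-X_\ast(z_2)|\le 2Ct+C|z_1-z_2|/\sqrt{t}$, and optimizing over $t\sim|z_1-z_2|^{2/3}$ gives the boundary Hölder estimate $|X_\ast(z_1)-X_\ast(z_2)|\le C|z_1-z_2|^{2/3}$.

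For two arbitrary points $P_1=(z_1,t_1),P_2=(z_2,t_2)\in\overline{\Omega}$ with $r:=|P_1-P_2|$ and (without loss of generality) $t_1\le t_2$, I would split into two regimes. If $t_2\ge r^{1/3}$, applying the $t$-Lipschitz bound and the $z$-derivative bound on the slice $\{t=t_2\}$ yields
\begin{equation*}
|X(P_1)-X(P_2)|\le C|t_1-t_2|+\frac{C|z_1-z_2|}{\sqrt{t_2}}\le Cr+C\,r^{5/6}\le Cr^{1/3}
\end{equation*}
for $r$ small. If instead $t_2<r^{1/3}$, routing the comparison through the boundary trace gives $|X(P_1)-X(P_2)|\le C(t_1+t_2)+|X_\ast(z_1)-X_\ast(z_2)|\le 2Cr^{1/3}+Cr^{2/3}\le Cr^{1/3}$. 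The same argument handles $Y$; for $W$ I would use the equations \eqref{Sol4_24} together with the identity $W=XY\,\widetilde W$, exploit the uniform boundedness of $\widetilde\Xi,\widetilde\Theta$ from the proof of Lemma~\ref{Sol_3_l3}, and invoke the just-proved Hölder continuity of $X,Y$. The main obstacle is the regime in which both $P_1,P_2$ lie very close to $\widehat{M'N'}$, where the direct $z$-derivative bound diverges; the resolution is to factor the comparison through the more regular boundary trace $X_\ast$, and the exponent $1/3$ emerges from balancing the linear vanishing $|X-X_\ast|\le Ct$ against the $C^{2/3}$ regularity of the trace.
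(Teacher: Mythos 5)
Your proposal is correct in its essentials and reaches the stated conclusion, but it takes a genuinely different route to the key boundary-trace estimate. The paper erects a characteristic triangle over $(z_1,0)$ and $(z_2,0)$: the two characteristics meet at an apex $(z_m,t_m)$ with $t_m\sim|z_1-z_2|^{1/3}$ (relation \eqref{Sol4_27}), and the trace difference is bounded by the integrated characteristic derivatives along the two legs plus the switch $|\widetilde X-\widetilde Y|\le 2t_m\sup|\widetilde W|$ at the apex, using $\widetilde X=\widetilde Y$ on $t=0$ from Corollary \ref{Sol_3_c1}; this yields only $|z_1-z_2|^{1/3}$ for the trace. You instead go vertically to a height $t$ chosen by optimization and horizontally across the slice, balancing the linear vanishing $|X-X_\ast|\le Ct$ against $|\widetilde X_z|\le Ct^{-1/2}$ (Lemma \ref{Sol_3_l3} with $\nu=1/2$); with $t\sim|z_1-z_2|^{2/3}$ this gives the strictly stronger trace modulus $C^{2/3}$, which anticipates the improvement the paper only obtains later in Lemma \ref{Sol_3_l6}. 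Your interior case split ($t$ above or below a power of the separation, routing through the trace in the degenerate regime) is the same device as the paper's, and your derivation of $|\widetilde X_t|\le C$ via $\widetilde X_t=\tilde\partial_+\widetilde X-\tilde\Lambda_+\widetilde X_z$ with $\tilde\Lambda_+=O(t^2)$ is exactly the paper's step \eqref{Sol4_28}. What the paper's triangle buys is structural economy: the points $(z_m,t_m)$ and \eqref{Sol4_27} are recycled verbatim in Lemma \ref{Sol_3_l6}. What yours buys is elementarity (no characteristic ODEs are needed at this stage, only already-established derivative bounds) and a better exponent on the trace for free.

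Two caveats. First, your intermediate points must stay in $\Omega$: since the left boundary satisfies $\tilde z(t)\gtrsim t^2$, a vertical segment above a point near $M'$ can exit the domain, so in the interior estimate you should traverse horizontally at the \emph{lower} of the two heights and then vertically (as the paper does), rather than vertically first; the slice at height $|z_1-z_2|^{2/3}$ used in your trace estimate has the same issue very near the corner $M'$, but the paper's triangle reaches the even greater height $|z_1-z_2|^{1/3}$, so this is a technicality shared by both arguments and handled by routing through the $C^1$ data on $\widehat{M'P'}$ when the construction exits there. Second, your treatment of $W$ is only sketched; since $\widetilde W$ carries an explicit $1/t$, its Hölder continuity does not follow from that of $\widetilde X,\widetilde Y$ alone and one must use \eqref{Sol4_24} (whose right-hand side is bounded, giving $|\widetilde W_t|\le C$, while $\widetilde W_z$ is controlled through $\widetilde\Xi,\widetilde\Theta$) — but this is at the same level of detail as the paper's own one-line remark that $\widetilde W$ "can be treated similarly".
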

\begin{proof}
    Let $A_{1}(z_{1},0)$ and $A_{2}(z_{2},0)$, with $z_{1}<z_{2}$, be two arbitrary points on the degenerate curve $\widehat{M'N'}$. From the point $(z_{1},0)$ we draw the positive characteristic and similarly draw the positive characteristic issuing from $(z_{2},0)$. Let us denote the intersection point of these two characteristic curves by $A_{m}(z_{m},t_{m})$. Recalling the expressions of $\tilde{\Lambda}_{\pm}$ given in (\ref{Sol3_3}), we can derive the corresponding relations satisfied by $t_{m}$ and $z_{m}$ as
    \begin{equation*}\label{Sol4_26}
        z_m=z_1-\int_0^{t_m} \frac{cft^2\widetilde X}{1-tg\widetilde X} dt=z_2+\int_0^{t_m} \frac{cft^2\widetilde Y}{1+tg\widetilde Y}dt,
    \end{equation*}
    which combined with (\ref{l11}), implies that
    \begin{equation}\label{Sol4_27}
        K_1t_m\leq |z_2-z_1|^{1/3}\leq K_2t_m,
    \end{equation}
    for some uniform positive constants $K_1$ and $K_2$. Since $\widetilde W$ is uniformly bounded, we may then apply (\ref{Sol3_5}) together with Lemma \ref{Sol_3_l3} to obtain
    \begin{equation}\label{Sol4_28}
        |\tilde\partial_+\widetilde X|, |\tilde\partial_-\widetilde Y|, |\widetilde X_t|, |\widetilde Y_t|, t^{1/2}|\widetilde X_z|, t^{1/2}|\widetilde Y_z|\leq {a_1},
    \end{equation}
    for some positive constant $a_{1}>0$. Therefore, by Corollary \ref{Sol_3_c1} together with (\ref{Sol4_27}) and (\ref{Sol4_28}), we obtain
    \begin{equation}\label{Sol4_29}
        \begin{split}
            |\widetilde X(z_1,0)-\widetilde X(z_2,0)|&=|\widetilde Y(z_1,0)-\widetilde X(z_2,0)|\\
            &\leq |\widetilde Y(z_1,0)-\widetilde Y(z_m,t_m)|+|\widetilde Y(z_m,t_m)-\widetilde X(z_m,t_m)|+|\widetilde X(z_m,t_m)-\widetilde X(z_2,0)|\\
            &\leq a_1t_m+2\sup_{\Omega}|\widetilde W|\cdot t_m+ a_1t_m\\
            &\leq \frac{2}{K_2}(a_1+\sup_{\Omega}|\widetilde W|)|z_2-z_1|^{1/3}.
        \end{split}
    \end{equation}
    Now consider any two points $A_{3}(z_{1},t_{1})$ and $A_{4}(z_{2},t_{2})$ with $z_{1}\le z_{2}$ and $0\le t_{1}\le t_{2}$ in the domain $\Omega$. If $t_{1}\ge (z_{2}-z_{1})$, then by (\ref{Sol4_28})
   \begin{equation}\label{Sol4_30}
        \begin{split}
            |\widetilde X(z_1,t_1)-\widetilde X(z_2,t_2)|&=|\widetilde X(z_1,t_1)-\widetilde X(z_2,t_1)|+|\widetilde X(z_2,t_1)-\widetilde X(z_2,t_2)| \\
            &\leq \sup_{\Omega}|\widetilde X_z|\cdot |z_1-z_2|+\sup_{\Omega}|\widetilde X_t|\cdot|t_1-t_2|\\
            & \leq\frac{a_1}{t^{1/2}}|z_1-z_2|+a_1|t_1-t_2|\\
            &\leq a_1|z_1-z_2|^{1/2}+a_1|t_1-t_2|\\
            &\leq 2a_1|(z_1,t_1)-(z_2,t_2)|^{1/3},
        \end{split}
    \end{equation}
    and if $t_{1}<(z_{2}-z_{1})$, then by (\ref{Sol4_28}) and (\ref{Sol4_29}) we again obtain
     \begin{equation}\label{Sol4_31}
        \begin{split}
            |\widetilde X(z_1,t_1)-\widetilde X(z_2,t_2)|=&|\widetilde X(z_1,t_1)-\widetilde X(z_1,0)|+|\widetilde X(z_1,0)-\widetilde X(z_2,0)|+|\widetilde X(z_2,0)-\widetilde X(z_2,t_1)|\\ &+|\widetilde X(z_2,t_1)-\widetilde X(z_2,t_2)|\\
           \leq  &\sup_{\Omega}|\widetilde X_t|\cdot t_1+ \frac{2}{K_2}(a_1+\sup_{\Omega}|\widetilde W|)|z_2-z_1|^{1/3}+               \sup_{\Omega}|\widetilde X_t|\cdot t_1+\sup_{\Omega}|\widetilde X_t|\cdot |t_1-t_2|\\
            \leq  &\left( 3a_1+ \frac{2}{K_2}(a_1+\sup_{\Omega}|\widetilde W|)\right)|(z_1,t_1)-(z_2,t_2)|^{1/3}.
        \end{split}
    \end{equation}
    Combining (\ref{Sol4_30}) and (\ref{Sol4_31}), we obtain the uniform $C^{1/3}$ continuity of $\widetilde X$ throughout the entire domain $\Omega$. An analogous argument establishes the same uniform regularity for $\widetilde Y$. The function $\widetilde W$ can be treated similarly by using (\ref{Sol4_23}). Finally, the conclusion of the lemma follows from the relations between $( X, Y,W)$ and $(\widetilde X,\widetilde Y,\widetilde W)$ together with Lemma \ref{Sol_3_l1}.
\end{proof}
As a consequence of Lemma \ref{Sol_3_l5}, the uniform regularity of $( X, Y,W)$ can be further enhanced.
\begin{lemma}\label{Sol_3_l6}
    The functions $( X, Y)(z,t)$ are uniformly $C^{1-\nu}$ continuous and $W(z,t)$ is uniformly $C^{(2-\nu)/3}$ continuous for any $\nu\in(0,1)$ throughout the domain $\Omega$ including the degenerate line $\widehat{M'N'}$.
\end{lemma}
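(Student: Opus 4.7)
The approach is to sharpen the uniform $C^{1/3}$ regularity from Lemma \ref{Sol_3_l5} into the stated exponents $1-\nu$ for $(X,Y)$ and $(2-\nu)/3$ for $W$, using the weighted $z$-derivative bounds from Lemma \ref{Sol_3_l3} as the new ingredient. First I would translate the rescaled estimate $t^\nu|\widetilde X_z|,\ t^\nu|\widetilde Y_z| \leq C_\nu$ together with the uniform two-sided bounds from Lemma \ref{Sol_3_l1} into analogous weighted bounds on the original variables, yielding $t^\nu|X_z|,\ t^\nu|Y_z| \leq C_\nu$; the system \eqref{Sol3_5} together with the characteristic operators in \eqref{Sol3_3} then gives uniform bounds $|X_t|,|Y_t| \leq C$ throughout $\Omega$.

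For the Hölder estimate of $(X,Y)$, I would use a box-detour argument. Given two arbitrary points $P_i = (z_i, t_i) \in \Omega$ with $t_1 \leq t_2$, I would introduce an intermediate height $t^\star \geq t_1$ and decompose
\begin{equation*}
X(P_1) - X(P_2) = [X(z_1,t_1) - X(z_1, t^\star)] + [X(z_1, t^\star) - X(z_2, t^\star)] + [X(z_2, t^\star) - X(z_2, t_2)].
\end{equation*}
The two vertical pieces are controlled by $|X_t| \leq C$ and the horizontal piece by integrating $|X_z| \leq C_\nu (t^\star)^{-\nu}$. Choosing $t^\star \sim |z_2 - z_1|^{1/(1+\nu)}$ to balance the two competing bounds yields the Hölder exponent $1/(1+\nu)$. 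Since $(1-\nu)(1+\nu) = 1-\nu^2 \leq 1$ implies $1/(1+\nu) \geq 1-\nu$, the stated $C^{1-\nu}$ regularity follows; the argument for $Y$ is identical.

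For $W$ I would work with the bounded auxiliary variable $\widetilde W = W/(XY)$ from \eqref{Sol4_23}. Its characteristic equations \eqref{Sol4_24} together with $|\widetilde \Xi|, |\widetilde \Theta| \leq C$ from Lemma \ref{Sol_3_l3} yield $|\tilde\partial_\pm \widetilde W| \leq C$; combined with $\tilde\Lambda_+ - \tilde\Lambda_- \sim t^2$ this gives $|\widetilde W_t| \leq C$ and $|\widetilde W_z| \leq C\, t^{-(1+\nu)}$. The stated exponent $(2-\nu)/3$ is then obtained by coupling a box-detour argument on $\widetilde W$ with the cubic characteristic scaling $t_m \sim |z_2 - z_1|^{1/3}$ from the proof of Lemma \ref{Sol_3_l5}, and transferring the gain back to $W = XY\widetilde W$ via the newly established $C^{1-\nu}$ regularity of $X, Y$ together with Lemma \ref{Sol_3_l1}. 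The main obstacle lies in this last coupling: a plain box-detour alone would only deliver the exponent $1/(2+\nu) < (2-\nu)/3$, so the interior derivative blowup rates for $\widetilde W$ have to be combined with the cubic characteristic scaling on the degenerate boundary in a single carefully optimized balance in order to recover the sharp Hölder exponent $(2-\nu)/3$.
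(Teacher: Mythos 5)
Your treatment of $(X,Y)$ is a legitimate alternative to the paper's route: the balanced box-detour with $t^\star\sim|z_2-z_1|^{1/(1+\nu)}$ does deliver the exponent $1/(1+\nu)\ge 1-\nu$ using only $|X_t|\le C$ and $t^\nu|X_z|\le C_\nu$ from Lemma \ref{Sol_3_l3}, whereas the paper proceeds in the opposite order (first upgrading $\widetilde W$ to $C^{(2-\nu)/3}$, then integrating the $\widetilde X_t$ equation \eqref{Sol4_41} down the two vertical legs of the characteristic triangle of height $t_m\sim|z_2-z_1|^{1/3}$, precisely because the $\widetilde W$-difference appears in that integrand). Your version avoids that dependence and is more elementary; the only caveat, which the paper is equally silent about, is that vertical segments can exit the curvilinear domain $\Omega$ near the lateral boundaries, so the detour path must be adjusted there.

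The $W$ part, however, has a genuine gap. You correctly observe that a plain box-detour caps out at $1/(2+\nu)<(2-\nu)/3$, but the proposed remedy --- ``coupling'' the box-detour with the cubic scaling $t_m\sim|z_2-z_1|^{1/3}$ --- does not close it: the cubic scaling only improves the \emph{horizontal} leg (at height $t_m$ one gets $|\widetilde W_z|\,|z_2-z_1|\lesssim t_m^{-1-\nu}|z_2-z_1|\sim|z_2-z_1|^{(2-\nu)/3}$, which is indeed where the exponent comes from), while the two \emph{vertical} legs, estimated separately via $|\widetilde W_t|\le C$, each cost $O(t_m)=O(|z_2-z_1|^{1/3})$, strictly worse than $(2-\nu)/3$ for every $\nu\in(0,1)$. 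No choice of intermediate height fixes this if the three pieces are bounded independently. The missing idea, which is the heart of the paper's proof, is to estimate the \emph{difference} of the two vertical integrals rather than each one: writing $\widetilde W(z_i,0)-\widetilde W(z_i,t_m)=-\int_0^{t_m}\widetilde W_t(z_i,t)\,dt$ with $\widetilde W_t$ given by \eqref{Sol4_34}, the integrand difference at $z_1$ versus $z_2$ is controlled by the already-established $C^{1/3}$ continuity of $(\widetilde X,\widetilde Y,\widetilde W)$ from Lemma \ref{Sol_3_l5} (contributing $|z_2-z_1|^{1/3}\,t_m\sim|z_2-z_1|^{2/3}$) together with the $t^{1-\nu}\widetilde\Theta$ source term (contributing $t_m^{2-\nu}\sim|z_2-z_1|^{(2-\nu)/3}$); see \eqref{Sol4_36}--\eqref{Sol4_40}. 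This bootstrap from Lemma \ref{Sol_3_l5} is indispensable and is absent from your sketch. A minor bookkeeping point at the very end: since $(2-\nu)/3>1-\nu$ when $\nu>1/2$, transferring the estimate to $W=XY\widetilde W$ requires invoking the $C^{1-\nu'}$ regularity of $X,Y$ with a smaller $\nu'$, so that the product retains the exponent $(2-\nu)/3$.
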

 \begin{proof}
     It is sufficient to prove that $(\widetilde X, \widetilde Y)$ are uniformly $C^{1-\nu}$ continuous and that $W$ is uniformly $C^{(2-\nu)/3}$ continuous for any $\nu\in(0,1)$. Let $M^{\ast}$ denote a uniform positive constant such that, for $i,j=1,2$,
     \begin{equation}\label{Sol4_32}
         \left | \frac{cf\widetilde X}{1-tg\widetilde X}\right|, \left | \frac{cf\widetilde Y}{1+tg\widetilde Y}\right|, |2tH_{ij}+1|, |\widetilde W|, |H_{ij\widetilde X}|, |H_{ij\widetilde Y}|,|H_{ij\widetilde Y}c_z|\leq M^*.
     \end{equation}
     The inequalities above follow from Lemma \ref{Sol_3_l1} together with the explicit formulas for $f$, $g$ and $H_{ij}$ given in (\ref{Sol3_6}) and (\ref{Sol1_10}). Furthermore, by Lemma \ref{Sol_3_l3}, there exists a uniform positive constant $a_{2}$, depending on the parameter $\nu$, such that
     \begin{equation}\label{Sol4_33}
         |\widetilde\Xi|, |\widetilde\Theta|, t^\nu|\widetilde X_z|, t^\nu|\widetilde Y_z|\leq a_2.
     \end{equation}
     We begin by refining the uniform regularity of $\widetilde W$. Recalling the first equation satisfied by $\widetilde W$ in (\ref{Sol4_24}), we obtain
     \begin{equation}\label{Sol4_34}
         \widetilde W_t=-\frac{cft\widetilde Y}{2(1+tg\widetilde Y)}(\widetilde X_z-\widetilde Y_z) +(H_{11}-H_{21})\widetilde W+\frac{1}{2}(H_{12}-H_{22})-\frac{1}{2}t^{1-\nu}\widetilde \Theta.
     \end{equation}
     Integrating (\ref{Sol4_34}) along the segment from $(z_{i},0)$ to $(z_{i},t_{m})$ for $i=1,2$ yields
     \begin{equation}\label{Sol4_35}
          \widetilde W(z_i,0)=\widetilde W(z_i,t_m)-\int_0^t\left(-\frac{cft\widetilde Y}{2(1+tg\widetilde Y)}(\widetilde X_z-\widetilde Y_z) +(H_{11}-H_{21})\widetilde W+\frac{1}{2}(H_{12}-H_{22})-\frac{1}{2}t^{1-\nu}\widetilde \Theta \right)dt.
     \end{equation}
     By using (\ref{Sol4_35}) we obtain
     \begin{equation}\label{Sol4_36}
          \begin{split}
              |\widetilde W(z_1,0)-\widetilde W(z_2,0)|&\leq |\widetilde W(z_1,t_m)-\widetilde W(z_2,t_m)| \\&+\int_0^{t_m}\left(\left|\frac{cf\widetilde Y (\widetilde X_z t^\nu-\widetilde Y_z t^\nu)}{2(1+tg\widetilde Y)}(z_1,t) \right|+\left|\frac{cf\widetilde Y (\widetilde X_z t^\nu-\widetilde Y_z t^\nu)}{2(1+tg\widetilde Y)}(z_2,t) \right|\right)t^{1-\nu}dt\\
              &+\int_0^{t_m} |(H_{11}-H_{21})(z_1,t)|\cdot |\widetilde W(z_1,t)-\widetilde W(z_2,t)| dt \\
              &+ \int_0^{t_m}  |(H_{11}-H_{21})(z_1,t)-(H_{11}-H_{21})(z_2,t)|\cdot |\widetilde W(z_2,t)|dt\\
              &+\int_0^{t_m} \frac{1}{2}|(H_{12}-H_{22})(z_1,t)-(H_{12}-H_{22})(z_2,t)|dt+\int_0^{t_m}\frac{1}{2}\left(|\widetilde \Theta(z_1,t)|+|\widetilde \Theta(z_2,t)| \right)t^{1-\nu} dt.
          \end{split}
     \end{equation}
     Consider the leading term on the right hand side of (\ref{Sol4_36}). Using the estimates provided in (\ref{Sol4_33})  and (\ref{Sol4_27}), we deduce that
     \begin{equation}\label{Sol4_37}
         \begin{split}
             |\widetilde W(z_1,t_m)-\widetilde W(z_2,t_m)|&\leq \frac{1}{t_m}\left( \left|\widetilde X(z_1,t_m)-\widetilde X(z_2,t_m)\right|+\left|\widetilde Y(z_1,t_m)-\widetilde Y(z_2,t_m)\right|\right)\\
             &\leq  t_{m}^{-1-\nu}\left(\left|t_m^\nu\widetilde X(z',t_m)\right|\cdot|z_1-z_2|+ \left|t_m^\nu\widetilde Y(z'',t_m)\right|\cdot|z_1-z_2|\right)\\
             &\leq 2a_2t_{m}^{-1-\nu}|z_1-z_2|\leq 2a_2 \;K_2^{1+\nu}|z_1-z_2|^\frac{2-\nu}{3}.
         \end{split}
     \end{equation}
     Furthermore, since the functions $(\widetilde X,\widetilde Y,\widetilde W)(z,t)$ possess uniform $C^{1/3}$ continuity as established in Lemma \ref{Sol_3_l5}, it follows that
     \begin{equation}\label{Sol4_38}
       |\widetilde X(z_1,t)-\widetilde X(z_2,t)|, |\widetilde Y(z_1,t)-\widetilde Y(z_2,t)|, |\widetilde W(z_1,t)-\widetilde W(z_2,t)|\leq a_3|z_1-z_2|^{1/3},
     \end{equation}
     for a certain uniform positive constant $a_3$. Consequently, combining (\ref{Sol4_32}) and (\ref{Sol4_38}), we conclude that for $i=1,2$,
     \begin{equation*}\label{Sol4_39}
         \begin{split}
             \left| (H_{1i}-H_{2i})(z_1,t)-(H_{1i}-H_{2i})(z_2,t)\right| &\leq |H_{1i\widetilde X}-H_{2i\widetilde X}|\cdot|\widetilde X(z_1,t)-\widetilde X(z_2,t)|+ |H_{1i\widetilde Y}-H_{2i\widetilde Y}|\\& \cdot|\widetilde Y(z_1,t)-\widetilde Y(z_2,t)|+|H_{1ic}-H_{2ic}|\cdot c_z\cdot |z_1-z_2|\\
             &\leq 4M^*a_3|z_1-z_2|^{1/3}+2M^*|z_1-z_2|.
         \end{split}
     \end{equation*}
     We now substitute (\ref{Sol4_37}) and (\ref{Sol4_38}) into (\ref{Sol4_36}) and then apply the bounds given in  (\ref{Sol4_27}), (\ref{Sol4_32}) and (\ref{Sol4_33}) to obtain
     \begin{equation}\label{Sol4_40}
         \begin{split}
             |\widetilde W(z_1,0)-\widetilde W(z_2,0)|\leq &2a_2 \;K_2^{1+\nu}|z_1-z_2|^\frac{2-\nu}{3}+\int_0^{t_m} a_2(2M^*+1)t^{1-\nu} dt\\
             &+ \int_0^{t_m} 2M^* a_3|z_1-z_2|^{1/3} dt+\int_0^{t_m} \left(M^*+\frac{1}{2}\right)\left(4M^*a_3|z_1-z_2|^{1/3}+2M^*|z_1-z_2|\right)dt\\
             \leq & 2a_2 \;K_2^{1+\nu}|z_1-z_2|^\frac{2-\nu}{3}+\frac{2M^*+1}{2-\nu}a_2t_m^{2-
             \nu}+2M^*a_3|z_1-z_2|^{1/3}t_m\\
             &+ (M^*+1)\left(4M^*a_3|z_1-z_2|^{1/3}+2M^*|z_1-z_2| \right)t_m\\
             \leq & a_4 |z_1-z_2|^{\frac{2-\nu}{3}},
         \end{split}
     \end{equation}
     for a uniform positive constant $a_4$ that depends on $\nu$. In view of (\ref{Sol4_40}), the same argument employed in Lemma \ref{Sol_3_l5} can be repeated to conclude that the function $\widetilde W$ is uniformly $C^{(2-\nu)/3}$ continuous throughout the entire domain $\Omega$.

     For the functions $\widetilde X$ and $\widetilde Y$, we return to (\ref{Sol3_5}) to obtain
     \begin{equation}\label{Sol4_41}
         \begin{split}
             \begin{cases}
                 \widetilde X_t =-\frac{cft^2\widetilde Y}{1+tg\widetilde Y}\widetilde X_z+(2tH_{11}+1)\widetilde W+H_{12}t,\\
                 \widetilde Y_t =\frac{cft^2\widetilde X}{1-tg\widetilde X}\widetilde Y+(2tH_{21}-1)\widetilde W+H_{22}t.
             \end{cases}
         \end{split}
     \end{equation}
     Integrating the equation satisfied by $\widetilde X$ in (\ref{Sol4_41}) along the path from $(z_{i},0)$ to $(z_{i},t_{m})$ for $i=1,2$, we obtain
     \begin{equation*}\label{Sol4_42}
         \widetilde X(z_i,0)=\widetilde X(z_i,t_m)-\int_0^{t_m} \left(-\frac{cft^2\widetilde Y}{1+tg\widetilde Y}+(2tH_{11}+1)\widetilde W+H_{12}t\right)(z_i,t)dt,
     \end{equation*}
     from which it follows that
     \begin{equation}\label{Sol4_43}
         \begin{split}
             |\widetilde X(z_1,0)-\widetilde X(z_2,0)|\leq & |\widetilde X(z_1,t_m)-\widetilde X(z_2,t_m)|+\int_0^{t_m} |H_{12}(z_1,t)-H_{12}(z_2,t)|tdt\\
              +\int_0^{t_m}&\left( \left|\frac{cf\widetilde Y}{1+tg\widetilde Y}(z_1,t)\right| \cdot|t^\nu\widetilde X_z(z_1,t)|+ \left|\frac{cf\widetilde Y}{1+tg\widetilde Y}(z_2,t)\right| \cdot|t^\nu\widetilde X_z(z_2,t)|\right)t^{2-\nu}dt\\
              +\int_0^{t_m}&\left( |2tH_{11}(z_1,t)+1|\cdot|\widetilde W(z_1,t)-\widetilde W(z_2,t)|+2t|\widetilde W|+2t|\widetilde W|\cdot|H_{11}(z_1,t)-H_{11}(z_2,t)| \right)dt.
         \end{split}
     \end{equation}
     By making use of the bounds in (\ref{Sol4_32}) and (\ref{Sol4_33}), we conclude that
     \begin{equation}\label{Sol4_44}
         \begin{split}
             |\widetilde X(z_1,t_m)-\widetilde X(z_2,t_m)|&=|\widetilde X_z(z',t_m)|\cdot|z_1-z_2|\leq a_2t_m^{-\nu} |z_1-z_2|\leq a_2{K_2}^\nu |z_1-z_2|^{1-\nu}.
         \end{split}
     \end{equation}
     Also, we have
     \begin{equation*}\label{Sol4_45}
         \left|\frac{cf\widetilde Y}{1+tg\widetilde Y}(z_1,t)\right| \cdot|t^\nu\widetilde X_z(z_1,t)|+ \left|\frac{cf\widetilde Y}{1+tg\widetilde Y}(z_2,t)\right| \cdot|t^\nu\widetilde X_z(z_2,t)|\leq 2M^*a_2.
     \end{equation*}
     Moreover, (\ref{Sol4_32}) implies that for $i=1,2$,
     \begin{equation*}\label{Sol4_46}
         \begin{split}
             |H_{1i}(z_1,t)-H_{1i}(z_2,t)|\leq& |H_{1i\widetilde X}|\cdot |\widetilde X(z_1,t)-\widetilde X(z_2,t)|+|H_{1i\widetilde Y}|\cdot |\widetilde Y(z_1,t)-\widetilde Y(z_2,t)|\\
             &+|H_{1i\tilde c}|\cdot |\tilde c(z_1,t)-\tilde c(z_2,t)|\\
             \leq &\left(2a_2t^{-\nu}+1 \right)M^*|z_1-z_2|.
         \end{split}
     \end{equation*}
     As a consequence of the uniform $C^{(2-\nu)/3}$ continuity of $\widetilde W$, it follows that
     \begin{equation}\label{Sol4_47}
         |\widetilde W(z_1,t)-\widetilde W(z_2,t)|\leq a_5|z_1-z_2|^{\frac{2-\nu}{3}},
     \end{equation}
     for a uniform positive constant $a_5$ depending on $\nu$. Employing \eqref{Sol4_44}-\eqref{Sol4_47} into \eqref{Sol4_43} and then using \eqref{Sol4_27} and \eqref{Sol4_32}, we conclude that
     \begin{equation}\label{Sol4_48}
         \begin{split}
              |\widetilde X(z_1,0)-\widetilde X(z_2,0)|&\leq a_2K_2^\nu|z_1-z_2|^{1-\nu}+2M^*a_2\int_0^{t_m} t^{2-\nu} dt\\
              &+M^*\int_0^{t_m} \left(a_5|z_1-z_2|^{\frac{2-\nu}{3}}+t(2M^*+1)(2a_2t^{-\nu}+1)|z_1-z_2| \right)dt\\
              &= a_2K_2^\nu|z_1-z_2|^{1-\nu}+\frac{2M^*a_2}{3-\nu}t_m^{3-\nu} +M^*a_5|z_1-z_2|^{\frac{2-\nu}{3}}t_m+(2M^*+1)(2a_2+1)M^*|z_1-z_2|\\&\leq a_6|z_1-z_2|^{1-\nu},
         \end{split}
     \end{equation}
     for a uniform positive constant $a_6$. Based on (\ref{Sol4_48}), arguments analogous to those used in Lemma \ref{Sol_3_l5} can be applied to conclude that the function $\widetilde X$ is uniformly $C^{1-\nu}$ continuous over the entire domain $\Omega$. By the same reasoning, the function $\widetilde Y$ also enjoys uniform $C^{1-\nu}$ continuity. This completes the proof of the lemma.
 \end{proof}
 Finally, we trace the positive characteristic issuing from the point $N'(\bar z(0),0)$ until it reaches the boundary $\widehat{M'P'}$ and denote the intersection point by $O'(\tilde z(t_{o}), t_{o})$. Choosing $\bar{\delta}=t_{o}$ completes the proof of Theorem \ref{Sol3_th1}.


\section{Regularity of solution in the self-similar plane}
Relying on the solution of the degenerate problem (\ref{Sol1_3}) and (\ref{Sol1_8}) formulated in the preceding section, we construct a regular supersonic solution to the boundary value problem (\ref{Pre2_3}) and (\ref{Pre3_2}) in the self-similar $(\xi,\eta)$ plane. This completes the proof of Theorem \ref{Pre_th1} in the present section.
     
\subsection{Inversion}

As a consequence of Theorem \ref{Sol3_th1}, the functions $(X,Y)(z,t)$ are obtained and defined throughout the entire region $M'N'O'$. In order to recover a solution in the self-similar $(\xi,\eta)$ plane, it is necessary to introduce the coordinate mappings $\xi=\xi(z,t)$ and $\eta=\eta(z,t)$ and to analyze their invertibility. Recalling the coordinate transformation (\ref{Sol1_1}) and applying (\ref{Pre2_11}) and (\ref{Pre2_12}), we obtain
\begin{equation}\label{Inv1_4}
    \xi_t=-\frac{c\sin\theta t}{(1-t^2)J},\; \eta_t=\frac{c\cos\theta t}{(1-t^2)J},\; \xi_z=\frac{\bar\omega_\eta}{J},\;\eta_z=-\frac{\bar\omega_\xi}{J}
\end{equation}
where
\begin{equation}\label{Inv1_5}
    \begin{split}
       \bar\omega_\xi &=\cos\theta\sqrt{1-t^2} (1+\kappa(\tau)(1-t^2)){W}+\frac{\cos\theta(1-t^2)}{c}-\sin\theta\frac{(1+\kappa(\tau)(1-t^2))}{2}(  X- Y),\\
       \bar \omega_\eta &=\sin\theta\sqrt{1-t^2}(1+\kappa(\tau)(1-t^2)){W}+\frac{\sin\theta(1-t^2)}{c}+\cos\theta\frac{(1+\kappa(\tau)(1-t^2))}{2}(  X- Y),\\
        J &=\phi_\xi\bar\omega_n-\phi_\eta\bar\omega_\xi=-\frac{c({ X}-{ Y})}{2t}(1+\kappa(\tau)(1-t^2)).
    \end{split}
\end{equation}
This observation implies that
\begin{equation}\label{Inv1_6}
    \partial_-\xi=-\frac{t^2f(\sqrt{1-t^2}\sin\theta +ct\cos\theta)}{Y-tg}, \; \partial_-\eta=\frac{t^2f(\sqrt{1-t^2}\cos\theta -ct\sin\theta)}{Y-tg}.
\end{equation}
For an arbitrary point $(z^0, t^0)$ in the region $M'N'O'$, we therefore obtain, by integrating (\ref{Inv1_5}) along the negative characteristic, that
\begin{equation*}\label{Inv1_7}
    \begin{split}
        \xi(z^0,t^0) &= \xi^*(\tilde z(t^*))+\int_{t^0}^{t^*}\frac{t^2f(\sqrt{1-t^2}\sin\theta +ct\cos\theta)}{Y-tg}(z_{-}(t; t^0, z^0),t)dt,\\
        \eta(z^0,t^0) &= \psi(\xi^*(\tilde z(t^*)))-\int_{t^0}^{t^*}  \frac{t^2f(\sqrt{1-t^2}\cos\theta -ct\sin\theta)}{Y-tg} (z_{-}(t; t^0, z^0),t)dt.
    \end{split}
\end{equation*}
Since $(z^0, t^0)$ is arbitrary, the relations in (\ref{Inv1_6}) define two functions $\xi=\xi(z,t)$ and $\eta=\eta(z,t)$ throughout the entire region $M'N'O'$.

Further, we construct the function $\theta(z,t)$. Making use of (\ref{Pre2_11}) and (\ref{Sol1_6}), we derive
\begin{equation}\label{Inv1_1}
    \partial_-\theta = -\frac{ft^2\sqrt{1-t^2}}{ Y-gt}\left(-\kappa(\tau)t\sqrt{1-t^2} Y
        +\frac{\bar\omega^2}{c}\right).
\end{equation}
For an arbitrary point $(z_0,t_0)$ in the region $M'N'O'$, we trace the negative characteristic curve $z=z_{-}(t; t_0, z_0)$ for $t\ge t_0$ until it intersects the boundary $M'O'$ at a unique point $(\tilde z(t^{\ast}), t^{\ast})$ satisfying
\begin{equation*}\label{Inv1_2}
    \begin{split}
        \begin{cases}
            \frac{dz_-}{dt}(t; t_0, z_0)&=-\frac{cft^2}{ Y-tg}(z_-(t; t_0, z_0),t),\\
            z_-(t_0; t_0, z_0)&=z_0,\\
            z_-(t^*; t_0, z_0)&=\tilde z(t^*).
        \end{cases}
    \end{split}
\end{equation*}
We integrate (\ref{Inv1_1}) along the negative characteristic $z=z_{-}(t; t_0, z_0)$, starting from the point $(z_0, t_0)$ and proceeding to the boundary point $(\tilde z(t^{\ast}), t^{\ast})$. By incorporating the prescribed boundary values of $\theta$ on $\widehat{M'O'}$, we obtain
\begin{equation*}\label{Inv1_3}
    \theta(z_0,t_0)=\theta^*(\xi^*(\tilde z(t^*)))+\int_{t_0}^{t^*}  \frac{ft^2\sqrt{1-t^2}}{ Y-gt}\left(-\kappa(\tau)t\sqrt{1-t^2} Y
        +\frac{\bar\omega^2}{c}\right)(z_{-}(t; t_0, z_0),t)dt.
\end{equation*}
As a result, the function $\theta(z,t)$ is well defined throughout the entire region $M'N'O'$. 

\subsection{Mapping $(z,t)\mapsto(\xi,\eta)$ is one to one}

We argue by contradiction. Suppose that there exist two distinct points $(\xi_{1},\eta_{1})$ and $(\xi_{2},\eta_{2})$ in the region $M'N'O'$ such that $t_{1}=t_{2}$ and $z_{1}=z_{2}$. This assumption implies that
$\cos\omega(\xi_{1},\eta_{1})=\cos\omega(\xi_{2},\eta_{2})$ and
$\phi(\xi_{1},\eta_{1})=\phi(\xi_{2},\eta_{2})$.
Consequently, both points $(\xi_{1},\eta_{1})$ and $(\xi_{2},\eta_{2})$ lie on the same level curve defined by
$l_{\vartheta}=1-\bar\omega(\xi,\eta)\ge 0$.

It then follows from (\ref{Inv1_4}) that
\begin{equation}\label{Inv1_8}
   \nabla\phi\cdot(\bar\omega_\eta, -\bar\omega_\xi) =-\frac{c({ X}-{ Y})}{2t}(1+\kappa(\tau)(1-t^2))<0.
\end{equation}
The inequality above shows that $\phi$ is strictly decreasing along each level curve satisfying $l_{\vartheta}=(1-\bar\omega)\ge 0$. This contradicts the assumption that $\phi(\xi_{1},\eta_{1})=\phi(\xi_{2},\eta_{2})$. Hence, the mapping $(z,t)\mapsto(\xi,\eta)$ is globally one to one.

\subsection{Proof of Theorem \ref{Pre_th1}}

Due to the global one to one nature of the mapping $(z,t)\mapsto(\xi,\eta)$, we may uniquely recover the inverse functions $t=\hat t(\xi,\eta)$ and $z=\hat z(\xi,\eta)$. Moreover, it also follows that
\begin{equation}\label{main_1}
    \hat t_\xi =\frac{\eta_z}{j},\; \hat t_\eta =-\frac{\xi_z}{j},\;
     \hat z_\xi =-\frac{\eta_t}{j},\; \hat z_\eta =\frac{\xi_t}{j},
\end{equation}
where $j=\xi_{t}\eta_{z}-\eta_{t}\xi_{z}$ and the pairs $(\xi_{t},\xi_{z})$ and $(\eta_{t},\eta_{z})$ are specified in (\ref{Inv1_4}). We now introduce the functions $(c,\theta,\bar\omega)$ expressed in the variables $(\xi,\eta)$ as follows
\begin{equation}\label{main_2}
    c=c(\hat z(\xi,\eta), \hat t(\xi,\eta)), \; \theta=\theta(\hat z(\xi,\eta), \hat t(\xi,\eta)),\; \bar\omega =\sqrt{1-\hat t^2(\xi,\eta)},\; \forall\; (\xi,\eta)\in \overline{MNO},
\end{equation}
where the region $MNO$ is enclosed by the curves $\widehat{MN}$, $\widehat{NO}$ and $\widehat{MO}$. In particular, the curves $\widehat{MN}$ and $\widehat{NO}$ are characterized by
\begin{equation}\label{main_4}
    \begin{split}
        \widehat{MN}&=\{(\xi,\eta)\;| \; \bar\omega(\xi,\eta)=1, \;\xi\in[\xi',\xi_2]\},\\
        \widehat{NO}&=\{(\xi,\eta)\;| \; \hat z(\xi,\eta)=z_+(\hat t(\xi,\eta); \bar\delta, \tilde z(\bar\delta)), \; \xi\in[\xi'',\xi']\},
    \end{split}
\end{equation}
where the point $\xi'=\xi(\bar z(0),0)$ and the function $z_{+}(t;\bar{\delta},\tilde z(\bar{\delta}))$ denotes the solution to the following ordinary differential equation
\begin{equation}\label{main_5}
    \begin{split}
        \begin{cases}
            \frac{dz_+}{dt}(t; \bar\delta, \tilde z(\bar{\delta}))&=\frac{cft^2 X}{ X+tg}(z_+(t; \bar\delta, \tilde z(\bar{\delta})),t),\;\;\;\;\; \;\;t\in[0,\bar\delta],\\
        z_+(\bar\delta; \bar\delta, \tilde z(\bar{\delta})) &=\tilde z(\bar\delta).
        \end{cases}
    \end{split}
\end{equation}
Also, $\xi''$ satisfies
\begin{equation*}\label{main_6}
    \hat z(\xi'',\psi(\xi''))=z_+(\tilde t(\xi'',\psi(\xi'')); \bar\delta,\tilde z(\bar\delta)).
\end{equation*}
In the self-similar plane, the points $N$ and $O$ have coordinates $(\xi',\eta(\bar z(0),0))$ and $(\xi'',\psi(\xi''))$, respectively. By the construction of the coordinate functions $(\xi(z,t),\eta(z,t))$, it follows that the functions $(c(\xi,\eta),\theta(\xi,\eta),\bar\omega(\xi,\eta))$ defined in  (\ref{main_2}) satisfy the boundary conditions prescribed in (\ref{Pre3_2}). Based on (\ref{main_2}), we further introduce the functions $(\alpha, \omega,\beta)(\xi,\eta)$ for all $(\xi,\eta) \in \overline{MNO}$ as follows
\begin{equation}\label{main_7}
    \alpha(\xi,\eta)=\theta(\xi,\eta)+\omega(\xi,\eta),\;
    \omega(\xi,\eta)=\arcsin\bar\omega(\xi,\eta),\;
    \beta(\xi,\eta)=\theta(\xi,\eta)-\omega(\xi,\eta).
\end{equation}
To address the regularity of the functions $(c(\xi,\eta),\theta(\xi,\eta),\bar\omega(\xi,\eta))$ defined in (\ref{main_2}), we state the following result.
\begin{lemma}\label{Main_l1}
    The functions $(c(\xi,\eta),\theta(\xi,\eta),\bar\omega(\xi,\eta))$ introduced in (\ref{main_2}) are uniformly $C^{1,\mu}$ continuous for any $\mu\in(0,1/3)$ over the entire region $MNO$. In addition, the sonic curve $MN$ is also $C^{1,\mu}$ continuous.
\end{lemma}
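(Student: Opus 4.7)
The plan is to transfer the uniform regularity of $(X,Y,W)$ established in Theorem \ref{Sol3_th1} from the hodograph plane back to the self-similar plane via the inverse of the mapping $(z,t)\mapsto(\xi,\eta)$, whose global one-to-one property has just been established. The first step is to compute the Jacobian $j=\xi_t\eta_z-\eta_t\xi_z$ explicitly. Substituting \eqref{Inv1_4}--\eqref{Inv1_5} and using the algebraic identity $\sin\theta\,\bar\omega_\xi-\cos\theta\,\bar\omega_\eta=-\tfrac12(1+\kappa(\tau)(1-t^{2}))(X-Y)$ together with the formula for $J$, one obtains after cancellation
\begin{equation*}
j=-\frac{2t^{3}}{c(1-t^{2})(1+\kappa(\tau)(1-t^{2}))(X-Y)}.
\end{equation*}
By Corollary \ref{Sol_3_c1}, $X-Y=O(t)$ as $t\to 0$, so $j\sim t^{2}$ near the degenerate line; consequently the inverse derivatives in \eqref{main_1} behave like $\hat t_\xi,\hat t_\eta=O(t^{-2})$ and $\hat z_\xi,\hat z_\eta=O(t^{-1})$.

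Using the chain rule on \eqref{main_2}, I would next derive expressions for $c_\xi,c_\eta,\theta_\xi,\theta_\eta,\bar\omega_\xi,\bar\omega_\eta$ and show that the singular factors $1/j$ are compensated by accompanying powers of $t$ appearing in the hodograph-plane derivatives. For $\bar\omega=\sqrt{1-\hat t^{2}}$ the outer factor $\hat t\sim t$ absorbs one power of $t^{-1}$, while the formulas $c_t\sim t$ from \eqref{Sol1_7} and $\theta_t\sim t^{2}$ from \eqref{Inv1_1} provide the required powers for $c$ and $\theta$; the residual divisor $X-Y$ in $j$ is cancelled by a matching $(X-Y)$ factor in the chain-rule expressions, traceable to the common structure of $\bar\omega_\xi,\bar\omega_\eta$. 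After these cancellations each partial derivative in $(\xi,\eta)$ becomes a bounded combination of $(X,Y,W,c,\theta)$ and uniformly bounded geometric coefficients, yielding $C^{1}$ continuity of $(c,\theta,\bar\omega)$ up to $\widehat{MN}$.

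To lift this to $C^{1,\mu}$ for $\mu\in(0,1/3)$, I would invoke the characteristic-cone scaling $K_{1}t_m\le|z_{2}-z_{1}|^{1/3}\le K_{2}t_m$ from Lemma \ref{Sol_3_l5} in conjunction with the $C^{1-\nu}$ estimate for $(X,Y)$ (Lemma \ref{Sol_3_l6}) and the $C^{(2-\nu)/3}$ estimate for $W$. Two nearby points in the physical plane are compared via characteristic tubes reaching into the hodograph plane up to a height $t_m\sim|z_{2}-z_{1}|^{1/3}$; the resulting effective Hölder exponent for the derivatives of $(c,\theta,\bar\omega)$ is of order $(1-\nu)/3$, so letting $\nu\downarrow 0$ realizes any $\mu<1/3$. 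The regularity of the sonic curve $\widehat{MN}$ itself is then obtained by parameterizing it as $z\mapsto(\xi(z,0),\eta(z,0))$ with tangent $(\bar\omega_\eta,-\bar\omega_\xi)/J\bigl|_{t=0}$ and applying the same characteristic-scaling argument to the boundary traces of $(X,Y,W)$ on $\{t=0\}$.

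The principal obstacle is managing the dual singularity: the Jacobian vanishes like $t^{2}$ while the $z$-derivatives of $(X,Y)$ blow up like $t^{-\nu}$ by Lemma \ref{Sol_3_l3}. Every product arising from the chain-rule differentiation of $c,\theta,\bar\omega$ must be verified to exhibit an explicit cancellation of the form $t^{k}\cdot t^{-k}$ \emph{before} any Hölder estimate is applied. Performing this bookkeeping once at the $C^{1}$ level and again at the Hölder level via the cube-root characteristic scaling is precisely what pins the final exponent at $\mu<1/3$, and any weakening of the interior regularity provided by Lemmas \ref{Sol_3_l3}--\ref{Sol_3_l6} would propagate directly into a worse exponent.
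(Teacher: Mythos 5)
Your route is genuinely different from the paper's, and as written it does not close. The paper never inverts the Jacobian to compute the physical derivatives of $(c,\theta,\bar\omega)$: the characteristic decomposition already supplies them in closed form. Equations \eqref{main_8} and \eqref{Inv1_5} (which come from \eqref{Pre2_2} and \eqref{Pre2_11}, not from \eqref{main_1}) express $\theta_\xi,\theta_\eta,\bar\omega_\xi,\bar\omega_\eta$ directly as bounded combinations of $(X,Y,W)$, $c$, $\theta$ and smooth coefficients, with no division by $j$ and hence no singular cancellation to track. The whole lemma then reduces to showing that $(X,Y,W)$, viewed as functions of $(\xi,\eta)$, are uniformly $C^\mu$ for $\mu<1/3$. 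For that the paper uses a two-line composition lemma: $z=\phi-\hat\phi(\xi_2)$ is Lipschitz in $(\xi,\eta)$ (estimate \eqref{main_9}), while $t$ is only $C^{1/2}$ in $(\xi,\eta)$ because $t^2=1-\bar\omega^2$ with $\bar\omega$ Lipschitz and equal to $1$ on $\widehat{MN}$ (estimate \eqref{main_10}); hence a $C^{2\bar\mu}$ function of $(z,t)$ pulls back to a $C^{\bar\mu}$ function of $(\xi,\eta)$. The bottleneck is $W\in C^{(2-\nu)/3}$ from Lemma \ref{Sol_3_l6}, which yields $(2-\nu)/6\to 1/3$; the exponent $1/3$ comes from this square-root loss in $t$, not from the characteristic-cone scaling $t_m\sim|z_2-z_1|^{1/3}$, which the paper only uses inside the $(z,t)$-plane to prove Lemmas \ref{Sol_3_l5}--\ref{Sol_3_l6}. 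The sonic curve is handled by showing $0<C_1\le\bar\omega_\xi^2+\bar\omega_\eta^2\le C_2$, so the level set $\{\bar\omega=1\}$ inherits the $C^{1,\mu}$ regularity of $\bar\omega$.

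The concrete errors in your version are in the asymptotics that your cancellation bookkeeping rests on. First, Corollary \ref{Sol_3_c1} does not give $X-Y=O(t)$. The quantity that vanishes linearly is $X+Y=2\cos\omega\, W=2tW$ (since $W$ is bounded by Lemma \ref{Sol_3_l4}), whereas $X-Y$ must remain bounded away from zero: it is exactly the factor in $J=-\tfrac{c(X-Y)}{2t}(1+\kappa(\tau)(1-t^2))$ and in \eqref{Inv1_8} whose nonvanishing makes the map $(z,t)\mapsto(\xi,\eta)$ injective. With $X-Y$ bounded below, your own formula gives $j\sim t^{3}$, not $t^{2}$. Second, the claimed orders of the inverse derivatives are off by one power of $t$: since $\hat z=\phi$, one has $\hat z_\xi=\phi_\xi=-U=-c\cos\theta/\bar\omega=O(1)$, and since $\hat t=\sqrt{1-\bar\omega^2}$ with $\bar\omega_\xi,\bar\omega_\eta$ bounded, $\hat t_\xi=-\bar\omega\bar\omega_\xi/t=O(t^{-1})$ — not $O(t^{-1})$ and $O(t^{-2})$ respectively as you assert. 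With the inputs wrong at this level, the "explicit cancellation of the form $t^{k}\cdot t^{-k}$" that your argument depends on is not established, and the $C^1$ step does not go through as described. If you want to salvage the chain-rule route you would have to redo the power counting with $X+Y=O(t)$, $X-Y\gtrsim 1$, $j\sim t^3$; but the cleaner fix is simply to use \eqref{main_8} and \eqref{Inv1_5}, which make the entire cancellation issue disappear.

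Your Hölder step and your treatment of the sonic curve are both plausible alternatives in spirit (the cone scaling and the square-root loss in $t$ both produce the limit exponent $1/3$), but the cone argument is only sketched, and the paper's composition lemma is both shorter and the actual source of the restriction $\mu<1/3$. I would recommend adopting the paper's reduction at step one and its composition lemma at step two, keeping your observation about $|\nabla\bar\omega|$ being nondegenerate (which the paper also uses) for the sonic curve.
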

\begin{proof}
   For the function $\theta(\xi,\eta)$, it follows from (\ref{Pre2_2}) and  (\ref{Pre2_11}) that
   \begin{equation}\label{main_8}
       \begin{split}
           \theta_\xi &= -\frac{\kappa(\tau)\cos\theta\sqrt{1-t^2}}{2}( X- Y)+\frac{\sin\theta}{\sqrt{1-t^2}}\left( \frac{1-t^2}{c}+\frac{1}{2}\kappa(\tau)\sqrt{1-t^2}\cos\omega( X+ Y)\right),\\
           \theta_\eta &=-\frac{\kappa(\tau)\sin\theta\sqrt{1-t^2}}{2}( X- Y)-\frac{\cos\theta}{\sqrt{1-t^2}}\left( \frac{1-t^2}{c}+\frac{1}{2}\kappa(\tau)\sqrt{1-t^2}\cos\omega( X+ Y)\right).
       \end{split}
   \end{equation}
   In view of (\ref{main_8}) and (\ref{Inv1_5}), it is sufficient to establish that the functions $( X, Y,W)(\xi,\eta)$ are uniformly $C^{\mu}$- continuous for any $\mu\in(0,1/3)$ throughout the region $MNO$. These regularity properties follow from Lemma \ref{Sol_3_l6} together with the following assertion. If a function $I(z,t)$ belongs to $C^{2\bar\mu}$ on the entire region $M'N'O'$, then the transformed function $\bar I(\xi,\eta):=I(\hat z(\xi,\eta),\hat t(\xi,\eta))$ is uniformly $C^{\bar\mu}$ continuous on $MNO$.

   To justify this claim, let $(\xi^*,\eta^{*})$ and $(\xi^{**},\eta^{**})$ be two arbitrary points in $MNO$ and let $(z^{*},t^{*})$ and $(z^{**},t^{**})$ denote their corresponding points in the $(z,t)$ plane. We then examine the differences
   \begin{equation}\label{main_9}
       |z^{*}-z^{**}|=|\phi(\xi^*,\eta^{*})-\phi(\xi^{**},\eta^{**})|\leq a_7(|\xi^{**}-\xi^{*}|+|\eta^{**}-\eta^{*}|),
   \end{equation}
   and
   \begin{equation}\label{main_10}
       \begin{split}
           |t^{*}-t^{**}|^2 &\leq |t^{*}-t^{**}||t^{*}+t^{**}|=|{t^{*}}^2-{t^{**}}^2|=|\bar{\omega}^2(\xi^{*},\eta^{*})-\bar{\omega}^2(\xi^{**},\eta^{**})|\\
           &\leq 2 |\bar{\omega}(\xi^{*},\eta^{*})-\bar{\omega}(\xi^{**},\eta^{**})|\leq a_8(|\xi^{**}-\xi^{*}|+|\eta^{**}-\eta^{*}|),
       \end{split}
   \end{equation}
   where
   \begin{equation*}\label{main_11}
      a_7=\max_{(\xi,\eta)\in MNO} \frac{c}{\bar\omega}(\xi,\eta), \; a_8=2\max\left\{\max_{(\xi,\eta)\in MNO} |\bar\omega_\xi|, \; \max_{(\xi,\eta)\in MNO} |\bar\omega_\eta|\right\}.
   \end{equation*}
   Here $a_7$ and $a_8$ denote two positive constants determined by (\ref{Inv1_5}) and Lemmas \ref{Sol_3_l1} and \ref{Sol_3_l4}. Combining these bounds with (\ref{main_9}) and (\ref{main_10}) and invoking the $C^{2\bar\mu}$ continuity, we obtain
   \begin{equation*}\label{main_12}
       \begin{split}
           |\tilde I(\xi^{*},\eta^{*})-\tilde I(\xi^{**},\eta^{**})|&=|I(z^{*},t^{*})-I(z^{**},t^{**})|\\
           &\leq a_9|(z^{*},t^{*})-(z^{**},t^{**})|^{2\bar\mu}\\
           &\leq a_9 \left(|z^{*}-z^{**}|^2+|t^{*}-t^{**}|^2 \right)^{\bar\mu}\\
           &\leq a_9|(\xi^{*},\eta^{*})-(\xi^{**},\eta^{**})|^{\bar\mu},
       \end{split}
   \end{equation*}
   for some uniform positive constant $a_9$, which implies that the function $\tilde I(\xi,\eta)$ is uniformly $C^{\bar\mu}$ continuous throughout the region $MNO$. By Lemma \ref{Sol_3_l6}, we further conclude that $( X(\xi,\eta),  Y(\xi,\eta))$ are uniformly $C^{(1-\nu)/2}$ continuous and that $W(\xi,\eta)$ is uniformly $C^{(2-\nu)/6}$ continuous on $MNO$ for any $\nu\in(0,1)$. Consequently, $( X(\xi,\eta),  Y(\xi,\eta), W(\xi,\eta))$ is uniformly $C^{\mu}$ continuous for all $\mu\in(0,1/3)$.

   Therefore, in view of (\ref{Inv1_5}) and (\ref{main_8}), the functions $\theta(\xi,\eta)$ and $\bar\omega(\xi,\eta)$ are uniformly $C^{1,\mu}$ continuous on the entire region $MNO$ for any $\mu\in(0,1/3)$. The uniform regularity of $c(\xi,\eta)$ follows directly from its representation given in (\ref{Pre1_8}).

   Finally, concerning the sonic curve $\widehat{MN}$, we observe by once again applying (\ref{Inv1_5}) that
   \begin{equation*}\label{main_13}
       \bar\omega_{\xi}^2+\bar\omega_{\eta}^2=\left(\sqrt{1-t^2}(1+\kappa(\tau)(1-t^2))W+\frac{1-t^2}{2} \right)^2+\frac{(1+\kappa(\tau)(1-t^2))^2}{4}( X- Y)^2,
   \end{equation*}
   which combined with Lemmas \ref{Sol_3_l1} and \ref{Sol_3_l4}, implies that
   \begin{equation*}\label{main_14}
       0<C_1\leq \bar\omega_{\xi}^2+\bar\omega_{\eta}^2\leq C_2,
   \end{equation*}
   for some uniform positive constants $C_{1}$ and $C_{2}$. Consequently, the curve $\widehat{MN}$ is $C^{1,\mu}$ continuous for any $\mu\in(0,1/3)$ and the proof of the lemma is complete.
\end{proof}
\begin{corollary}
    The function $\theta(\xi,\eta)$ has improved regularity. In particular, $\theta(\xi,\eta)$ is uniformly $C^{1,\bar\mu}$ continuous on the entire region $MNO$ for any $\bar\mu\in(0,1/2)$.
\end{corollary}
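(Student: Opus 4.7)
The plan is to revisit the explicit formulas (\ref{main_8}) for $\theta_\xi$ and $\theta_\eta$ and exploit a structural feature not used in Lemma \ref{Main_l1}: although $X+Y=2\cos\omega\,W$, the combination $X+Y$ itself inherits the sharper $C^{1-\nu}$ regularity of $X$ and $Y$ in the $(z,t)$ plane, which is strictly stronger than the $C^{(2-\nu)/3}$ regularity of $W$. Using $\sqrt{1-t^2}=\bar\omega$ and $\cos\omega=t$, one may rewrite
\begin{equation*}
\theta_\xi=-\frac{\kappa(\tau)\bar\omega\cos\theta}{2}(X-Y)+\frac{\kappa(\tau)\,t\sin\theta}{2}(X+Y)+\frac{\bar\omega\sin\theta}{c},
\end{equation*}
and analogously
\begin{equation*}
\theta_\eta=-\frac{\kappa(\tau)\bar\omega\sin\theta}{2}(X-Y)-\frac{\kappa(\tau)\,t\cos\theta}{2}(X+Y)-\frac{\bar\omega\cos\theta}{c}.
\end{equation*}
Both right-hand sides depend on $X$ and $Y$ only through the algebraic sums $X\pm Y$, with coefficients that are smooth functions of $(c,\theta,\bar\omega,t)$ and do not reference $W$ directly.

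With this representation the argument proceeds in three short steps. First, by Lemma \ref{Sol_3_l6}, $X,Y\in C^{1-\nu}(M'N'O')$ for every $\nu\in(0,1)$, so $X\pm Y\in C^{1-\nu}(M'N'O')$. Second, the pullback estimate established in the proof of Lemma \ref{Main_l1}, based on the inequality $|t^*-t^{**}|^2\le a_8\,|(\xi^*-\xi^{**},\eta^*-\eta^{**})|$ from (\ref{main_10}), turns each $C^{2\bar\mu}$ quantity in the $(z,t)$ plane into a $C^{\bar\mu}$ quantity on $\overline{MNO}$. Applying this with $2\bar\mu=1-\nu$ yields $X(\xi,\eta),Y(\xi,\eta)\in C^{(1-\nu)/2}(\overline{MNO})$, which exhausts every Hölder exponent strictly below $1/2$. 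Third, the coefficients $\cos\theta,\sin\theta,\bar\omega,c,\kappa(\tau)$ are all $C^{1,\mu}$ with $\mu<1/3$ by Lemma \ref{Main_l1}, while $t=\sqrt{1-\bar\omega^2}$ is uniformly $C^{1/2}$-Hölder on $\overline{MNO}$ because $|\nabla\bar\omega|$ is bounded away from zero at the sonic curve (as already used at the end of the proof of Lemma \ref{Main_l1}). Every such factor dominates the target exponent $\bar\mu<1/2$, so products with $X\pm Y$ preserve $C^{\bar\mu}$ regularity. Consequently $\theta_\xi,\theta_\eta\in C^{\bar\mu}(\overline{MNO})$ and $\theta\in C^{1,\bar\mu}(\overline{MNO})$ for every $\bar\mu\in(0,1/2)$.

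The one subtle point, and the only place where the present argument improves upon Lemma \ref{Main_l1}, is the refusal to factor $X+Y$ as $2\cos\omega\,W$. The latter reading would force the Hölder exponent of this combination down to $(2-\nu)/3$ in $(z,t)$ and hence to $(2-\nu)/6<1/3$ after pullback, recovering only the regularity already known. The same improvement is not available for $c$ or $\bar\omega$, whose derivatives in the $(\xi,\eta)$ plane genuinely involve $W$ through the Jacobian identities (\ref{main_1})--(\ref{Inv1_5}) and therefore remain capped at the $C^{1,1/3-}$ level.
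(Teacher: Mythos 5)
Your proposal is correct and coincides with the paper's (implicit) justification of this corollary: the formulas \eqref{main_8} show that $\theta_\xi,\theta_\eta$ involve $X,Y$ only through $X\pm Y$ (the factor $\cos\omega=t$ absorbing the division by $2\cos\omega$ that would otherwise produce $W$), so the Hölder exponent is governed by the $C^{(1-\nu)/2}$ pullback of $X,Y$ rather than the $C^{(2-\nu)/6}$ pullback of $W$, yielding $\theta\in C^{1,\bar\mu}$ for every $\bar\mu\in(0,1/2)$. Your remark that $t=\sqrt{1-\bar\omega^2}$ is only $C^{1/2}$ on $\overline{MNO}$ but still dominates the target exponent is the one point worth making explicit, and you make it correctly.
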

Along the curve $\widehat{NO}$, we have the following lemma
\begin{lemma}\label{Main_l2}
    The curve $\widehat{NO}$ specified in \eqref{main_4} constitutes a positive characteristic curve of the system (\ref{main_5}).
\end{lemma}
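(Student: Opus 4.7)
The plan is to show that the tangent to $\widehat{NO}$ at every point is parallel to the positive characteristic direction $(\cos\alpha,\sin\alpha)$ of the pseudo-steady Euler system in the self-similar plane. Since $\widehat{NO}$ is by construction the image under the inverse hodograph map of the positive-characteristic trajectory $z=z_+(t)$ in the $(z,t)$-plane, and since the partial hodograph transformation was built from the characteristic decomposition of the Euler system, the correspondence of characteristic directions is expected. The proof reduces to making this correspondence explicit.

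Concretely, I would parameterize $\widehat{NO}$ by $t\in[0,\bar\delta]$ as $t\mapsto(\xi(z_+(t),t),\eta(z_+(t),t))$ and differentiate in $t$ to obtain the tangent vector
\[
\Bigl(\xi_z\,z_+'(t)+\xi_t,\ \eta_z\,z_+'(t)+\eta_t\Bigr)=(\partial_+\xi,\partial_+\eta),
\]
where I use the ODE \eqref{main_5} for $z_+'$. The cleanest route to identify this with the positive characteristic direction is to take the normalized characteristic field $\bar\partial^+=\cos\alpha\,\partial_\xi+\sin\alpha\,\partial_\eta$ on $(\xi,\eta)$, rewrite it in hodograph coordinates via the chain rule using $z=\phi-\hat\phi(\xi_2)$ and $t=\cos\omega$, and compare with the expression for $\bar\partial^+$ in \eqref{Sol1_6}. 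This comparison yields an identity of vector fields of the form $\bar\partial^+=A(z,t)\,\partial_+$ for a nonvanishing scalar $A$, because both sides point along the same characteristic direction and the coefficient $B/A$ of $\partial_z$ in \eqref{Sol1_6} equals $\Lambda_+$ by construction. Applying both sides to the coordinate functions $\xi$ and $\eta$ gives $\partial_+\xi=\cos\alpha/A$ and $\partial_+\eta=\sin\alpha/A$, so the tangent vector to $\widehat{NO}$ is proportional to $(\cos\alpha,\sin\alpha)$, as required. A direct verification through \eqref{Inv1_4}-\eqref{Inv1_5} is also possible, but considerably more tedious, since it requires simplifying $\xi_z\Lambda_++\xi_t$ and $\eta_z\Lambda_++\eta_t$ using the explicit forms of $J,\bar\omega_\xi,\bar\omega_\eta,f,g$ together with $\theta,\omega$.

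The main technical point, rather than a genuine obstacle, is to verify that the proportionality factor $A$ is nonzero along $\widehat{NO}$ up to and including the endpoint $N$ on the sonic line $t=0$, so that the tangent direction is well defined throughout. Nonvanishing of $A$ follows from the uniform positivity of $X$ supplied by Lemma \ref{Sol_3_l1} together with the uniform bounds on $c$ from Corollary \ref{cbound}, while smoothness of the parameterization up to $t=0$ follows from the regularity of the inversion map established in Lemma \ref{Main_l1} and the solvability of the characteristic ODE \eqref{main_5} on $[0,\bar\delta]$ guaranteed by the global existence result of Theorem \ref{Sol3_th2}. Once these ingredients are in place, the identity $\partial_+\eta/\partial_+\xi=\tan(\theta+\omega)=\tan\alpha$ holds pointwise on $\widehat{NO}$, completing the proof that $\widehat{NO}$ is a positive characteristic curve of the Euler system in the self-similar plane.
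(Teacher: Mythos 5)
Your proposal is correct and is essentially the paper's argument in different packaging: the paper differentiates the implicit relation $\hat z(\xi,\eta)=z_+(\hat t(\xi,\eta))$ and uses the inverse-Jacobian formulas \eqref{main_1} to arrive at $\frac{d\eta}{d\xi}=\frac{\eta_t+\Lambda_+\eta_z}{\xi_t+\Lambda_+\xi_z}=\tan\alpha=\lambda_+$, which is exactly your identity $(\partial_+\xi,\partial_+\eta)\parallel(\cos\alpha,\sin\alpha)$ obtained from $\bar\partial^+=A\,\partial_+$ read off \eqref{Sol1_6}. Your additional remarks on the nonvanishing of $A$ (via Lemma \ref{Sol_3_l1} and Corollary \ref{cbound}) and on regularity up to $t=0$ are sound and, if anything, slightly more careful than the paper's proof.
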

\begin{proof}
    Let $\hat z(\xi,\eta)=z^{+}(\hat t(\xi,\eta); \bar{\delta}, \tilde z(\bar{\delta}))$. Differentiating it with respect to its first argument and applying (\ref{main_5}), we obtain
    \begin{equation*}\label{main_15}
        \hat z_\xi+\hat z_\eta \frac{d\eta}{d\xi}=\frac{cf\hat{t}^2 X}{ X+\hat{t}g}\left( \hat t_\xi+\hat t_\eta \frac{d\eta}{d\xi}\right).
    \end{equation*}
    Utilizing (\ref{main_1}) we have
    \begin{equation*}\label{main_16}
        \left(\xi_t+\frac{cft^2 X}{ X+tg}\xi_z\right) \frac{d\eta}{d\xi}=\eta_t+\frac{cf\hat{t}^2 X}{ X+\hat{t}g}\eta_z.
    \end{equation*}
    Further (\ref{Inv1_4}), (\ref{main_7}) and (\ref{Sol1_10}) yields
    \begin{equation*}\label{main_17}
        \frac{d\eta}{d\xi}=\tan\alpha =\lambda_+,
    \end{equation*}
    which implies that the curve $\widehat{NO}$ is a positive characteristic curve of the system (\ref{Pre2_3}). This completes the proof of the lemma.
\end{proof}
\begin{lemma}\label{Main_l3}
    The functions $(c(\xi,\eta),\theta(\xi,\eta),\bar\omega(\xi,\eta))$ defined in (\ref{main_2}) satisfy the system (\ref{Pre2_3}).
\end{lemma}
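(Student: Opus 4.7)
The strategy is to exploit the fact that the inverse partial hodograph map has been set up precisely so that the characteristic operators $\bar\partial^\pm$ in the $(\xi,\eta)$-plane pull back, under the change of variables, to nonvanishing scalar multiples of the operators $\partial_\pm=\partial_t+\Lambda_\pm\partial_z$ in the hodograph plane. Once this correspondence is made explicit, the system (\ref{Pre2_3}) reduces to identities already encoded in the construction of the solution carried out in Section 3.

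First, I would verify Bernoulli's relation (\ref{Pre1_3}) for the recovered triple $(c,\theta,\bar\omega)(\xi,\eta)$. Since $z=\phi-\hat\phi(\xi_2)$ by (\ref{Sol1_1}) and $c(z,t)$ is defined through (\ref{Sol1_7}) exactly as the Bernoulli branch associated with the chosen specific volume, this identity is transferred intact through the inverse map. As a consequence, the algebraic block of Section 2.3 -- the equivalences tying (\ref{Pre2_3}), (\ref{Pre2_11}) and the normalized characteristic derivatives (\ref{Pre2_5}) together -- becomes available as soon as the hodograph quantities $X,Y$ solving (\ref{Sol1_8}) can be identified with the pullbacks of $\bar\partial^+c/c$ and $\bar\partial^-c/c$, respectively.

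Second, I would carry out this identification. Applying the chain rule through the inverse map and using (\ref{main_1}), (\ref{Inv1_4}) and (\ref{Inv1_5}), one writes $\bar\partial^\pm$ in the $(z,t)$-plane as explicit nonvanishing scalar multiples of $\partial_\pm$; the corresponding scalar factors are bounded away from zero on $MNO$ thanks to Corollary \ref{cbound} and Lemmas \ref{Sol_3_l1}, \ref{Sol_3_l4}. Substituting the closed-form expressions of $c_t,c_z$ from (\ref{Sol1_7}) and the formulas for $\Lambda_\pm$, $f$ and $g$ in (\ref{Sol1_10}) and cancelling yields the clean identities $\bar\partial^+c=cX$ and $\bar\partial^-c=cY$ on $MNO$. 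The minus equation of (\ref{Pre2_3}) is then immediate, because $\theta(z,t)$ was built precisely as the solution of the ODE (\ref{Inv1_1}) along negative characteristics, which is the pullback of the second equation of (\ref{Pre2_3}) under the correspondence just established. The plus equation follows algebraically: with $X=\bar\partial^+c/c$ now available, Bernoulli's law forces $\bar\partial^+\theta=-\kappa\cos\omega\bar\omega X-\bar\omega^2/c$, and this is equivalent to the first equation of (\ref{Pre2_3}) by the routine manipulation already performed in Section 2.3.

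The main technical obstacle is the bookkeeping in the identification step. One has to verify that the proportionality scalars arising from the change of variables combine correctly with the explicit formulas for $c_t,c_z$ and with the hodograph characteristic slopes $\Lambda_\pm$, so that the $X,Y$ produced by solving (\ref{Sol1_8}) truly coincide with the normalized characteristic derivatives of the recovered sound speed in the $(\xi,\eta)$-plane. This computation, while routine, is lengthy and requires careful tracking of signs; uniformity all the way up to the sonic boundary is ensured by Lemmas \ref{Sol_3_l1} and \ref{Sol_3_l4}, which keep all denominators encountered (in the proportionality factors, in $\Lambda_\pm$, and in $X-tg$, $Y-tg$) uniformly bounded away from zero.
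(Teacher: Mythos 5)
Your setup --- pulling $\bar\partial^\pm$ back to scalar multiples of $\partial_\pm$, checking $\bar\partial^+c=cX$ and $\bar\partial^-c=cY$ from the formulas for $c_t,c_z$ in (\ref{Sol1_7}), and reading off the minus equation from the ODE (\ref{Inv1_1}) that defines $\theta$ --- is consistent with the paper's construction, and the identification in your second step does come out cleanly. The gap is in your treatment of the plus equation. You write that ``Bernoulli's law forces $\bar\partial^+\theta=-\kappa\cos\omega\,\bar\omega X-\bar\omega^2/c$.'' It does not. Differentiating the pseudo-Bernoulli relation (\ref{Pre1_3}) along $\bar\partial^+$ yields (\ref{Pre2_4}) and (\ref{Pre2_6}), i.e.\ a relation between $\bar\partial^+\bar\omega$ and $X=\bar\partial^+c/c$ only; $\theta$ does not appear in Bernoulli's law, so differentiating it cannot constrain $\bar\partial^+\theta$. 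The identity you invoke is, after substituting (\ref{Pre2_6}), exactly equivalent to the first equation of (\ref{Pre2_3}) --- it is the statement to be proved, not a tool. Since $\theta(z,t)$ is constructed in Section 4.1 by integrating only its \emph{minus}-characteristic derivative (\ref{Inv1_1}) from data on $\widehat{M'O'}$, its plus-characteristic derivative is not prescribed by the construction and must be computed; that computation is the entire content of the verification of the first equation.

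The paper does this by an explicit calculation: it forms $\bar\partial^+\theta=\cos\alpha\,\theta_\xi+\sin\alpha\,\theta_\eta$ and $\bar\partial^+\bar\omega=\cos\alpha\,\bar\omega_\xi+\sin\alpha\,\bar\omega_\eta$ from the full gradient formulas (\ref{main_8}) and (\ref{Inv1_5}) --- which carry the transversal (cross-characteristic) information through the combinations $X-Y$ and $W$ --- simplifies with the angle-sum identities for $\alpha=\theta+\omega$, and checks that $\bar\partial^+\theta+\frac{\kappa\cos\omega}{1+\kappa\bar\omega^2}\,\bar\partial^+\bar\omega$ collapses to the right-hand side of (\ref{Pre2_3}). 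To repair your argument you would need either to reproduce this computation, or to show directly --- for instance by differentiating the integral representation of $\theta$ across the minus characteristics, using the $\partial_+Y$ equation of (\ref{Sol1_8}) together with the boundary compatibility (\ref{Pre3_4}) --- that the transversal derivative of the constructed $\theta$ has the claimed value. As written, the plus half of the lemma is asserted rather than proved.
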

\begin{proof}
    We verify only the first equation in system (\ref{Pre2_3}), since the second equation can be established in an analogous manner. By direct application of (\ref{Pre2_1}), (\ref{Inv1_5}), (\ref{main_7}) and (\ref{main_8}), it follows that
    \begin{equation}\label{main_18}
        \begin{split}
            \bar\partial^+ \theta =&\cos\alpha \theta_\xi+\sin\alpha\theta_\eta\\
            =&\cos\alpha\left( -\frac{\kappa(\tau)\cos\theta\sqrt{1-t^2}}{2}( X- Y)+\frac{\sin\theta}{\sqrt{1-t^2}}\left( \frac{1-t^2}{c}+\frac{1}{2}\kappa(\tau)\sqrt{1-t^2}\cos\omega( X+ Y)\right)\right)\\
            & +\sin\alpha \left( -\frac{\kappa(\tau)\sin\theta\sqrt{1-t^2}}{2}( X- Y)-\frac{\cos\theta}{\sqrt{1-t^2}}\left( \frac{1-t^2}{c}+\frac{1}{2}\kappa(\tau)\sqrt{1-t^2}\cos\omega( X+ Y)\right)\right)\\
            =& -\frac{\kappa(\tau)\sqrt{1-t^2}}{2}( X- Y)\left(\cos\theta\cos(\theta+\omega)+\sin\theta\sin(\theta+\omega) \right)+\frac{\sqrt{1-t^2}}{c}\left(\sin\theta\cos(\theta+\omega)-\cos\theta\sin(\theta+\omega) \right) \\
            &+ \frac{1}{2}\kappa(\tau)\cos\omega( X+ Y) \left(\sin\theta\cos(\theta+\omega)-\cos\theta \sin(\theta +\omega) \right)\\
            =&-\frac{\kappa(\tau)\sqrt{1-t^2}}{2}( X- Y)\cos\omega -\frac{1-t^2}{c}-\frac{1}{2}\kappa(\tau)\cos\omega( X+ Y)\sqrt{1-t^2}\\
            =&- \kappa(\tau)\bar\omega\cos\omega X-\frac{\bar\omega^2}{c}.
        \end{split}
    \end{equation}
    Similarly we evaluate
    \begin{equation}\label{main_19}
        \begin{split}
           \bar \partial^+\bar\omega =&\cos\alpha \bar\omega_\xi+\sin\alpha\bar\omega_\eta\\
           =& \cos\alpha\left(\cos\theta\sqrt{1-t^2} (1+\kappa(\tau)(1-t^2)){W}+\frac{\cos\theta(1-t^2)}{c}-\sin\theta\frac{(1+\kappa(\tau)(1-t^2))}{2}(  X- Y) \right)\\ 
           &+\sin\alpha \left(\sin\theta\sqrt{1-t^2}(1+\kappa(\tau)(1-t^2)){W}+\frac{\sin\theta(1-t^2)}{c}+\cos\theta\frac{(1+\kappa(\tau)(1-t^2))}{2}(  X- Y) \right)\\
           =& \sqrt{1-t^2} (1+\kappa(\tau)(1-t^2)){W}\cos(\alpha-\theta)+\frac{1-t^2}{c}\cos(\alpha-\theta)-\frac{(1+\kappa(\tau)(1-t^2))}{2}(  X- Y)\sin(\theta-\alpha)\\
           =&\sqrt{1-t^2} (1+\kappa(\tau)(1-t^2))\frac{ X+ Y}{2}\cos\omega+\frac{1-t^2}{c}\cos\omega+\frac{(1+\kappa(\tau)(1-t^2))}{2}(  X- Y)\sqrt{1-t^2}\\
           =& \cos\omega\bar\omega (1+\kappa(\tau)(1-t^2)) X+\frac{\bar\omega^2}{c}\cos\omega.
        \end{split}
    \end{equation}
    Applying (\ref{main_18}) and (\ref{main_19}), we compute
    \begin{equation*}\label{main_20}
        \begin{split}
            \bar\partial^+ \theta + \frac{\kappa(\tau)\cos\omega}{1+\kappa(\tau)\bar\omega^2}\bar \partial^+\bar\omega =& - \kappa(\tau)\bar\omega\cos\omega X-\frac{\bar\omega^2}{c} + \frac{\kappa(\tau)\cos\omega}{1+\kappa(\tau)\bar\omega^2}\left( \cos\omega\bar\omega (1+\kappa(\tau)(1-t^2)) X+\frac{\bar\omega^2}{c}\cos\omega\right)\\
            =& - \kappa(\tau)\bar\omega\cos\omega X-\frac{\bar\omega^2}{c} + \kappa(\tau)\cos^2\omega\bar\omega X+\frac{\bar\omega^2}{c}\cdot \frac{\kappa(\tau)\cos^2\omega}{1+\kappa(\tau)\bar\omega^2}\\
            =& \frac{\bar\omega^2}{c}\cdot \frac{\kappa(\tau)-1-2\kappa(\tau)\bar\omega^2}{1+\kappa(\tau)\bar\omega^2}.
        \end{split}
    \end{equation*}
    This yields the first equation in system (\ref{Pre2_3}), thereby completing the proof of the lemma.
\end{proof}
Combining Lemmas \ref{Main_l1}-\ref{Main_l3}, we complete proof of the Theorem \ref{Pre_th1}. Using (\ref{main_2}) and (\ref{Pre1_8}), we now define functions $(c,u,v)(\xi,\eta)$ as follows:
\begin{equation*}\label{main_21}
    \begin{split}
        u(\xi,\eta)=\xi+\frac{c(\xi,\eta)\cos\theta(\xi,\eta)}{\bar\omega(\xi,\eta)},\; v(\xi,\eta)=\eta+\frac{c(\xi,\eta)\sin\theta(\xi,\eta)}{\bar\omega(\xi,\eta)},\; c(\xi,\eta)=\frac{K(\gamma+1)\tau^2(\xi,\eta)}{(\tau(\xi,\eta)-b)^{\gamma+2}}-\frac{2a}{b}.
    \end{split}
\end{equation*}
It follows directly from Lemma \ref{Main_l1} that the functions $(\rho,u,v)(\xi,\eta)$ are uniformly $C^{1,\mu}$ continuous on the entire region $MNO$ for any $\mu\in(0,1/3)$. Moreover, we may verify that the functions $(\rho,u,v)(\xi,\eta)$ defined above satisfy the two-dimensional isentropic pseudo steady Euler equations (\ref{Pre1_1}). 
\section{Conclusions}

In this work, we investigated the two-dimensional four-state Riemann problem for the isentropic compressible Euler equations with a polytropic van der Waals equation of state. We identified conditions under which a supersonic–sonic patch develops along a pseudo-streamline and analyzed the local structure of the flow near the sonic transition. By introducing self-similar variables, the governing system was shown to exhibit a mixed-type character with degeneracy arising on the sonic boundary. To overcome this difficulty we employed a characteristic decomposition combined with a partial hodograph transformation which enabled the construction of a smooth supersonic solution extending up to the sonic curve. Furthermore, we established uniform regularity estimates for both the solution and the resulting sonic boundary. 
Moreover, we prove that the sonic curve is uniformly $C^{1,\mu}$-continuous and the supersonic solution is uniformly $C^{1,\mu}$-continuous up to the sonic boundary for any $\mu\in(0,\frac{1}{3})$. These results demonstrate that the van der Waals equation of state can be incorporated naturally into the existing analytical framework for supersonic–sonic patch problems, thereby extending the theory from idealized polytropic gases to a more realistic class of non-ideal fluid models. The approach developed here can be adapted to study supersonic–sonic structures arising in general equation of state and related multidimensional flow configurations such as magnetohydrodynamics, relativistic Euler equations and etc.

\section*{Acknowledgements}
\textit{The first author acknowledges the financial support provided by the Indian Institute of Technology Kharagpur. The second author (TRS) expresses his gratitude towards ANRF, DST, Government of India (Ref. No. CRG/2022/006297) for its financial support through the core research grant.}

\end{document}